\title[A topological theory of unoriented \texorpdfstring{$SL(4)$}{SL(4)} foams]{A topological theory for unoriented \texorpdfstring{$SL(4)$}{SL(4)} foams}
\author[M. Khovanov]{Mikhail Khovanov}
\address{John Hopkins University, US \& Columbia University, US}
\email{\href{mailto:khovanov@math.columbia.edu}{khovanov@math.columbia.edu}}
\author[J. H. Przytycki]{J\'ozef H. Przytycki}
\address{George Washington University, US \& University of Gdańsk, Poland}
\email{\href{mailto:przytyck@gwu.edu}{przytyck@gwu.edu}}
\author[L.-H. Robert]{Louis-Hadrien Robert}
\address{Université Clermont Auvergne, France}
\email{\href{mailto:louis\_hadrien.robert@uca.fr}{louis\_hadrien.robert@uca.fr}}
\author[M. Silvero]{Marithania Silvero}
\address{Universidad de Sevilla, Spain}
\email{\href{mailto:marithania@us.es}{marithania@us.es}}
    \subjclass[2020]{05C15, 05A30, 57M15, 57K16, 18N25} 
\tikzset{->-/.style={decoration={markings,
  mark=at position .5 with {\arrow{>}}},postaction={decorate}}}
\tikzset{-<-/.style={decoration={markings, mark=at position .5 with {\arrow{<}}},postaction={decorate}}}
\let\oldtocsection=\tocsection
\let\oldtocsubsection=\tocsubsection
\renewcommand{\tocsection}[2]{\hspace{0em}\oldtocsection{#1}{#2}}
\renewcommand{\tocsubsection}[2]{\hspace{1em}\oldtocsubsection{#1}{#2}}
\newcounter{res}[section]
\numberwithin{res}{section}
\newtheorem{theorem}[res]{Theorem}
\newtheorem{lemma}[res]{Lemma}
\newtheorem{proposition}[res]{Proposition}
\newtheorem{question}[res]{Question}
\theoremstyle{definition}
\newtheorem{definition}[res]{Definition}
\newtheorem{remark}[res]{Remark}
\newtheorem{example}[res]{Example}
\newcommand{\R}{\ensuremath{\mathbb{R}}}
\newcommand{\Z}{\ensuremath{\mathbb{Z}}}
\newcommand{\N}{\ensuremath{\mathbb{N}}}
\newcommand{\C}{\ensuremath{\mathbb{C}}}
\newcommand{\ftwo}{\ensuremath{\mathbb{F}_2}}
\renewcommand{\SS}{\ensuremath{\mathbb{S}}}
\newcommand{\tc}{t_4}
\newcommand{\ncol}{\tc}
\newcommand{\defect}{\ensuremath{\operatorname{dft}}}
\newcommand{\seam}{\ensuremath{s}}
\newcommand{\thickness}{\ensuremath{\operatorname{th}}}
\newcommand{\rank}{\ensuremath{\operatorname{rk}}}
\newcommand{\colorings}{\ensuremath{\operatorname{col}}}
\newcommand{\pigments}{\ensuremath{\mathbb{P}}}
\newcommand{\ourring}{\ensuremath{R}}
\newcommand{\localring}{\ourring_{\discriminant}}
\newcommand{\discriminant}{\delta}
\newcommand{\basering}{\ensuremath{\ftwo[X_1, X_2, X_3, X_4]^{S_4}}}
\newcommand{\YD}{\operatorname{{YD}}}
\newcommand{\eval}[1][\relax]{\ifx#1\relax \ensuremath{\tau} \else \ensuremath{\tau\left(#1\right)} \fi}
\newcommand{\functor}[1][\relax]{\ifx#1\relax \ensuremath{\tau} \else \ensuremath{\tau\left(#1\right)} \fi}
\newcommand{\functord}[1][\relax]{\ifx#1\relax \ensuremath{\tau_{\delta}} \else \ensuremath{\tau_{\delta}\left(#1\right)} \fi}
\newcommand{\forget}[1][\relax]{\ifx#1\relax \ensuremath{\mathtt{f}} \else \ensuremath{\mathtt{f}\left(#1\right)} \fi}
\newcommand{\myvertextwo}{\NB{\tikz{\filldraw[color=red,fill=white](0,0) circle (0.5mm);}}}
\newcommand{\myvertex}{\NB{\tikz{\filldraw[fill=white](0,0) circle (0.5mm);}}}
\newcommand{\kempesquare}{\ensuremath{\diamond}}
\newcommand{\kempetriangle}{\ensuremath{\triangle}}
\newcommand{\imagesfolder}{.}
\newcommand{\NB}[1]{\ensuremath{\vcenter{\hbox{#1}}}}
\newcommand{\catFoam}{\ensuremath{\mathsf{Foam}}}
\tikzset{set style/.style={#1},
  expand style/.style={set style/.expanded={#1}},
  expand style once/.style={set style/.expand once={#1}},
  expand style twice/.style={set style/.expand twice={#1}}
}
 \tikzset{zxplane/.style={canvas is zx plane at y=#1}}
 \tikzset{yxplane/.style={canvas is yx plane at z=#1}}
 \tikzset{zyplane/.style={canvas is zy plane at x=#1}}
\definecolor{colork}{RGB}{255,127,127}
\definecolor{colorj}{RGB}{30,30,255}
\definecolor{colorl}{RGB}{0,127,0}
\definecolor{colori}{RGB}{255,159,31}
\newcommand{\stylecoli}{colori, thick, decorate, decoration={snake, segment length=1mm,amplitude=0.2mm}}
\newcommand{\stylecolj}{colorj,very thick, densely dashed}
\newcommand{\stylecolk}{colork, decorate, thin, decoration={coil,
    segment length=0.5mm,amplitude=0.3mm}}
\newcommand{\stylecoll}{colorl, decorate, thin, decoration={crosses,segment
    length=1mm, shape width= 0.8mm, shape height=0.8mm}}
\newcommand{\colori}{{\color{colori}1}}
\let\emptyset\varnothing
\newcommand{\lra}{\longrightarrow}
\newcommand{\mcP}{\mathcal{P}}
\newcommand{\regions}{r} 
\newcommand{\id}{\mathsf{id}}
\newcommand{\Webs}{\mathsf{Webs}}
\newcommand{\kk}{\mathbf{k}}
\begin{document}
\begin{abstract} Unoriented SL(3) foams are two-dimensional CW complexes with generic singularities embedded in  3- and 4-manifolds. They naturally come up in the Kronheimer--Mrowka SO(3) gauge theory for 3-orbifolds and, in the  oriented case, in a categorification of the Kuperberg bracket quantum invariant.  The present paper studies the  more technically complicated case of SL(4) foams. Combinatorial evaluation of unoriented SL(4) foams is defined and state spaces for it are studied. In particular, over a suitably localized ground ring, the state space of any web is free of the rank given by the number of its 4-colorings.  
\end{abstract}

\date{July 2, 2023}

\maketitle
\tableofcontents

\section{Introduction}
\label{sec_intro}
Foams, as they appear in link homology, are suitably decorated
combinatorial two-di\-men\-sion\-al CW-complexes embedded in
$\R^3$. Combinatorial approach to $GL(N)$ link homology, see \cite{Kho04,
  MacVaz, MacStoVaz, QueRos, RobWag} and other papers, is based on a
special evaluation of closed foams (foams without boundary) to
elements of rings of symmetric functions $R_N$ in $N$ variables. One
then uses so-called \emph{universal construction} of topological
theories \cite{BHMV, KhovUniversal} to build a lax TQFT from foam
evaluation. This lax TQFT (or a topological theory) defines state
spaces of plane graphs, which appear as generic plane cross-sections
of foams. Given a plane diagram of a link, one ``resolves'' it into a
collection of plane graphs and assigns a complex of state spaces, one
space for each graph in the resolution, with differentials coming from
maps induced by cobordisms in the topological theory. Homology of this
complex categorifies the Reshetikhin--Turaev--Witten quantum link
invariant with components colored by fundamental representations of
quantum $GL(N)$. There are now many categorical approaches to this link homology; for instance, one of the earliest ones is based on matrix factorizations \cite{KhoRozI, Yonezawa, Wu}. 

Other approaches are based on representation theory
\cite{BernsteinFrenkelKhovanov, SussanThesis, Webster, StroppelSussan}, coherent sheaves on quiver varieties \cite{CautisKamnitzer, AnnoNandakumar}, Fukaya--Floer categories on quiver varieties \cite{SeidelSmith, Manolescu, AbouzaidSmith}, and mathematical physics \cite{GukovScwarzVafa, Witten, Aganagic}. Foam approach is distinguished by being the most combinatorial, with relevant categories and functors that control the theory appearing much later in the construction. 

Spines are two-dimensional CW-complexes with generic singularities, strongly reminiscent of foams. They naturally come up in the simple homotopy theory of 2-dimensional CW-complexes and in 3-manifold theory (as 2-skeleta of the Poincaré dual of a triangulation of a 3-manifold). 

A variation on $GL(N)$-foams and their evaluations emerges in gauge
theory, via Kron\-hei\-mer--Mrow\-ka homology theory for 3-orbifolds
\cite{KM3, KM2, KM1, KhovRob1}. The corresponding
foams are sometimes called \emph{unoriented $SL(3)$ foams}
\cite{KhovRob1, Boozer1,KhovRob2, Boozer2}. 
Kron\-heim\-er--Mrow\-ka theory and unoriented $SL(3)$ foams have a close
connection to the Four-Color Theorem, with the potential to give a
conceptual proof of the latter and rethink it via gauge theory and 3-
and 4-dimensional topology. Unoriented $SL(3)$ foams and webs are
substantially more complicated than their oriented counterparts, and
beyond certain reducible cases state spaces of unoriented webs have
not been determined.

The present paper studies unoriented $SL(4)$ foams and webs. 
Unoriented $SL(4)$ webs relate to 4-regular planar graphs and their
four colorings. Colorability of 4-regular planar graph is
well-understood \cite{Guenin, CEKS}. However the number of
such colorings does not seem to have easy reduction local formula for
small facets. For instance, there is no simplification for
\[
\NB{\tikz[]{\begin{scope}[scale=1.2]
  \draw (0,0) .. controls +(0.5,0.5) and +(-0.5, 0.5) .. (1,0);
  \draw (0,0.5) .. controls +(0.5,-0.5) and +(-0.5, -0.5) .. (1,0.5); 
\end{scope}}} \qquad \text{or} \qquad \NB{\tikz[]{\begin{scope}[scale=1.2]
  \draw (40 :0.5) -- (200:0.5);
  \draw (80 :0.5) -- (280:0.5);
  \draw (160 :0.5) -- (320:0.5);
\end{scope}}}
\]
Hence it makes sense to change the setup a little bit and to replace
a 4 valent-vertex:  
\[
  \NB{\tikz[]{\begin{scope}[scale=1.2]
  \draw (45:0.5) -- (225:0.5);
  \draw (135:0.5) -- (315:0.5);
  %\node[right] at (0,0) {$\circ$};
\end{scope}}} \ \ \ \rightsquigarrow \ \ \ \NB{\tikz[scale=1.2]{\input{\imagesfolder/4v_dumbell-bullet}}} = \NB{\tikz[scale=1.2]{\input{\imagesfolder/4v_dumbell-bullet-intro}}}
\]
imposing a flow-like condition on trivalent vertices and a
complementary condition on bivalent vertices. This is precisely our
definition of colorings of unoriented $SL(4)$ webs, see
Definitions~\ref{def:web} and \ref{def:weboloring}.

We introduce unoriented $SL(4)$ webs (just called \emph{webs}) and
unoriented $SL(4)$ foams as cobordisms between webs. Fundamental
$SL(4)$ representation $V$ generates nontrivial exterior powers
$\Lambda^2 V$ and $\Lambda^3 V$, which can translate into foams having
corresponding facets of \emph{thickness} two and three. To make the presentation more compact, we do not consider facets of thickness three, which can be avoided. Facets corresponding to $V$ are called \emph{thin} and those corresponding to $\Lambda^2 V$ are called \emph{thick}.   

The basic evaluation function on closed unoriented $SL(4)$ webs counts the number of Tait colorings $t_4(\Gamma)$ of edges of a web $\Gamma$. We write down some skein relations for that function in Section~\ref{sec_webs}

In Section~\ref{sec:foams-and-eval} we define (unoriented $SL(4)$) foams $F$ and their evaluations $\functor(F)$ to elements of the ring 
$R$ of symmetric functions in four variables over the 2-element field $\mathbb{F}_2$. Characteristic two reduction is forced upon us by lack of facet orientations of our foams, just as in the $SL(3)$ case~\cite{KhovRob1}. The notion of coloring of a foam is introduced, with thin (respectively thick) facets labelled by a single pigment  (respectively a pair of pigments) from the set $\pigments=\{1,2,3,4 \}$of four pigments, subject to compatibility conditions at seams and vertices of the foam. Additionally, foams can carry defect lines at thick facets, and a coloring needs to be reversed at a defect line. In Section~\ref{sec:neck-cutt-relat} we write down a number of skein relations on foams. 

In Section~\ref{sec_state_sp} state spaces of webs are defined via the universal construction. The state space $\functor(\Gamma)$ of a web $\Gamma$ is a graded $R$-module generated by all foams $F$ with boundary $\Gamma$ modulo relations obtained by closing the foams in all possible ways and evaluating closing foams. This approach to state spaces is known as the \emph{universal construction to topological theories}, see earlier references. It results in a lax monoidal functor from the category of foams to the category of graded $R$-modules, see Theorem~\ref{thm_functor}. 

Skein relations from Section~\ref{sec:neck-cutt-relat} give reduction (or direct sum decomposition) formulas for state spaces of webs that contains a region with at most three sides or a region with four sides subject to some constraints, as explained in Theorem~\ref{thm_dir_sum_rels}. We then introduce the notion of \emph{reducible} web and show that its state space $\functor(\Gamma)$ is a free graded $R$-module of rank equal to the number of Tait colorings of $\Gamma$. The graded rank of the state space of a reducible web can be computed as well, giving a quantum-flavor invariant of such webs. For non-reducible webs this invariant can be defined as well, but at present we lack the tools to determine it (and to compute the state space of any non-reducible web). An example of a non-reducible web is given in that section.

Foam evaluation and state spaces are defined over the ring $R$ of symmetric polynomials in four variables $X_1,X_2,X_3,X_4$ with coefficients in $\mathbb{F}_2$. This ring has elementary symmetric polynomials as generators, $R\cong \mathbb{F}_2[E_1,\dots, E_4].$ Consider the discriminant $\delta^2$ of the polynomial 
\[
X^4+E_1X^3+E_2 X^2+E_3 X + E_4 = \prod_{i=1}^4 (X+X_i). 
\]
One can pass to the larger ring $R_{\delta}$ where $\delta^2$
(equivalently, $\delta$), is invertible. Inverting $\delta$ takes most
of complexity out of the state spaces associated with webs
$\Gamma$. In particular, over $R_{\delta}$, we are able to lift
Tutte-like relations \eqref{eq_tutte_1}--\eqref{eq_tutte_3} on the
$t_4$ invariant of various webs in Proposition~\ref{prop_tutte} to
direct sum decompositions of state spaces, see
Proposition~\ref{prop_functd}. This implies that $R_{\delta}$ state
spaces $\functor_{\delta}(\Gamma)$ are free $R_{\delta}$-modules of
rank $t_4(\Gamma)$. However, information on the $\Z$-grading of state
spaces over $R$ is lost when passing to $R_{\delta}$, due to $\delta$
as well as its factors $X_i+X_j$ being invertible and inducing
isomorphisms of homogeneous terms of $\functor_{\delta}(\Gamma)$.

\vspace{0.07in} 

$SL(3)$ and $SL(4)$ unoriented foam theories are similar and share the same foundational unsolved problems, including understanding state spaces of nonreducible planar webs. 
Some open questions are listed next:  
\begin{itemize}
    \item Develop techniques to compute state spaces of non-reducible unoriented $SL(3)$ and $SL(4)$ webs and understand the corresponding quantum invariant of these webs.  
    \item Is there a connection between  unoriented $SL(4)$ webs and gauge theory similar to the one in~\cite{KM1,KhovRob1} for unoriented $SL(3)$ webs and gauge theory? 
    \item Is there a relation between evaluations of unoriented $SL(4)$ foams with thick facets only and $SL(3)$ foams coming from the surjection of symmetric groups $S_4\lra S_3$ given by the permutation action of $S_4$ on the pairs of pigments in $\{1,2,3,4\}$ with opposite pairs identified?  
\end{itemize}

\vspace{0.07in} 

{\bf Acknowledgments:} A significant part of this work was done in
June 2023 at the \emph{Research in Pairs} program at the Oberwolfach Mathematical Institute (MFO). The authors are very grateful to the MFO for this opportunity and would like to acknowledge their indispensable support in producing the present
paper. M.~K.{} was partially supported by NSF grant DMS-2204033 and Simons Collaboration Award 994328.  J.H.~P.{} was partially supported by Simons Collaboration Award 637794.  L.-H.~R.{} thanks the LMBP and in particular Sylvie Chassagne and Valérie Sourlier for their logistical support. M.~S.{} was partially supported by Spanish Research Project PID2020-117971GB-C21, by IJC2019-040519-I, funded by MCIN/AEI/10.13039/501100011033 and by SI3/PJI/2021-00505 (Comunidad de Madrid).

\section{Webs and 4-colorings}
\label{sec_webs}

To work with the category of (unoriented $SL(4)$) foams, we start by considering (unoriented $SL(4)$) webs, which are suitable decorated planar graphs.  

\begin{definition}\label{def:web}
    A \emph{web} is an unoriented plane graph $\Gamma$ with a label in $\{1,2\}$, called \emph{thickness}, associated to each edge. We write $E(\Gamma)$ for the set of edges of $\Gamma$, and say that an edge labelled $i\in\{1,2\}$ is an \emph{$i$-edge}. Vertices of $\Gamma$ can be of three types:
    \begin{itemize}
    \item[(a)] A trivalent vertex adjacent to three $2$-edges (a $222$-vertex).\item[(b)] A trivalent vertex adjacent to a $2$-edge and two
      $1$-edges (a $112$-vertex).\item[(c)] A bivalent vertex, that we call  \emph{defect}, adjacent to two $2$-edges. 
    \end{itemize}
\end{definition}

\begin{figure}
    \centering
\NB{\tikz[]{\input{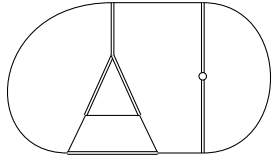}}}
\caption{\small{A web with one $222$-vertex, seven $112$-vertices and a defect vertex.}} 
\label{fig:wedge}
\end{figure}

\begin{definition} \label{def:weboloring}
Define a \emph{pigment} to be an element of the set $\pigments=\{1,2,3,4\}$. 
We call $\pigments$ the set of pigments and define $\mcP(\pigments)$ to be the set of all subsets of $\pigments.$

A \emph{coloring} of a web $\Gamma$ is a map
\[ c: E(\Gamma) \to \mcP(\pigments) \]
associating to each $i$-edge a subset of cardinality $i$ of $\pigments=\{1,2,3,4\}$. We require the following conditions on the coloring of adjacent edges:
\begin{enumerate}
\item[(a)] At each $112$-vertex the coloring of the $2$-edge is the union of colorings of the two $1$-edges. 
\item[(b)] At each $222$-vertex each of the colors of the  adjacent
  $2$-edges are the three two-element subsets of a cardinality three subset of $\pigments$.  
\item[(c)] At each defect the colorings of the adjacent $2$-edges are complementary. 
\end{enumerate}
\begin{figure}[ht]
  \centering
  \NB{\tikz[scale=0.6]{\input{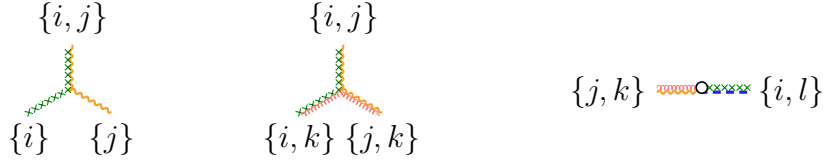}}}
  \caption{From left to right: prototypical colorings at
    $112$-vertices, $222$-vertices and defects.}
  \label{fig:col-vertex}
\end{figure}
Prototypical colorings at vertices are depicted on Figure~\ref{fig:col-vertex}, with $\pigments= \{i,j,k,l\}$.
We write $\tc(\Gamma)$ for the number of colorings of $\Gamma$ and call it \emph{the Tait number} of $\Gamma$. The web $\Gamma_1$ in Figure \ref{fig:wedge} admits no colorings, and therefore $\tc(\Gamma_1)=0$. The empty web $\emptyset_1$ admits a single coloring. 
\end{definition}

\begin{proposition}\label{prop:relations}
The function $t_4$ is multiplicative with respect to disjoint unions of
webs. In particular, $\ncol(\emptyset_1) =1$. Moreover, the following local relations hold:
\begin{align}
    \label{eq_add_circle} \ncol\left(\NB{\tikz[]{\begin{scope}
  \draw (0,0) circle (0.2cm);
\end{scope}}} \right) &= 4  , &&& 
    \ncol\left(\NB{\tikz[]{\begin{scope}
  \draw[double] (0,0) circle (0.2cm);
\end{scope}}} \right) &= 6  , \\
\label{eq_loop_zero}
    \ncol\left(\NB{\tikz[]{\begin{scope}
\clip (-0.3,-0.22) rectangle (0.5, 0.22); 
  \draw[double] (-0.3,0) -- (0,0);
  \draw (0,0) .. controls +(45:0.8) and + (-45:0.8) .. (0,0);
\end{scope}}}\right) &= 0, &&&
\ncol\left(\NB{\tikz[]{\begin{scope}
  \draw[double] (0,0) circle (0.3cm);
  \node at (0.3,0) {$\myvertex$};
\end{scope}}}\right) &= 0, \\ 
\label{eq_defects}
    \ncol\left(\NB{\tikz[]{\begin{scope}
  \draw[double] (-0.5,0) -- (0.5,0) coordinate[pos=0.3] (v1)
  coordinate[pos=0.7] (v2);
  \node[anchor=center, yshift=-0.15mm] at (v1) {$\myvertex$};
  \node[anchor=center, yshift=-0.15mm] at (v2) {$\myvertex$};
  % \draw[double] ( 0.5,0) -- ( 0.3,0);
  % \draw (-0.3,0) .. controls +(60:0.3) and + (120:0.3) .. (0.3,0);
  % \draw (-0.3,0) .. controls +(-60:0.3) and + (-120:0.3) .. (0.3,0);
\end{scope}
}}\right) &=
    \ncol\left(\NB{\tikz[]{\begin{scope}
  \draw[double] (-0.3,0) -- (0.3,0);
  % \draw[double] ( 0.5,0) -- ( 0.3,0);
  % \draw (-0.3,0) .. controls +(60:0.3) and + (120:0.3) .. (0.3,0);
  % \draw (-0.3,0) .. controls +(-60:0.3) and + (-120:0.3) .. (0.3,0);
\end{scope}
}}\right), &&& \ncol\left(\NB{\tikz[]{ \begin{scope}
    \draw[double] (0,0) -- (90: 0.5);
    \node at (90: 0.25) {$\myvertex$};
    \draw[double] (0,0) -- (210: 0.5);
    \draw[double] (0,0) -- (-30: 0.5); 
  \end{scope}}}\right) &= \ncol\left(\NB{\tikz[]{ \begin{scope}
    \draw[double] (0,0) -- (90: 0.5);
    \draw[double] (0,0) -- (210: 0.5);
    \draw[double] (0,0) -- (-30: 0.5);
    \node at (-150: 0.25) {$\myvertex$};
  \end{scope}}}\right), \\
\label{eq_digon_1}
    \ncol\left(\NB{\tikz[]{\begin{scope}
  \draw[double] (-0.5,0) -- (-0.3,0);
  \draw[double] ( 0.5,0) -- ( 0.3,0);
  \draw (-0.3,0) .. controls +(60:0.3) and + (120:0.3) .. (0.3,0);
  \draw (-0.3,0) .. controls +(-60:0.3) and + (-120:0.3) .. (0.3,0);
\end{scope}
%%% Local Variables:
%%% mode: latex
%%% TeX-master: t
%%% End:
}}\right) &=
2\, \ncol\left(\NB{\tikz[]{}}\right), &&&
    \ncol\left(\NB{\tikz[]{\begin{scope}
  \draw[double] (-0.5,0) -- (-0.3,0);
  \draw[double] ( 0.5,0) -- ( 0.3,0);
  \draw[double] (-0.3,0) .. controls +(60:0.3) and + (120:0.3) .. (0.3,0);
  \draw[double] (-0.3,0) .. controls +(-60:0.3) and + (-120:0.3) .. (0.3,0);
\end{scope}}}\right) &=
    4\, \ncol\left(\NB{\tikz[]{}}\right), \\
\label{eq_digon_2}
    \ncol\left(\NB{\tikz[]{\begin{scope}
  \draw (-0.5,0) -- (-0.3,0);
  \draw ( 0.5,0) -- ( 0.3,0);
  \draw[double] (-0.3,0) .. controls +(60:0.3) and + (120:0.3) .. (0.3,0);
  \draw (-0.3,0) .. controls +(-60:0.3) and + (-120:0.3) .. (0.3,0);
\end{scope}
%%% Local Variables:
%%% mode: latex
%%% TeX-master: t
%%% End:
}}\right) &=
    3\, \ncol\left(\NB{\tikz[]{\begin{scope}
  \draw (-0.3,0) -- (0.3,0);
\end{scope}
}}\right), &&& \ncol\left(\NB{\tikz[]{\begin{scope}
  \draw (-0.5,0) -- (-0.3,0);
  \draw ( 0.5,0) -- ( 0.3,0);
  \draw[double] (-0.3,0) .. controls +(60:0.3) and + (120:0.3) .. (0.3,0);
\node at (0,0.18) {$\myvertex$};
  \draw (-0.3,0) .. controls +(-60:0.3) and + (-120:0.3) .. (0.3,0);
\end{scope}}}\right) &=
    0, 
    \\
\label{eq_quadruple_v}\ncol\left( \NB{\tikz[]{\input{\imagesfolder/4v_dumbell-bullet}}}\right) &=
    \ncol\left(\NB{\tikz[]{\input{\imagesfolder/4v_dumbell-bullet-intro}}}\right) \\
\label{eq_triangle_1}
\ncol\left(\NB{\tikz[]{\begin{scope}
    \draw[double] (90:0.25) -- (90: 0.5); 
    \draw (-150:0.25) -- (-30: 0.25); 
    \draw (-150:0.25) -- (90: 0.25);
    \draw (-30:0.25) -- (90: 0.25); 
    \draw[double] (-150:0.25) -- (-150: 0.5);
    \draw[double] (-30:0.25) -- (-30: 0.5); 
  \end{scope}}}\right) & = \ncol\left(\NB{\tikz[]{\begin{scope}
    \draw[double] (0,0) -- (90: 0.5); %node [font= \tiny, above] {$3$};
    \draw[double] (0,0) -- (210: 0.5) coordinate [pos=0.5] (v); %node [font= \tiny, left] {$2$};
    \draw[double] (0,0) -- (-30: 0.5);% node [font= \tiny, right] {$1$};
%    \node at (v) {$\myvertex$};
\end{scope}

}}\right),   &&& \ncol\left(\NB{\tikz[]{\begin{scope}
    \draw[double] (90:0.25) -- (90: 0.5); 
    \draw[double] (-150:0.25) -- (-30: 0.25); 
    \draw[double] (-150:0.25) -- (90: 0.25);
    \draw[double] (-30:0.25) -- (90: 0.25); 
    \draw[double] (-150:0.25) -- (-150: 0.5);
    \draw[double] (-30:0.25) -- (-30: 0.5); 
  \end{scope}}}\right) & = 2\, \ncol\left(\NB{\tikz[]{}}\right),   \\
\label{eq_triangle_2}
\ncol\left(\NB{\tikz[]{\begin{scope}
    \draw[double] (90:0.25) -- (90: 0.5); 
    \draw[double] (-150:0.25) -- (-30: 0.25); 
    \draw (-150:0.25) -- (90: 0.25);
    \draw (-30:0.25) -- (90: 0.25); 
    \draw (-150:0.25) -- (-150: 0.5);
    \draw (-30:0.25) -- (-30: 0.5); 
  \end{scope}

}}\right) &= \ncol\left(\NB{\tikz[]{\begin{scope}[rotate = -120]
    \draw (0,0) -- (90: 0.5); %node [font= \tiny, above] {$3$};
    \draw[double] (0,0) -- (210: 0.5) coordinate [pos=0.5] (v); %node [font= \tiny, left] {$2$};
    \draw (0,0) -- (-30: 0.5);% node [font= \tiny, right] {$1$};
%    \node at (v) {$\myvertex$};
\end{scope}

}}\right),   &&&  \ncol\left(\NB{\tikz[]{\begin{scope}
    \draw[double] (90:0.25) -- (90: 0.5); 
    \draw (-150:0.25) -- (-30: 0.25); 
    \draw[double] (-150:0.25) -- (90: 0.25);
    \draw[double] (-30:0.25) -- (90: 0.25); 
    \draw (-150:0.25) -- (-150: 0.5);
    \draw (-30:0.25) -- (-30: 0.5); 
  \end{scope}}}\right) &= 2\, \ncol\left(\NB{\tikz[]{}}\right),  
\end{align}\vspace{-0.35cm}
\begin{align}
\label{eq_square_1}
\ncol\left( \NB{\tikz[]{\begin{scope}
\begin{scope}
  \draw[double] (45:0.3) -- (45:0.5);
  \draw[double] (-45:0.3) -- (-45:0.5);
  \draw[double] (135:0.3) -- (-135:0.3);
\end{scope}
\begin{scope}
  \draw (135:0.3) -- (135:0.5);
  \draw (-135:0.3) -- (-135:0.5);
  \draw (135:0.3)-- (45:0.3) -- (-45:0.3) -- (-135:0.3);
\end{scope}
\end{scope}}}\right)&=
    \ncol\left( \NB{\tikz[]{\begin{scope}
    \draw (0,-0.15) -- (0, 0.15); %node [font= \tiny, left] {$2$};
    \draw (135:0.5) .. controls  (135:0.25) and +(0,0) .. (0,0.15);
    \draw[double] (45:0.5) .. controls  (45:0.25) and +(0,0)
    .. (0,0.15) coordinate[pos=0.3] (v1);
    \draw (-135:0.5) .. controls  (-135:0.25) and +(0,0) .. (0,-0.15);
    \draw[double] (-45:0.5) .. controls  (-45:0.25) and +(0,0)
    .. (0,-0.15)
    coordinate[pos=0.3] (v2);
    \node at (v1) {$\myvertex$};
    \node at (v2) {$\myvertex$};
\end{scope}

%%% Local Variables:
%%% mode: latex
%%% TeX-master: t
%%% End:
}}\right) +
    \ncol\left( \NB{\tikz[]{\begin{scope}
  \draw[double] (45:0.5) .. controls (45:0.3) and
  (-45:0.3) .. (-45:0.5);
  \draw (135:0.5) .. controls (135:0.3) and (-135:0.3).. (-135:0.5);
\end{scope}
}}\right),\\
    \label{eq_square_2}
\ncol\left( \NB{\tikz[]{\begin{scope}[xscale=-1]
\begin{scope}
  \draw (45:0.3) -- (45:0.5);
  \draw (-45:0.3) -- (-45:0.5);
  \draw[double] (135:0.3) -- (-135:0.3);
\end{scope}
\begin{scope}
  \draw[double] (135:0.3) -- (135:0.5);
  \draw[double] (-135:0.3) -- (-135:0.5);
  \draw[double] (-45 :0.3) -- (-135:0.3) -- (135:0.3)-- (45:0.3);
  \draw (-45:0.3) -- (45:0.3);
\end{scope}
\end{scope}}}\right)&=
    \ncol\left( \NB{\tikz[]{}}\right) +
    \ncol\left( \NB{\tikz[]{\input{\imagesfolder/12v12}}}\right), \\
    \label{eq_square_3}
    \ncol\left( \NB{\tikz[]{\begin{scope}
\begin{scope}
  \draw (45:0.3) -- (45:0.5); %NE
  \draw (-45:0.3) -- (-45:0.5); %SE
  \draw [double](135:0.3) -- (-135:0.3); %izdacuadr
\end{scope}
\begin{scope}
  \draw (135:0.3) -- (135:0.5); %NW
  \draw (-135:0.3) -- (-135:0.5); %SW
  \draw (-45 :0.3) -- (-135:0.3); %abajocuadr
  \draw (135:0.3)-- (45:0.3); %arribacuadr
  \draw[double] (-45:0.3) -- (45:0.3); %derechacuadr
\end{scope}
\end{scope}}}\right)&=
    \ncol\left( \NB{\tikz[rotate=90]{\begin{scope}
  \draw (45:0.5) .. controls (45:0.3) and
  (-45:0.3) .. (-45:0.5);
  \draw (135:0.5) .. controls (135:0.3) and (-135:0.3).. (-135:0.5);
\end{scope}}}\right) +
    2\, \ncol\left( \NB{\tikz[]{}}\right), \\
    \label{eq_square_4}
    \ncol\left( \NB{\tikz[]{\input{\imagesfolder/Square2211}}}\right)&=
    \ncol\left( \NB{\tikz[rotate=90]{\begin{scope}[xscale=-1]
    \draw (0,-0.15) -- (0, 0.15); %vertical %node [font= \tiny, left] {$2$};
    \draw [double](135:0.5) .. controls  (135:0.25) and +(0,0) .. (0,0.15) coordinate[pos=0.3] (v1); %NW
    \draw (45:0.5) .. controls  (45:0.25) and +(0,0) .. (0,0.15); %NE
    \draw (-135:0.5) .. controls  (-135:0.25) and +(0,0) .. (0,-0.15);
    \draw[double] (-45:0.5) .. controls  (-45:0.25) and +(0,0)
    .. (0,-0.15)
    coordinate[pos=0.3] (v2);
    \node at (v1) {$\myvertex$};
    \node at (v2) {$\myvertex$};
\end{scope}}}\right) +
    \ncol\left( \NB{\tikz[]{\begin{scope}
    \draw (0,-0.15) -- (0, 0.15); %vertical %node [font= \tiny, left] {$2$};
    \draw [double](135:0.5) .. controls  (135:0.25) and +(0,0) .. (0,0.15) coordinate[pos=0.3] (v1); %NW
    \draw (45:0.5) .. controls  (45:0.25) and +(0,0) .. (0,0.15); %NE
    \draw (-135:0.5) .. controls  (-135:0.25) and +(0,0) .. (0,-0.15);
    \draw[double] (-45:0.5) .. controls  (-45:0.25) and +(0,0)
    .. (0,-0.15)
    coordinate[pos=0.3] (v2);
    \node at (v1) {$\myvertex$};
    \node at (v2) {$\myvertex$};
\end{scope}}}\right), \\
    \label{eq_sq5}
    \ncol\left( \NB{\tikz[]{\input{\imagesfolder/Square2222}}}\right)=
    2\, \left[ \ncol\left( \NB{\tikz[rotate=90]{\begin{scope}
  \draw[double] (45:0.5) .. controls (45:0.3) and
  (-45:0.3) .. (-45:0.5);
  \draw[double] (135:0.5) .. controls (135:0.3) and (-135:0.3).. (-135:0.5);
\end{scope}}}\right)\right. &+ \left.\ncol\left(\NB{\tikz[]{}}\right) + \ncol\left( \NB{\tikz[]{\input{\imagesfolder/4v_edges22bullets}}}\right)+
    \ncol\left( \NB{\tikz[rotate=90]{\input{\imagesfolder/4v_edges22bullets}}}\right) \right].
\end{align}
\end{proposition}

Note that the first term on the right-hand side of~\eqref{eq_square_2} can be further simplified via~\eqref{eq_square_1}. 

The relation \eqref{eq_loop_zero} says that a loop in a graph
evaluates it to $0$. Relations \eqref{eq_defects} allow to simplify dots on thick strands and to cancel and move around valency two vertices: two such dots on a single double strand can be removed, and a dot can be moved from one double strand to an adjacent strand. Here by a \emph{double strand} we mean a  consecutive chain of double edges separated by valency two vertices, with valency three vertices at its two endpoints.    

Four types of digon regions are possible in webs, and relations \eqref{eq_digon_1} and \eqref{eq_digon_2} show how to 
simplify a web with a digon region when computing its Tait number $t_4$.  

There are four types of triangular regions, and all can be simplified (collapsed) to a point, sometimes at the cost of scaling $t_4$ by two, see relations \eqref{eq_triangle_1}, \eqref{eq_triangle_2}. In particular, relation (\ref{eq_triangle_1}a) allows to replace each vertex of type 222 in a web for three vertices of type 112. We will make use of this in the proof of Proposition \ref{prop:t4computable}.

In a square region, any quadruple of thicknesses $1$ and $2$ of the four edges is possible, leading to six possibilities, up to rotation and reflection $1111, \ 1112, \ 1122, \ 1212, \ 1222, \ 2222$.
For five of these cases there are reduction formulas allowing to simplify the regions when computing $t_4$, see equations \eqref{eq_square_1}--\eqref{eq_sq5}. 
 The case 1111, when the four edges in the square all have thickness one, cannot be reduced, apparently.  

None of the regions with five and more sides seem to simplify in a way similar to the above proposition, as an $\N$-linear combination of diagrams with fewer regions or lower complexity.

\begin{proof}[Proof of Proposition \ref{prop:relations}]
Relations follow by case by case inspection. We include a proof of
relation (\ref{eq_square_1}). In Table~\ref{tab:checkcol}, the matrix
entry $(a,b)$ is the number of $4$-colorings of the web $b$ in the top
row whose endpoints have been assigned color combination $a$ in the
first column. Notice that any color must appear an even number of times on the boundary of a web without defects, which helps to list possible colorings. For the middle web, with two defects, one checks directly that only two of the four types of boundary colorings shown in the table may appear. In the table and in the notations throughout the paper, $\{i,j,k,l\} = \{1,2,3,4\}$. 
\begin{table}
\begin{tabular}{ |c|c|c|c| } 
 \hline
&  \,\NB{\tikz[]{\input{\imagesfolder/4v_square}}} \, &  \, \NB{\tikz[]{\input{\imagesfolder/4v_dumbell2bullet}}}\, & \,  \NB{\tikz[]{\input{\imagesfolder/4v_edges12}}} \, \\ \hline 
$\begin{array}{cc}
i & ij  \\
i & ij \end{array}$ & 1  & 0 & 1 \\ \hline
$ \begin{array}{cc}
k & ij  \\
k & ij \end{array}$& 2 & 1 & 1 \\ \hline
$\begin{array}{cc}
j & ij  \\
k & ik \end{array}$ & 0 & 0 & 0 \\ \hline
$\begin{array}{cc}
k & ij  \\
j & ik \end{array}$ & 1 & 1 & 0 \\ \hline
\end{tabular}
\\
\caption{Colorings illustrating the proof of relation \eqref{eq_square_1} in Proposition \ref{prop:relations}. Left column denotes coloring of the endpoints, with $\{i,j,k,l\} = \pigments$.}\label{tab:checkcol}
\end{table} \end{proof}

\begin{proposition}\label{prop_tutte}

The following local Tutte-like relations hold:
\begin{align} 
\label{eq_tutte_1}
\ncol\left(\NB{\tikz[]{\begin{scope}
    \draw[double] (0,-0.15) -- (0, 0.15); %node [font= \tiny, left] {$2$};
    \draw (135:0.5) .. controls  (135:0.25) and +(0,0) .. (0,0.15);
    \draw (45:0.5) .. controls  (45:0.25) and +(0,0) .. (0,0.15); 
    \draw (-135:0.5) .. controls  (-135:0.25) and +(0,0) .. (0,-0.15);
    \draw (-45:0.5) .. controls  (-45:0.25) and +(0,0) .. (0,-0.15);         
  \end{scope}}}\right) + \ncol\left(\NB{\tikz[rotate=90]{}} \right) &=
    \ncol\left(\NB{\tikz[rotate=90]{}} \right)+ \ncol\left(\NB{\tikz[]{}} \right), \\
    \label{eq_tutte_2}
    \ncol\left(\NB{\tikz[rotate=180]{\input{\imagesfolder/12v12}}}\right) + \ncol\left(\NB{\tikz[rotate=180]{}} \right) &=
    \ncol\left(\NB{\tikz[rotate=180]{}} \right)+ \ncol\left(\NB{\tikz[rotate=-90]{\input{\imagesfolder/11v22}}} \right), \\ 
    \label{eq_tutte_3}
     \ncol\left(\NB{\tikz[rotate=90]{\input{\imagesfolder/12defv21}}}\right) + \ncol\left(\NB{\tikz[rotate=270]{\input{\imagesfolder/12defv21}}} \right) &=
    \ncol\left(\NB{\tikz[xscale=-1]{\input{\imagesfolder/12defv21}}} \right)+ \ncol\left(\NB{\tikz[yscale=-1]{\input{\imagesfolder/12defv21}}} \right).\end{align}
\end{proposition}

\begin{proof}
Via case by case inspection in a similar way to that of previous proposition. 
\end{proof}

\begin{proposition}
    \label{prop:t4computable}
    Relations in Propositions~\ref{prop:relations} and \ref{prop_tutte} determine $t_4$.
\end{proposition}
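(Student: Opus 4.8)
The plan is to prove the statement by a terminating reduction: I set up an induction on a lexicographic complexity measure of the web and use the relations of Propositions~\ref{prop:relations} and~\ref{prop_tutte} as rewriting rules, each of which expresses $t_4$ of a web as an $\N$-linear combination of $t_4$ of strictly simpler webs. The base of the induction is the collection of disjoint unions of the empty web and of circles: by multiplicativity together with $\tc(\emptyset_1)=1$ and the two values in \eqref{eq_add_circle}, the function $t_4$ is completely pinned down on these. A web carrying a loop is sent to $0$ by \eqref{eq_loop_zero}, so such terms are resolved immediately and never obstruct the induction.

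First I would normalize the vertex and defect structure, taking as the two leading entries of the complexity measure the number of $222$-vertices and the number of defects. Reading relation \eqref{eq_triangle_1} from right to left replaces a $222$-vertex by a small triangle of three $112$-vertices, as noted in the remark preceding the proposition; iterating this drives the number of $222$-vertices to zero. The defect relations \eqref{eq_defects} then cancel defects in pairs along a common double strand and slide the survivors into a standard position, lowering the second entry. Both kinds of move strictly decrease the leading entries even though the first may increase the number of edges, so they are legitimate reductions. After normalization the web is a cubic planar graph in which the $1$-edges form disjoint simple closed curves and the $2$-edges form a perfect matching between the $112$-vertices.

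Next I reduce small faces, using the number of edges and the number of faces of degree at least five as the lower-order entries of the measure. Whenever a region with two or three sides is present I collapse it via the digon relations \eqref{eq_digon_1}–\eqref{eq_digon_2} or the triangle relations \eqref{eq_triangle_1}–\eqref{eq_triangle_2}, and a four-sided region of any type other than $1111$ is resolved by the square relations \eqref{eq_square_1}–\eqref{eq_sq5}; each such relation rewrites the web as a combination of webs with strictly fewer vertices and edges. The remaining four-sided case $1111$, which admits no direct collapse, is handled by the Tutte-like relation \eqref{eq_tutte_1}: after rearrangement it expresses the all-thin square in terms of its flip and of two configurations with fewer internal vertices, while \eqref{eq_tutte_2}–\eqref{eq_tutte_3} play the same role once thick edges and defects are involved.

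The hard part will be termination in the generic case where, by Euler's formula, a normalized web may consist entirely of pentagonal and larger faces, so that none of the collapse relations applies. Here I would reinterpret \eqref{eq_tutte_1}–\eqref{eq_tutte_3} not as face reductions but as local \emph{flip} (deletion–contraction) moves at a single edge: each equates two webs that differ by a re-routing, modulo webs with strictly fewer vertices, and a flip alters the surrounding face structure so that after a suitable sequence of flips a region of degree at most four appears and can be collapsed by the earlier relations. The genuinely delicate point — and the one I expect to be the main obstacle — is to guarantee that such a reducing sequence always exists while every auxiliary term stays strictly below the current web in the lexicographic measure, so that the rewriting is well-founded and cannot loop. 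Establishing this unavoidability of a reducible configuration is precisely the combinatorial core that this theory shares with the four-colour problem, and carrying it out rigorously, rather than the individual case checks, is where the real work lies.
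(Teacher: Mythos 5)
Your reduction strategy is sound up to the point you yourself flag, but that flagged point is a genuine gap, and the paper closes it with an idea your proposal is missing. After normalizing away $222$-vertices you correctly observe that the web becomes cubic, and for a cubic planar graph Euler's formula only guarantees a face of degree at most five --- so, as you say, you can get stuck with nothing but pentagonal and larger faces, and your proposed escape (iterating the Tutte relations as flips until a small face appears, with termination controlled by a lexicographic measure) is left unproved. The paper's resolution is not to fight this in the cubic picture at all: it contracts every thick strand (a chain of $2$-edges and defects with $112$-vertices at its two ends) to a single $4$-valent vertex, producing a $4$-regular planar graph $\Gamma^\times$. For a $4$-regular planar graph the Euler count gives $2E=4V$ and $F=V+2$, so the average face degree is $4V/(V+2)<4$ and a loop, bigon, or triangle is \emph{unavoidable}. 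The relations \eqref{eq_quadruple_v} and \eqref{eq_tutte_1}, combined with the inductive hypothesis on the number of $112$-vertices (the remaining terms of \eqref{eq_tutte_1} have strictly fewer such vertices), show that computability of $t_4$ is an invariant of $\Gamma^\times$ alone, independent of which of the several webs contracting to it one starts from; one may therefore choose a preimage web realizing the small face conveniently and collapse it with \eqref{eq_loop_zero}, \eqref{eq_digon_1}, or, in the triangle case, the chain of transformations using (\ref{eq_triangle_1}b), \eqref{eq_defects} and \eqref{eq_tutte_2}.

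So the ``combinatorial core shared with the four-colour problem'' that you anticipate having to confront never materializes: the unavoidability statement actually needed is the elementary one for $4$-regular planar graphs, not a discharging argument for cubic ones. Without the contraction to $\Gamma^\times$ (or an equivalent device) your induction has no guaranteed terminating move in the all-pentagon situation, so as written the proposal does not constitute a proof.
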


This statement and its proof are analogous to \cite[Theorem XII \& XIII]{Tutte1947}.

\begin{proof}
Assume that the web $\Gamma$ is connected and does not contain
vertices of type $222$ (otherwise, use relation
(\ref{eq_triangle_1}a)), and denote by $k_\Gamma$ the number of
vertices of type 112 in $\Gamma$. It is clear that $k_{\Gamma}$ is
even, since it is twice the number of thick strands. Using relation
(\ref{eq_defects}a), we can further assume that each thick strands
contains at most one defect. 

The proof follows by induction on $k_\Gamma$. If $k_{\Gamma}$ is zero, then $t_4(\Gamma)$ is determined by relation \eqref{eq_add_circle} (and possibly by (\ref{eq_loop_zero}b)).

Assume now that the statement holds for any web $\Gamma'$ with $k_{\Gamma'} <n$, and let $k_\Gamma = n$.

Observe that \eqref{eq_quadruple_v} and  \eqref{eq_tutte_1} together with inductive hypothesis imply that the local changes
\begin{equation*}\label{eq:localmove}
 \NB{\tikz[scale=1.2]{\input{\imagesfolder/4v_dumbell-bullet}}}\longleftrightarrow
    \NB{\tikz[scale=1.2]{\input{\imagesfolder/4v_dumbell-bullet-intro}}}, \hspace{2cm} \NB{\tikz[scale=1.2]{}} \longleftrightarrow
\NB{\tikz[rotate=90, scale=1.2]{}}, 
\end{equation*}
preserve the computability of $t_4$ via the above relations.

Define $\Gamma^\times$ as the $4$-valent regular planar graph obtained from $\Gamma$ by contracting thick strands \begin{equation*}
   \NB{\tikz[scale=1.2]{}} \mapsto \NB{\tikz[]{}}\qquad \qquad \NB{\tikz[scale=1.2]{\input{\imagesfolder/4v_dumbell-bullet}}} \mapsto \NB{\tikz[]{\begin{scope}[scale=1.2]
  \draw (45:0.5) -- (225:0.5);
  \draw (135:0.5) -- (315:0.5);
  \node at (0,0) {$\circ$};
\end{scope}}}. 
\end{equation*}

The standard Euler characteristic argument for $\Gamma^\times$ implies that it contains one of the following local pieces (with or without labelling $\circ$ of the vertices):
\begin{equation}\label{options}
\NB{\tikz[]{\begin{scope}
\clip (225:0.6) rectangle (45:0.6);
\draw (0,0) -- (315:0.5);
\draw (0,0) -- (225:0.5);
\draw (0,0) .. controls (45:0.7) and (135:0.7) .. (0,0);
\end{scope}}},\qquad\qquad\NB{\tikz[]{}} \qquad\qquad \text{or}\qquad\qquad\NB{\tikz[]{}}
\end{equation}

 Observe that if two webs $\Gamma_1$ and $\Gamma_2$ lead to the same graph $\Gamma_1^\times = \Gamma_2^\times$, then either $t_4(\Gamma_1)$ and $t_4(\Gamma_2)$ are both computable from relations in the statement, or neither of them is. Then, for the first two cases in \eqref{options} we can assume that the web $\Gamma$
contains a local piece of the form
\begin{equation*}
\NB{\tikz[]{}}\qquad \qquad \NB{\tikz[]{}} ,
\end{equation*} and relations (\ref{eq_loop_zero}a) and (\ref{eq_digon_1}a) determine $t_4(\Gamma)$. 

For the last case, we can assume that $\Gamma$ contains a local piece of the form
\begin{equation*}
\NB{\tikz[]{\input{\imagesfolder/4v_gamma}}}
\end{equation*}
where each red defect may or may not appear.  Using relations (\ref{eq_triangle_1}b), \eqref{eq_defects} and \eqref{eq_tutte_2} we get the following chain of local transformations
\begin{align*}
\ncol\left({\NB{\tikz[]{\input{\imagesfolder/4v_gamma}}}}\right) \quad & = \quad \ncol \left({\NB{\tikz[]{\begin{scope}[scale=1.2]
    \draw[double] (90:0) -- (90: 0.4); 
    \draw[double] (-150:0) -- (-150: 0.4);
    \draw[double] (-30:0) -- (-30: 0.4); 
    \draw (90:0.4) -- (100:0.55);
    \draw (90:0.4) -- (80:0.55);
    \draw (-150:0.4) -- (-140:0.55);
    \draw (-150:0.4) -- (-160:0.55);
    \draw (-30:0.4) -- (-20:0.55);
    \draw (-30:0.4) -- (-40:0.55);
    \node at (90:0.2) {$\myvertextwo$};
    \node at (-150:0.2) {$\myvertextwo$};
    \node at (-30:0.2) {$\myvertextwo$};
  \end{scope}}}}\right) \quad = \quad \ncol \left({\NB{\tikz[]{\input{\imagesfolder/4v_gamma2}}}}\right) \quad  \\ 
& = \quad \ncol\left({\NB{\tikz[]{\input{\imagesfolder/4v_gamma3}}}}\right) + \ncol\left({\NB{\tikz[]{\input{\imagesfolder/4v_gamma4}}}}\, \right) - \ncol\left({\NB{\tikz[]{\begin{scope}[scale=1.3, yscale=0.8]
 \draw (60:0.7) .. controls (60:0.3) and (-60:0.3).. (-60:0.7);
 \draw[double] (120:0.55) .. controls (120:0.3) and (-120:0.3).. (-120:0.55);
 \draw (120:0.55) -- (110:0.7);
\draw (120:0.55) -- (130:0.7);
 \draw (-120:0.55) -- (-110:0.7);
\draw (-120:0.55) -- (-130:0.7);
 \node at (180:0.18) {$\myvertextwo$};
\end{scope}

 }}}\, \right).
\end{align*}
A similar computation works if starting with an even number of dot defects shown in red. 
 Inductive hypothesis completes the proof. 
\end{proof}

The set of colorings of a given web can be endowed with an equivalence
relation induced by so-called Kempe moves. 

\begin{definition}
  Let $\Gamma$ be a web and $c$ and $c'$ be two colorings of $\Gamma$. We
  say that $c$ and $c'$ are related by a \emph{Kempe-move} if they agree on
  every edge except for edges on a closed even length cycle. Two colorings $c_1$ and $c_2$
  of $\Gamma$ are \emph{Kempe-equivalent} if they are related by a
  finite number of Kempe-moves. Classes for this equivalence relation
  are called \emph{Kempe-classes}.  
\end{definition}

We will make use of this definition in Sections~\ref{sec_state_sp} and ~\ref{sec_localization}.

\section{Foams and their evaluation}
\label{sec:foams-and-eval}
\begin{definition}\label{def_foam}
  A (closed) \emph{foam} $F$ is a finite 2-dimensional CW complex
  whose facets are labeled $1$ or $2$ (this label is the
  \emph{thickness} of that facet). Locally it is required to be
  homeomorphic to one of the local models depicted in
  Figure~\ref{fig:localmodelsfoams}, where grey color represents
  facets of thickness $2$.

  Points having neighborhood homeomorphic to an open disc are
  \emph{regular}. Points whose neighborhood is homeomorphic to the
  product of a tripod and an open interval are \emph{seam
    points}. Finally, those whose neighborhood is homeomorphic to the
  cone over the $1$-skeleton of a tetrahedron (see
  Figure~\ref{fig:cones}) are \emph{singular vertices}. The union of
  seams and singular vertices has a structure of a 4-valent graph
  denoted $\seam(F)$.
\end{definition}

\begin{figure}
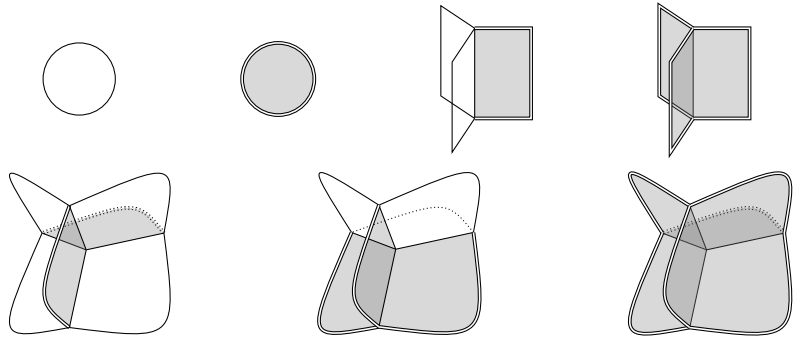

  \centering
\NB{\tikz[scale=0.6]{\input{\imagesfolder/4v_disk1}}}\qquad \qquad
\NB{\tikz[scale=0.6]{\input{\imagesfolder/4v_disk2}}}\qquad \qquad
\NB{\tikz[scale=1.5]{\input{\imagesfolder/4v_seam211}}}\qquad \qquad
\NB{\tikz[scale=1.5]{\input{\imagesfolder/4v_seam222}}} \\
\NB{\tikz[]{\input{\imagesfolder/4v_singvertex221111}}}\qquad \qquad
\NB{\tikz[]{\input{\imagesfolder/4v_singvertex222111}}}\qquad \qquad
\NB{\tikz[]{\input{\imagesfolder/4v_singvertex222222}}}
\caption{Standard neighborhoods of a regular point (first two diagrams in upper row), of a seam point of type $112$ and $222$ (third and fourth in upper row, respectively) and of a singular vertex of type $1^42^2$, $1^32^3$ and $2^6$ (first, second and third in second row, respectively).}
\label{fig:localmodelsfoams}
\end{figure}

\begin{figure}
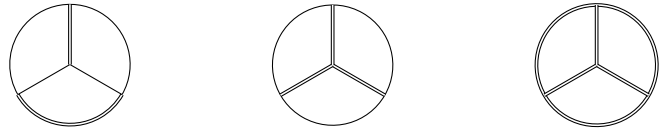

  \centering
  \NB{\tikz[]{\input{\imagesfolder/4v_cone221111}}}\qquad \qquad
  \NB{\tikz[]{\input{\imagesfolder/4v_cone222111}}}\qquad \qquad
  \NB{\tikz[]{\input{\imagesfolder/4v_cone222222}}}\qquad \qquad
  \caption{Standard neighbourhoods of singular vertices in
    Figure~\ref{fig:localmodelsfoams} above are homeomorphic to cones
    over the $K_4$ graph with $2,3$ and $6$ double edges,
    respectively.}\label{fig:cones}
\end{figure}

Foams are required to be PL-embedded in $\R^3$ and can be endowed with
additional combinatorial/geometrical data:
\begin{enumerate}
\item \emph{Defect lines}: A (possibly empty) set of PL-curves in the
  closure of facets of thickness 2 in general position with respect to
  seams, that we denote $\defect(F)$. Points in a defect line are not
  considered to be regular. A \emph{region} of a foam $F$ is a connected component of 
    \begin{equation*}
    \regions(F) \ := \ F\setminus (\seam(F)\cup \defect(F)).
    \end{equation*}
    Each region $r$ inherits  thickness $\thickness(r)\in\{1,2\}$ from the facet containing $r$. We call $\regions(F)$ \emph{the set of regions} of $F$. 
  
\item \emph{Decorations}: A (possibly empty) collection of dots which
  are on regular points. Dots floating in regions of thickness $1$ (resp.{} $2$) are labeled by symmetric polynomial in $1$ (resp.{} $2$)
  variable, with variables having degree $2$. For dots on a region of thickness $1$, we use the convention that an integer $a$ refers to the polynomial $X^a$ (we sometimes omit the label when $a=1$). For dots on facets of thickness $2$, we use $e_1$ to refer to the first elementary polynomial in two variables and Young diagrams refer to their corresponding Schur polynomial in two variables.

\item \emph{Kempe-specification}: A map from the set of singular vertices of type $2^6$ to $\{\kempesquare, \kempetriangle\}$. 
\end{enumerate}

A \emph{foam $F$ with boundary} is a CW-complex embedded in $\R^2\times [0,1]$, which have the same local models as a closed foam in $\R^2\times (0,1)$ and is homeomorphic to a product of a web with an interval in $\R^2\times [0, \epsilon)$ and in $\R^2\times (1-\epsilon, 1]$ for some positive $\epsilon$. The web $F\cap \R^2\times \{i\}$, denoted $\partial_iF$, is called the $i$-boundary of $F$, for $i \in \{0,1\}$. 

Webs and foams form a corbordism-like category denoted $\catFoam$, whose objects are webs and morphisms between $\Gamma_0$ and $\Gamma_1$ are foams $F$ with boundary such that $\Gamma_i = \partial_iF$ for $i\in\{0,1\}$, considered up to ambient isotopy relative to boundary and preserving all decorations. Identities are given by product of webs with $[0,1]$, and the composition comes from stacking foams and rescaling in the vertical direction. In this category, a closed foam gives a cobordism from the empty web $\emptyset_1$ to itself.

\subsection{Foam evaluation}\label{subsec_evaluation}

Throughout this section we'll work over the two-elements field
$\ftwo$ and use four variables $X_1, \dots, X_4$ which by convention
have degree $2$ each. The graded ring of symmetric
polynomials $\basering$ is denoted $\ourring$ and the ring $\ftwo[X_i,
\frac{1}{X_j+X_k}]_{\substack{1\leq i\leq 4\\ 1\leq j <k \leq 4}}$ is
denoted $\ourring'$. 

Schur polynomials (over $\ftwo$) in two variables $X$ and $Y$ 
forms a linear basis of $\ftwo[X,Y]^{S_2}$. They are parameterized by
Young diagrams $\lambda=(\lambda_1, \lambda_2)$ with at most two rows
$(\lambda_1\geq \lambda_2)$ and given by the following formula:
\[
  s_\lambda(X,Y) = \frac{X^{\lambda_1+1}Y^{\lambda_2}- X^{\lambda_2}Y^{\lambda_1+1}}{X-Y}.
\]

\begin{definition}
\label{def:colfoam}
    A \emph{coloring} of a foam $F$ is a map 
    \[
    c \ : \ \regions(F) \lra \mcP(\pigments)\]
    from the set of its regions to subsets of $\pigments=\{1,2,3,4\} $such that the cardinality of $c(r)$ is the thickness of 
     the region $r\in \regions(F)$, and $c$ satisfies the following requirements (we write $r_i$ for a region with $th(r_i)=i$, for $i=1,2$):
    \begin{enumerate}
        \item \label{it:211-foam} If three regions $r_1^a, r_1^b$ and $r_2$ are adjacent through a seam of type $112$, then  $c(r_2)=c(r_1^a) \sqcup c(r_1^b)$, that is, the coloring of $r_2$ is the disjoint union of those of $r_1^a$ and $r_1^b$. 
        \item \label{it:222-foam} If three regions $r_2^a, r_2^b$ and $r_2^d$ are adjacent through a seam of type $222$, then $c(r_2^a), c(r_2^b), c(r_2^d)$ are the three two-element subsets of a cardinality three subset of $\pigments$.

\item \label{it:defect-foam} If two regions $r_2^a$ and $r_2^b$ are adjacent through a defect, then $c(r_2^a)\cup c(r_2^b) = \pigments$, i.e., their  colors are complementary.
        \item \label{it:Kempe-foam} The colors of the regions around a $2^6$ singular vertex should follow a pattern according to the Kempe-specification of this singular vertex. These patterns are given on Figure~\ref{fig:KempePattern}. 
    \end{enumerate}
    \begin{figure}
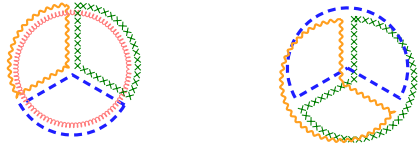

        \centering
        \NB{\tikz[]{\input{\imagesfolder/4v_conetriangle}}}\qquad \qquad \NB{\tikz[]{\input{\imagesfolder/4v_conesquare}}}        
        \caption{Relation between Kempe-specification and local
          coloring around a vertex of type $2^6$. On the left
          (resp. right) a coloring for a Kempe specification
          $\kempetriangle$ (resp. $\kempesquare$) is shown.
          These two types of colorings form the two Kempe-classes of
          the web \NB{\tikz[scale=0.3]{\input{\imagesfolder/4v_cone222222}}}.}
\label{fig:KempePattern}
    \end{figure}
    If $c$ is a coloring of $F$, the pair $(F,c)$ is called a \emph{colored} foam.
\end{definition}

\begin{remark}
    In Definition \ref{def:colfoam}, the constraints (\ref{it:211-foam}--\ref{it:defect-foam}) are inherited from colorings of web.  
\end{remark}

\begin{example}
 \begin{figure}
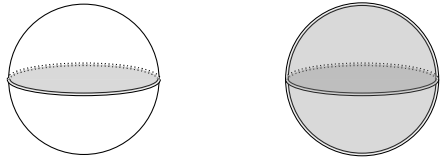

        \centering
        \NB{\tikz[]{\input{\imagesfolder/4v_sphere1}}}\qquad \qquad \NB{\tikz[]{\input{\imagesfolder/4v_sphere2}}}
        \caption{112 theta-foam and $222$ theta-foam, having 12 and 24 different colorings, respectively.}
\end{figure}

\begin{itemize}
\item Coloring of a foam $S\subset \R^3$ which is a connected closed thin, respectively thick, surface is a subset of $\mcP(\pigments)$ of cardinality one, respectively two. Thus, such a foam has four, respectively six, colorings. 
\item By a $112$ \emph{theta-foam} $\Theta_{112}$ we mean a foam which is a union of one thick and two thin disks along the circle. Its colorings are ordered pairs $(i,j)$ of pigments, with $i\not= j$, and this foam has 12 colorings.   
\item A $222$ \emph{theta-foam} $\Theta_{222}$ is given by gluing three thick disks along the boundary circles. Its colorings are in a bijection with ordered triples $(i,j,k)$ of distinct pigments. Each disk is colored by an unordered pair of two pigments out of these three, and the number of colorings is $24$. 
\end{itemize}
\end{example}

  \begin{question}
    Is there a meaningful characterization of colorable SL(4) foams?
  \end{question}
  
Let $(F,c)$ be a colored  closed foam and $i<j$ two distinct elements
of $\pigments$. The union of regions whose
color contains exactly one element of $\{i,j\}$ is a closed surface
denoted $F_{ij}=F_{ij}(c)$. Since $F$ is embedded in $\R^3$, the surface
$F_{ij}$ is orientable and its Euler characteristic is therefore
even. Hence we can define:
\begin{equation*}
\label{eq:QFc} 
Q(F,c)= \prod_{i<j \in \pigments} (X_i +X_j)^{\chi(F_{ij})/2}.
\end{equation*}
Recall that each decoration of $F$ belongs to a region $r$ and each region is assigned a
color $c(r)$ by the coloring $c$. Each decoration is labeled by a symmetric
polynomial in as many variables as the thickness of the regions it
floats in. Hence, we can evaluate it in the variable $\{X_i, i \in
c(r)\}$ and call this the \emph{colored evaluation} of the decoration.
For a colored foam $(F,c)$, define $P(F,c)$ to be the product of
colored evaluation of all decorations.

Finally define 
\begin{equation*}
  \eval[F,c] = \frac{P(F,c)}{Q(F,c)} \in \ourring' \qquad \text{and}
  \qquad \eval[F] = \sum_{c \in \colorings(F)} \eval[F,c]\in \ourring'.
\end{equation*}
The quantity $\eval[F]$ is called the \emph{evaluation} of the foam $F$.

The symmetric group on $4$ elements acts on the set of colorings of a
given foam and acts by permuting the variables
$(X_i)_{i \in \pigments}$. One can check that the following identity
holds:
\[
\eval[F, \sigma\cdot c]= \sigma\cdot \eval[F,c] 
\]
for any $\sigma$ in $S_4$.

\begin{proposition}
  Let $F$ be a foam $F\subset \R^3$ which admits a coloring.
  Suppose a thick facet $f$ of $F$  has a boundary
  circle $C$ with an odd number of vertices and there are no defects
  at the vertices along $C$. Then:
  \begin{itemize}
  \item The number of singular vertices of type $1^32^3$ is even.
  \item The number of singular vertices of type $1^42^2$ plus the
    number of singular vertices of type $2^6$ with $\kempetriangle$
    Kempe-specification is even.
  \end{itemize}
\end{proposition}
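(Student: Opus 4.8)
The plan is to fix a coloring $c$ of $F$ and to read off two $\Z/2$-valued monodromies along the circle $C$. Since no defect line meets $C$, the color of $f$ is a single $2$-element subset all along $C$, and after applying a permutation in $S_4$ (which permutes colorings) I may assume it is $\{1,2\}$. At each singular vertex I will use the cone-over-$K_4$ local model: $f$ is the cone over one thick edge $uv$ of $K_4$, the four seams are the cones over the four vertices of $K_4$, and $C$ enters and leaves the vertex along the two seams $u,v$ bounding $f$; the seam at a $K_4$-vertex $w$ has type $112$ or $222$ according to whether one or three of the edges at $w$ are thick. Using this dictionary I would write down, for each of the three vertex types, the possible local colorings and the data that $C$ meets on its two incident arcs.

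First I would track the seam type of the arcs: a $112$ arc carries two thin facets $\{1\},\{2\}$, while a $222$ arc carries two thick facets $\{1,t\},\{2,t\}$ with $t\in\{3,4\}$. A short inspection of the local colorings shows that the seam type is unchanged across a $1^42^2$ vertex (both arcs $112$) and across a $2^6$ vertex (both arcs $222$), and is switched across a $1^32^3$ vertex (the $222$ seam sits at the trivalent end, the $112$ seam at the other). Because the seam type is a function on the cyclically ordered arcs of $C$, the number of switches is even, which is precisely the statement that the number of $1^32^3$ vertices along $C$ is even.

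Second I would use the closed surface $F_{12}$ of regions whose color meets $\{1,2\}$ in exactly one element, which is orientable because $F\subset\R^3$. On each arc one of the two facets adjacent to $f$ contains $1$ and the other contains $2$, so $C$ sits in $F_{12}$ as an embedded circle with a preferred ``$1$-side'', i.e.\ a co-orientation; orientability makes $C$ two-sided, so the $1$-side may switch only an even number of times around $C$. Near a vertex $F_{12}$ is the cone over the $4$-cycle formed by the four edges of $K_4$ incident to $u$ or $v$ other than $uv$, hence a disk in which $C$ is a diameter, and the $1$-side switches exactly when the two facets lying on a fixed side of this diameter carry different elements of $\{1,2\}$. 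Checking the local colorings, this never occurs at a $1^32^3$ vertex, always occurs at a $1^42^2$ vertex, and at a $2^6$ vertex occurs for exactly one of the two Kempe classes of its link — for the two model colorings, the one omitting a pigment switches the $1$-side and the one using all four does not. After identifying the switching class with the $\kempetriangle$ specification, the number of switches equals the number of $1^42^2$ vertices plus the number of $\kempetriangle$-specified $2^6$ vertices, which is therefore even.

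The hard part will be the $2^6$ vertex: I must enumerate the colorings of the all-thick $K_4$ link, confirm that they fall into the two Kempe classes recorded by the Kempe-specification, and check that the $1$-side switch is constant on each class, so that it is governed by $\kempetriangle$ versus $\kempesquare$ alone; the remaining work consists of finite verifications in the three local colorings together with the two global inputs, constancy of the color of $f$ along $C$ and orientability of $F_{12}$. Finally, writing $p,q,r_{\kempetriangle},r_{\kempesquare}$ for the numbers along $C$ of $1^32^3$, $1^42^2$, and $\kempetriangle$- and $\kempesquare$-specified $2^6$ vertices, the odd-vertex hypothesis enters through $p+q+r_{\kempetriangle}+r_{\kempesquare}$ being odd, which with the two parities above forces $r_{\kempesquare}$ to be odd.
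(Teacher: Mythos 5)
Your proof is correct and follows essentially the same route as the paper's: both track two $\Z/2$-valued quantities along $C$ --- the common thickness (equivalently, seam type) of the two side facets, and which side of $C$ carries the pigment $1$ --- using the orientability of $F_{12}$ coming from the embedding in $\R^3$ to make ``sides'' globally well defined, and then identify by local inspection that the first quantity flips exactly at $1^32^3$ vertices and the second exactly at $1^42^2$ vertices and $\kempetriangle$-specified $2^6$ vertices. Your local-coloring checks at the three vertex types (including the $\kempetriangle$ versus $\kempesquare$ dichotomy) agree with the paper, which phrases the same argument via the $I$-bundle neighborhood of $C$ being an annulus contained in $F_{12}$.
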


\begin{proof}
  Consider the regular neighborhood of $C$ intersected with the
  facets of $F$ which are not $f$ and which are adjacent to the seams
  included in $C$. It is an I-bundle $A$ over the circle, so in principle it could be either an annulus or a Möbius band. From a local coloring analysis,
  one obtains that this $I$-bundle is contained in
  $F_{1,2}$, so it has to be an annulus. Hence if we
  orient the circle we can speak of the right-hand and left-hand facets.

  The thicknesses of the regions conforming $A$ may be $1$ or $2$. At any
  given point of $C$ which is not a singular vertex, the thicknesses
  on the right-hand and left-hand facets is the same. This thickness
  changes along $C$ when encountering a singular vertex of type
  $1^32^3$. This proves the first statement.
  
  Regardless of their thicknesses, at each non-singular point of the
  circle, either the coloring of the right-hand facet or that of the
  left-hand facet (but not both) contains the pigment \colori. This
  arrangement changes exactly when encountering a vertex of type
  $1^42^2$ or $2^6$ with $\kempetriangle$ Kempe-specification.
  \end{proof}

\begin{definition}\label{def:degreeF}
  The degree $\deg(F)$ of a foam $F$ is an integer which is computed
  as follows: 
  \begin{align*}
    \deg(F) &= \sum_{\text{dots on $F$}} \deg(\text{polynomial
              labeling the dot}) -3 \sum_{\substack{\text{$f$ facet,}  \\ 
              \text{$\thickness(f) =1$}}} {\chi(f)}
-4 \sum_{\substack{\text{$f$ facet,}  \\ 
              \text{$\thickness(f) =2$}}} {\chi(f)}     \\ & + 5 \#  \{ \text{seam intervals of
    type $112$} \} \notag \\ & + 6 \# \{ \text{seam intervals of type
                         $222$}\} \notag
    \\ & - 6\# \{ \text{singular vertex of type $2^6$ and of type $1^32^3$}\} \notag
    \\ & - 5\# \{ \text{singular vertex of type $1^42^2$ with even number of
    defect lines going through}\} \notag
    \\ &- 6\# \{ \text{singular vertex of type $1^42^2$ with odd
number of defect lines going through}\}. \notag
  \end{align*}
\end{definition}

Recall that degrees are in variables that have degree $2$, so that the
first term in the definition of $\deg(F)$ is even. The above
definition is designed so that the following lemma holds, which gives
a convenient way to compute the degree of many foams.

\begin{lemma}\label{lem:degree}
Suppose that  a foam $F$ carries no dots 
and admits a  coloring $c$. Then:
\begin{align} \label{eq:degfromsurfaces}
\deg(F)= - \sum_{1\le i<j\le 4}\chi(F_{ij}(c)).
\end{align}
In particular, since $F$ is embedded in $\R^3$, if it admits a coloring then $\deg(F) \in 2\Z$. 

\end{lemma}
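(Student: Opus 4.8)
The plan is to prove the equivalent statement $\sum_{1\le i<j\le 4}\chi(F_{ij}(c))=-\deg(F)$ by a single application of the additivity of the compactly supported Euler characteristic $\chi_c$ over a convenient stratification of $F$, matching the two sides against the coefficients of Definition~\ref{def:degreeF} term by term (the dot sum there vanishes since $F$ is dotless). I would stratify $F$ into its open facet interiors $f^{\circ}$ (each carrying whatever defect curves lie on it), its open seam edges, and its singular vertices. Each $F_{ij}(c)$ is a closed surface equal to the union of the closures of the strata it meets, and one checks that every stratum is either entirely contained in or entirely disjoint from $F_{ij}(c)$; hence additivity gives
\begin{equation*}
\sum_{1\le i<j\le 4}\chi(F_{ij}(c))=\sum_{S}\chi_c(S)\,n(S),\qquad n(S):=\#\{\,(i,j)\mid S\subseteq F_{ij}(c)\,\},
\end{equation*}
where $\chi_c(f^{\circ})=\chi(f)$ on a facet, $\chi_c=-1$ on an open seam edge, and $\chi_c=+1$ on a vertex. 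The whole proof then reduces to computing the multiplicities $n(S)$ from the coloring rules of Definition~\ref{def:colfoam}.

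The bulk terms are immediate. A thin facet has a singleton color $\{p\}$ and lies in $F_{ij}(c)$ exactly for the three pairs $\{i,j\}\ni p$, so $n=3$; a thick facet has a $2$-element color and lies in $F_{ij}(c)$ for the four pairs meeting it in one element, so $n=4$. Crucially, across a defect the two colors are complementary (the defect rule of Definition~\ref{def:colfoam}), and a $2$-subset lies in $F_{ij}(c)$ if and only if its complement does; hence every connected facet is in or out of $F_{ij}(c)$ as a unit for each pair, and a defect curve drawn in $f^{\circ}$ leaves $\chi_c(f^{\circ})=\chi(f)$ unchanged. This is precisely why defect lines need no strata of their own and enter Definition~\ref{def:degreeF} only through the vertices they pass through; the facet strata then contribute exactly $3\sum_{\thickness(f)=1}\chi(f)+4\sum_{\thickness(f)=2}\chi(f)$. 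A short check of the three facets meeting a seam (using the $112$ and $222$ rules) shows that at a $112$-seam exactly one of the six pairs has all three adjacent facets outside $F_{ij}(c)$, giving $n=5$, while at a $222$-seam every pair has an adjacent facet inside, giving $n=6$; with $\chi_c=-1$ this yields $-5\,\#\{112\}-6\,\#\{222\}$, matching the seam coefficients.

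It remains to evaluate $n(v)$ at the singular vertices, where $\chi_c(v)=1$, and this is the only delicate point. For a $2^6$ or a $1^32^3$ vertex one reads off from the local colorings (Figure~\ref{fig:KempePattern} together with the seam rules) that for every one of the six pairs some adjacent region lies in $F_{ij}(c)$, so $n(v)=6$, matching the coefficient $6$ in both cases. For a $1^42^2$ vertex the two thick sheets and four thin sheets, in the absence of defects, force a local coloring in only two pigments $p,q$, and a direct check gives $v\in F_{ij}(c)$ for all pairs except $\{p,q\}^{c}$, so $n(v)=5$. The heart of the argument is to track a defect line passing through such a vertex: since crossing a defect complements the color, each strand through $v$ flips the local color pattern on the two thick sheets it separates, so that an odd number of strands makes all four pigments occur among the sheets adjacent to $v$ and forces $n(v)=6$, whereas an even number restores $n(v)=5$. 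This parity dependence is exactly the even/odd dichotomy of Definition~\ref{def:degreeF}, and carrying out this local coloring bookkeeping consistently — including the verification that defect lines meet the seam graph only at $1^42^2$ vertices, as their general position with respect to seams and the shape of Definition~\ref{def:degreeF} dictate — is the main obstacle; everything else is direct tabulation. Assembling the contributions yields $\sum_{i<j}\chi(F_{ij}(c))=-\deg(F)$. The final assertion is then immediate: since $F\subset\R^3$, each $F_{ij}(c)$ is an orientable closed surface, so each $\chi(F_{ij}(c))$ is even and hence $\deg(F)\in 2\Z$.
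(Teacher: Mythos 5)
Your proof is correct and follows essentially the same route as the paper: the paper's proof is exactly the ``elementary but cumbersome inspection'' of how many bicolored surfaces $F_{ij}(c)$ each local model lies in, matched against the coefficients of Definition~\ref{def:degreeF} (the paper only records the $112$-seam case, where the multiplicity is $5$). Your organization of that inspection via additivity of $\chi_c$ over the stratification, and your case analysis --- including the two-pigment local coloring at a $1^42^2$ vertex and its flip to four pigments under an odd number of defect strands --- is a correct and complete write-up of the argument the paper leaves to the reader.
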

\begin{proof}
  The proof is an elementary but cumbersome inspection of different
  bicolored surfaces $F_{ij}$ of each local model for any (local)
  coloring. For instance, a singular seam interval of type $112$ whose adjacent
  regions are colored by, say,
  $\{1\}$, $\{2\}$ and $\{1,2\}$, appears in exactly
  $5$ bicolored surfaces: $F_{12}, F_{13}, F_{14}, F_{23}$ and $F_{24}$. This fits with the first coefficient $5$
  in the formula of Definition \ref{def:degreeF}.
\end{proof}

Using the same arguments as in \cite{RobWag} or \cite{KhovRob1},  one obtains the following result. 

\begin{proposition}
    If $F$ is a foam of degree $n$, then $\eval[F]$ is an element of $\ourring$ of degree $n$. In particular, if $\deg(F)<0$, then $\eval[F]=0$.
\end{proposition}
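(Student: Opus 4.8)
The plan is to prove that $\eval[F] \in R$ is homogeneous of degree $n = \deg(F)$, working over the larger ring $R'$ where the $X_i + X_j$ are invertible. The key point is that each individual colored contribution $\eval[F,c] = P(F,c)/Q(F,c)$ is a homogeneous rational function, and I must show (i) that each has degree exactly $\deg(F)$, and (ii) that the sum over all colorings $\sum_c \eval[F,c]$, which \emph{a priori} lies only in $R'$, in fact lands in $R$, i.e.\ has no genuine denominators.

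\medskip

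\noindent\textbf{Step 1: Degree of each colored summand.} First I would compute $\deg \eval[F,c]$ for a fixed coloring $c$. The numerator $P(F,c)$ is a product of colored evaluations of dots, so its degree is $\sum_{\text{dots}} \deg(\text{polynomial})$, matching the first term in Definition~\ref{def:degreeF}. The denominator $Q(F,c) = \prod_{i<j}(X_i+X_j)^{\chi(F_{ij})/2}$ has degree $2\sum_{i<j}\chi(F_{ij})/2 = \sum_{i<j}\chi(F_{ij})$, since each $X_i+X_j$ has degree $2$. Thus
\[
\deg \eval[F,c] = \sum_{\text{dots}} \deg(\text{polynomial}) - \sum_{i<j}\chi\bigl(F_{ij}(c)\bigr).
\]
For a dotless foam, Lemma~\ref{lem:degree} identifies the right-hand side with $\deg(F)$; in the presence of dots the extra dot-degree contributions appear on both sides, so the identity $\deg\eval[F,c] = \deg(F)$ persists. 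Hence every summand is homogeneous of degree exactly $n$, and so is their sum.

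\medskip

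\noindent\textbf{Step 2: Integrality of the total evaluation.} The harder part is showing $\eval[F] \in R$ rather than merely $R'$. This is where I would invoke ``the same arguments as in \cite{RobWag} or \cite{KhovRob1}.'' The strategy is to control the possible poles: a factor $X_i + X_j$ can only divide some denominator, so it suffices to show that $\eval[F]$, viewed as a symmetric rational function, has no pole along the divisor $X_i + X_j = 0$ for each pair $i<j$. The standard device is to examine what happens when one specializes $X_i \to X_j$ (equivalently, passes to the localization at the prime $(X_i + X_j)$): colorings pair up via the transposition $(ij) \in S_4$, and using the equivariance $\eval[F, \sigma\cdot c] = \sigma\cdot\eval[F,c]$ together with the fact that $F_{ij}(c)$ is unchanged under $(ij)$ (so that the offending power of $X_i+X_j$ in the denominator is the same for $c$ and $(ij)\cdot c$), one shows the potential pole cancels between paired colorings or is removable. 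Concretely, one argues that the order of the pole of $\eval[F]$ along $X_i + X_j$ is nonnegative by a residue/symmetry computation: the sum $\sum_c \eval[F,c]$ is $S_4$-symmetric, hence lies in the symmetric-function subring, and a symmetric rational function whose only possible poles are along the $S_n$-orbit of $X_i + X_j$ and which is regular after the pairing argument must be polynomial. Once $\eval[F]\in R$ and it is homogeneous of degree $n$ by Step~1, the final assertion follows: if $n = \deg(F) < 0$ then the degree-$n$ graded piece of $R = \ftwo[E_1,\dots,E_4]$ is zero (all generators have positive even degree), forcing $\eval[F] = 0$.

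\medskip

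\noindent\textbf{Main obstacle.} I expect Step~2 --- the integrality, i.e.\ removing the apparent denominators $X_i + X_j$ --- to be the crux. Step~1 is a bookkeeping matching of Definition~\ref{def:degreeF} against the Euler-characteristic formula already packaged in Lemma~\ref{lem:degree}. The substance lies in the pole-cancellation argument, which requires understanding precisely how colorings are organized under the transposition $(ij)$ and verifying that the Euler characteristic $\chi(F_{ij})$ governing the denominator behaves compatibly with this involution, so that residues along $X_i+X_j = 0$ sum to zero. This is exactly the technical heart carried over from \cite{RobWag} and \cite{KhovRob1}, and adapting it to the $SL(4)$ setting --- in particular handling the thick facets, defect lines, and the $2^6$ vertices with their Kempe-specifications --- is where the real work (and the appeal to the cited analogous computations) would go.
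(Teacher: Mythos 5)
The paper offers no proof of this proposition at all: it simply states that "the same arguments as in \cite{RobWag} or \cite{KhovRob1}" apply, so your reconstruction of those arguments is, in outline, exactly the intended route. Your Step~1 is correct and complete: homogeneity of each summand $\eval[F,c]$ of degree $\deg(F)$ follows from matching Definition~\ref{def:degreeF} against the Euler-characteristic formula of Lemma~\ref{lem:degree}, with dots contributing equally to both sides, and the vanishing for $\deg(F)<0$ then follows since $R$ is non-negatively graded.

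The one place where your sketch of Step~2, taken literally, would not work is the pairing mechanism. Pairing $c$ with the globally transposed coloring $(ij)\cdot c$ gives $\eval[F,c]+\eval[F,(ij)\cdot c]=\bigl(A+(ij)\cdot A\bigr)/(X_i+X_j)^k$ with $A$ regular along $X_i+X_j=0$, and in characteristic two the numerator $A+(ij)\cdot A$ vanishes on that divisor, hence is divisible by $X_i+X_j$ --- but only \emph{once}. Since the pole order $k=\chi(F_{ij}(c))/2$ can be arbitrarily large (one unit for each sphere component of $F_{ij}(c)$), the global transposition cannot remove poles of order $\ge 2$. The argument actually used in \cite{RobWag} and \cite{KhovRob1} replaces the global swap by the finer involution that exchanges the pigments $i$ and $j$ only on the regions meeting a \emph{single chosen connected component} of $F_{ij}(c)$ (a Kempe-type move on the bicolored surface); this preserves $F_{ij}$, partitions the colorings into pairs for each sphere component in turn, and lowers the pole order by one at each stage, so that iterating over components kills the pole entirely. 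You correctly flag that this is the technical heart imported from the cited papers, so the gap is one of precision rather than of strategy, but the involution you name is not the one that does the job.
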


\begin{example}
  Closed surfaces (of thickness 1 or 2) with or without defect lines are a special case of foams, the simplest ones in a sense. Let us
  compute $\eval[F]$ for $F=\SS_1(\bullet^a))$, the evaluation of the
  sphere of thickness one, with no defect lines and $a$ $\bullet$ on
  it. In this case there is only one region and therefore $F$ has four colorings in bijection with elements of $\pigments$. Let us denote them
  $c_i, \, i \in \pigments$. One has:
  \begin{equation*}
    \eval[F, c_i] = \frac{X_i^a}{\displaystyle\prod_{j\neq i}(X_i+X_j)},
  \end{equation*}
  so that:
  \begin{equation*}
  \eval[F]=
    \displaystyle\sum_{i} \frac{X_i^a}{\displaystyle\prod_{j\neq i}(X_i+X_j)}
    =\frac{\displaystyle\sum_{i} X_i^a \displaystyle\prod_{j,k\neq i, j < k}(X_j+X_k)}{ \displaystyle\prod_{j< k} (X_j+X_k)} 
    = H_{a-3},
  \end{equation*}
  where $H_{d}$ is the complete symmetric polynomial 
  \begin{equation*} H_d=\displaystyle\sum_{t_1+t_2+t_3+t_4=d} X_1^{t_1} X_2^{t_2}X_3^{t_3}X_4^{t_4},
  \end{equation*}
  and $H_d=0$ if $d<0$.  
  \end{example}
  
  \begin{example}
  Consider $F=\SS_2(s_\lambda)$, the sphere of thickness 2, with no defect
  lines and a decoration by the Schur polynomial $s_\lambda$. As
  before, there is only one region and 
  $\left({\begin{smallmatrix} 4 \\ 2 \end{smallmatrix}}\right)= 6 $ colorings, in bijection with (unordered) pairs of elements of
  $\pigments$. Let $c_{ij}$ be one of such colorings, with $i<j$. One has:
  \begin{equation*}
    \eval[F, c_{ij}] = \frac{s_{\lambda}(X_i, X_j)}{(X_i+X_k)(X_i+X_\ell)(X_j+X_k)(X_j+X_\ell)}
  \end{equation*}
  with $\{i,j,k,\ell\} = \pigments$, so that:
  \begin{equation*}
    \eval[F] = s_{\lambda\setminus \rho(2,2)}
  \end{equation*}
  where $s_{\lambda\setminus \rho(2,2)}$ is the Schur polynomial
  associated with the Young diagram $\lambda$ with the two boxes of
  each row removed if possible (if $\lambda_i\ge 2\, \, \forall i$). If $\lambda_i=1$ for some $i$, we set $s_{\lambda\setminus \rho(2,2)}=0$.
\end{example}

\begin{example}
Let $F$ be the product of a web $\Gamma$ with the circle $\SS^1$ and no
decoration. Colorings of $\Gamma$ and $F$ are in obvious one-to-one correspondence. Let $c$ be a coloring of $F$. For any $i,j\in \pigments,$ $i<j$,
the surface $F_{ij}$ is a torus, so that:
\begin{equation*}
  \label{eq:1}
  \eval[F,c] = 1.
\end{equation*}
and since we work in characteristic $2$, we get
\begin{equation*}
  \label{eq:2}
  \eval[F,c] = \ncol(\Gamma) = \begin{cases}
      1 & \text{if} \ \Gamma =\emptyset_1, \\ 
      0 & \text{otherwise}.
  \end{cases}
\end{equation*}
\end{example}

The following relation, whose proof is by inspection, implies that Kempe-specifications of $2^6$-singular vertices can be described via foams without such specifications.

\begin{lemma}\label{lemma_rel_3}
  The following relations hold: 
  \begin{align}
    \eval[{\NB{\tikz[font= \tiny, scale=0.8]{\input{\imagesfolder/4v_singvertex222222triangle}}}}] &=
    \eval[{\NB{\tikz[font= \tiny, scale=0.8]{\input{\imagesfolder/4v_singvertex222222bubble}}}}] \\                      \eval[{\NB{\tikz[font= \tiny, scale=0.8]{\input{\imagesfolder/4v_singvertex222222square}}}}] &=
    \eval[{\NB{\tikz[font= \tiny, scale=0.8]{\input{\imagesfolder/4v_singvertex222222bubble_defect}}}}]
\end{align}
\end{lemma}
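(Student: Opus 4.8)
The plan is to treat each relation as a purely local replacement inside an arbitrary closed foam and to verify the equality of evaluations summand by summand over colorings. Concretely, I fix a ball $B$ in which one foam carries a $2^6$ singular vertex with $\kempetriangle$ (respectively $\kempesquare$) Kempe-specification and the other carries the corresponding bubble, the two foams agreeing outside $B$. Since $\eval$ is defined as a sum over colorings of the summands $P(F,c)/Q(F,c)$, it suffices to produce a bijection between the colorings of the two sides that restricts to the identity outside $B$ and preserves each summand. I would first set up this coloring bijection, then match the summands, and only at the end carry out the Euler-characteristic bookkeeping, which I expect to be the real work.

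For the coloring bijection, restricting a coloring to $\partial B$ yields a coloring of the common link. On the $2^6$-vertex side, the admissible local colorings are exactly those in the Kempe-class singled out by the specification, as recorded in Figure~\ref{fig:KempePattern}; the content of that figure is precisely that the cone over $K_4$ with six double edges has its colorings partitioned into the two Kempe-classes $\kempetriangle$ and $\kempesquare$. On the bubble side, constraints (\ref{it:222-foam}) and (\ref{it:defect-foam}) at the seams, and for the second relation at the defect line, force the local coloring to lie in one prescribed class, each exterior coloring extending in exactly one way. Thus restriction to the exterior is a bijection $\colorings(\text{left}) \to \colorings(\text{right})$. The role of the defect in the second relation is exactly to reverse the coloring across a thick facet, replacing the $\kempetriangle$ pattern by the $\kempesquare$ pattern, which is what lets the same bubble realize the other Kempe-class.

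For matched colorings $c \leftrightarrow c'$ I must check $P(F,c)/Q(F,c) = P(F',c')/Q(F',c')$. All dots lie outside $B$ and the colorings agree there, so $P(F,c)=P(F',c')$. Since a single summand is a monomial in the factors $(X_i+X_j)^{\pm 1}$ with no internal cancellation, equality of summands reduces to $\chi(F_{ij}(c)) = \chi(F'_{ij}(c'))$ for every pair $i<j$. As $F$ and $F'$ agree outside $B$, the bicolored surfaces $F_{ij}$ and $F'_{ij}$ agree there as well; by additivity of $\chi$ along the common curve $F_{ij}\cap\partial B$, the difference $\chi(F_{ij})-\chi(F'_{ij})$ equals the difference of the local contributions $\chi(F_{ij}\cap B) - \chi(F'_{ij}\cap B)$. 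For each of the six pairs and each admissible local coloring, this local piece is a disk, an annulus, or empty, and its contribution is read directly off the coloring pattern.

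The main obstacle is precisely this last step: checking that the local Euler-characteristic contributions of the $2^6$-cone and of the bubble agree across all six surfaces $F_{ij}$ and for every coloring in the relevant Kempe-class. I would tame the case count by invoking the $S_4$-equivariance $\eval[F,\sigma\cdot c]=\sigma\cdot\eval[F,c]$, which collapses the colorings into a few orbits, and by using the identity $\deg(F) = -\sum_{i<j}\chi(F_{ij}(c))$ of Lemma~\ref{lem:degree} as a consistency check: matching total degree is necessary, and once the per-pair contributions are tabulated for one representative of each orbit, the remaining cases follow by symmetry. The second relation is then handled by the same tabulation, with the defect line shifting which Kempe-class is realized and altering the relevant $\chi(F_{ij})$ by exactly the amount contributed by the bubble-with-defect.
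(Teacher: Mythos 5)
The paper gives no argument for this lemma beyond the phrase ``whose proof is by inspection,'' and the inspection it has in mind is exactly the template it spells out for relation~\eqref{eq_rel_2}: match colorings of the two sides via restriction to the common exterior, note that decorations are unaffected, and compare the local contributions to $\chi(F_{ij}(c))$ for each bicolored surface. Your proposal reproduces this approach faithfully (including the correct observation that the defect line is what swaps the realized Kempe-class, matching Figure~\ref{fig:KempePattern}), so it is essentially the same proof, just written out more explicitly than the paper bothers to.
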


\subsection{Decomposition relations}
\label{sec:neck-cutt-relat} 
In this section skein relations on foams are determined. They lift $t_4$ web evaluation relations of Proposition~\ref{prop:relations} (save the relation~\eqref{eq_sq5}) to isomorphisms of state spaces, as will be shown in Theorem~\ref{thm_dir_sum_rels}.

\begin{lemma}\label{lemma_rel_0}
  The following relations hold:
  \begin{align}
  \label{eq_rel_0}
    \eval[{\NB{\tikz[scale=1.2]{\begin{scope}
  \draw (0,1.2) circle (0.4 and 0.2);
  \draw[densely dotted] (0.4,0) arc (0: 180: 0.4 and 0.2);
  \draw (0.4,0) arc (0: -180: 0.4 and 0.2); 
  \draw (0.4, 0) -- +(0,1.2);
  \draw (-0.4, 0) -- +(0,1.2);
\end{scope}
}}}] &= \sum_{a+b+d=3} E_{d}\cdot
    \eval[{\NB{\tikz[scale=1.2]{\input{\imagesfolder/4v_cupcupdotted}}}}]=\sum_{a=0}^3 \left( \sum_{b+d =3-a} E_{d}\cdot
    \eval[{\NB{\tikz[scale=1.2]{\input{\imagesfolder/4v_cupcupdotted}}}}]\right) \\
    \label{eq_rel_1}
   \eval[{\NB{\tikz[scale=1.2]{\input{\imagesfolder/4v_tube2}}}}] &=\sum_{\alpha\in \YD(2,2)} 
    \eval[{\NB{\tikz[scale=1.2]{\input{\imagesfolder/4v_cupcupdotted2}}}}] 
  \end{align}
  In the second relation $\alpha\mapsto\alpha^{\star}$ is the involution on the set $\YD(2,2)$ 
  of six Young diagrams with at most two rows and at most two columns determined by:
  \[
    \emptyset^\star = \NB{\tikz{\draw (0,0) rectangle (0.6,0.6); \draw
      (0.3,0) -- (0.3, 0.6); \draw (0, 0.3) -- (0.6, 0.3);}}\qquad\qquad
    \NB{\tikz{\draw (0,0) rectangle (0.3,0.3);}}^\star = \NB{\tikz{\draw (0,0) rectangle (0.3,0.3); \draw
      (0.3,0) rectangle (0.6, 0.3); \draw (0, 0.3) rectangle (0.3, 0.6);}}\qquad\qquad
      \NB{\tikz{\draw (0,0) rectangle (0.3,0.3); \draw (0.3,0) rectangle (0.6,0.3); }}^\star = \NB{\tikz{\draw (0,0) rectangle (0.3,0.3); \draw
      (0,0.3) rectangle (0.3, 0.6);}}.
  \]  
  The six terms on the RHS (right-hand side) of equation~\eqref{eq_rel_1} are mutually-orthogonal idempotents $e_{\alpha}$. In particular, evaluating a thick sphere that appears in the composition $e_{\alpha}e_{\beta}$ yields $\delta_{\alpha,\beta}$. 
  The four terms on the RHS of~\eqref{eq_rel_0} are mutually-orthogonal idempotents $e(a)$, $0\le a\le 3$, and the corresponding sums of thin sphere evaluations in $e(a)e(b)$ yields $\delta_{a,b}$.  
\end{lemma}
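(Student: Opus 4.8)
The plan is to read both identities as instances of the standard ``resolution of the identity'' (neck-cutting) move: the cylinder in \eqref{eq_rel_0} (resp.\ \eqref{eq_rel_1}) is the identity endomorphism of a thin (resp.\ thick) facet, and the right-hand side exhibits it as a sum of endomorphisms each factoring through the empty web. I would establish each relation in two stages. First, prove it as a local identity of foam evaluations: fix an arbitrary closure, expand both sides as sums over colorings, and group the colorings by their restriction to the part of the foam away from the modified ball. For a fixed ``outer'' coloring the two local contributions differ only through the decorations carried by the inserted caps and through the Euler characteristics of the bicolored surfaces $F_{ij}$, which change in a controlled way when a neck is cut. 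Second, deduce the orthogonal-idempotent statement by composing two summands: the composition produces a single sphere in the middle whose evaluation is a scalar, and one checks that scalar is $\delta_{a,b}$ (resp.\ $\delta_{\alpha,\beta}$).

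For the thin relation \eqref{eq_rel_0} the bookkeeping is immediate: a thin facet colored $\{i\}$ lies in $F_{ij}$ for the three indices $j\neq i$, and cutting a neck raises each $\chi(F_{ij})$ by $2$, so $Q$ gains the factor $\prod_{j\neq i}(X_i+X_j)$. Writing $f(X)=\prod_{m}(X+X_m)$, in characteristic $2$ one has $\prod_{j\neq i}(X_i+X_j)=f'(X_i)=E_1X_i^2+E_3$, while the numerator produced by the right-hand side, when both caps receive the color $i$, is $\sum_{a+b+d=3}E_d X_i^{a+b}=E_3+E_1X_i^2$; the two agree, so the cut terms rebuild the cylinder. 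When the neck separates and the caps may receive distinct colors $i\neq i'$, the corresponding numerator $\sum_{a+b+d=3}E_dX_i^aX_{i'}^b$ is the coefficient of $t^3$ in $\prod_m(1+X_mt)/\bigl((1+X_it)(1+X_{i'}t)\bigr)=(1+X_kt)(1+X_lt)$, hence vanishes, so no spurious terms survive. Orthogonality of the $e(a)$ then follows from the sphere evaluation $\eval[\SS_1(\bullet^{a+b})]=H_{a+b-3}$: the scalar occurring in $e(a)e(a')$ is $\sum_{b'+d'=3-a'}E_{d'}H_{a+b'-3}=\sum_{j\ge 0}E_jH_{(a-a')-j}=\delta_{a,a'}$, by the Newton identity $\sum_j E_jH_{n-j}=\delta_{n,0}$ (applicable since $3-a'\ge a-a'$).

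The thick relation \eqref{eq_rel_1} runs along the same lines, the only genuinely new input being the symmetric-function identity replacing the computation above. Here a thick facet colored $\{i,j\}$ lies in the four surfaces $F_{ik},F_{il},F_{jk},F_{jl}$, so cutting the neck multiplies $Q$ by $(X_i+X_k)(X_i+X_l)(X_j+X_k)(X_j+X_l)$. The key point is that the two caps produced by the cut carry \emph{complementary} colors (this is the defect visible in the inserted piece), so the numerator built from the right-hand side is $\sum_{\alpha\in\YD(2,2)}s_\alpha(X_i,X_j)\,s_{\alpha^{\star}}(X_k,X_l)$ with $\{k,l\}=\pigments\setminus\{i,j\}$. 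I would prove the Cauchy-type identity
\begin{equation*}
\sum_{\alpha\in\YD(2,2)}s_\alpha(X_i,X_j)\,s_{\alpha^{\star}}(X_k,X_l)=(X_i+X_k)(X_i+X_l)(X_j+X_k)(X_j+X_l),
\end{equation*}
which is exactly the Euler factor above and shows the cut terms rebuild the thick cylinder. The involution $\alpha\mapsto\alpha^{\star}$ is precisely the device making $\{s_\alpha\}$ and $\{s_{\alpha^{\star}}\}$ dual bases for the pairing attached to the ``defected'' thick sphere, and orthogonality of the $e_\alpha$ reduces to $\langle s_{\alpha^{\star}},s_\beta\rangle=\delta_{\alpha,\beta}$ for that pairing; the case $\beta=\emptyset$ already recovers the sphere formula $\eval[\SS_2(s_\mu)]=s_{\mu\setminus\rho(2,2)}$, since summing over ordered complementary pairs places $s_\mu$ on each of the six color-pairs exactly once.

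The main obstacle I expect is entirely in the thick case, and it is twofold. Geometrically, one must verify that cutting a thick neck genuinely forces the two caps to be complementarily colored, i.e.\ that the inserted piece carries a defect, since it is this feature that prevents the characteristic-$2$ cancellation that would occur were the caps to share a color. Combinatorially, one must verify the displayed Cauchy identity and, more delicately, the full duality $\langle s_{\alpha^{\star}},s_\beta\rangle=\delta_{\alpha,\beta}$ for \emph{all} $\beta$ rather than only the easy case $\beta=\emptyset$; this is a Littlewood--Richardson and skew-Schur computation in two variables over $\ftwo$, where the interplay between conjugation of partitions and the complementation $\alpha\mapsto\alpha^{\star}$ must be tracked carefully. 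Finally, since the $\eval$ identities are local, I would note that proving them for every closure is exactly what promotes them to relations in $\catFoam$ via the universal construction, so no separate state-space argument is needed.
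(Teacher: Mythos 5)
Your proposal is correct and follows essentially the same route as the paper, which only sketches this proof as ``case by case inspection of the possible colorings \ldots{} and manipulation of symmetric polynomials'' modeled on the included proof of relation~\eqref{eq_rel_2}: you group colorings of the cut foam by whether the two caps match the tube, show the mismatched ones contribute zero and the matched ones reproduce the $Q$-correction, and derive orthogonality from the sphere evaluations. Your supporting identities check out --- $\prod_{j\neq i}(X_i+X_j)=E_1X_i^2+E_3$ in characteristic two, the generating-function vanishing for mismatched thin colors, the Newton identity $\sum_j E_jH_{n-j}=\delta_{n,0}$, and the dual Cauchy identity $\sum_{\alpha}s_\alpha(X_i,X_j)s_{\alpha^\star}(X_k,X_l)=(X_i+X_k)(X_i+X_l)(X_j+X_k)(X_j+X_l)$, whose involution $\alpha\mapsto\widehat{\alpha}'$ is exactly the $\star$ of the statement and which simultaneously handles the mismatched thick colorings, since a repeated pigment forces a factor $X_a+X_a=0$.
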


The proof of these statements (and of the other lemmas of this
section) relies on a case by case inspection of the possible colorings
of the foams involved and some sometimes cumbersome but simple
manipulation of symmetric polynomials. They are very similar to proofs
of analogous relations in \cite{RobWag} or \cite{KhovRob1}. We include
such a proof for relation~\eqref{eq_rel_2}.

\begin{lemma}\label{lemma_rel_1}
  The following relations hold (compare to Proposition \ref{prop:relations} \eqref{eq_digon_1} and \textnormal{(\ref{eq_digon_2}a)}):
  \begin{align}
  \label{eq_rel_2}
    \eval[{\NB{\tikz[font= \tiny, scale=0.8]{\input{\imagesfolder/4v_digon211I}}}}] &=
    \eval[{\NB{\tikz[font= \tiny, scale=0.8]{\input{\imagesfolder/4v_digon211cup1}}}}] +
    \eval[{\NB{\tikz[font= \tiny, scale=0.8]{\input{\imagesfolder/4v_digon211cup2}}}}] \\
    \label{eq_rel_3}
    \eval[{\NB{\tikz[font= \tiny, scale=0.8]{\begin{scope}
    \fill[fill = gray, fill opacity =0.3] (0,-2) -- (0.5, -2) .. controls +(0.3,0.3) and +(-0.3, 0.3) .. (1.5,-2) -- (2,-2) -- (2,0) -- (1.5,0) .. controls +(-0.3,-0.3) and +(0.3, -0.3) .. (0.5,0) -- (0,0)-- cycle;
    \fill[fill = gray, fill opacity =0.3] (0.5,-2) .. controls
    +(0.3,-0.3) and +(-0.3, -0.3) .. (1.5,-2) -- (1.5,0) .. controls
    +(-0.3,0.3) and +(0.3, 0.3) .. (0.5,0) -- cycle;
    % \fill[fill = white, fill opacity =0.3] (0.5,-0.7) .. controls
    % +(0.3,-0.3) and +(-0.3, -0.3) .. (1.5,-0.7) -- (1.5,-1.3) .. controls
    % +(-0.3,0.3) and +(0.3, 0.3) .. (0.5,-1.3) -- cycle;
    
    % \fill[fill = white, fill opacity =1] (0.5,-0.7) .. controls
    % +(0.3,-0.3) and +(-0.3, -0.3) .. (1.5,-0.7) -- (1.5,-1.3) .. controls
    % +(-0.3,0.3) and +(0.3, 0.3) .. (0.5,-1.3) -- cycle;

%  \filldraw[fill = gray, fill opacity =0.3] (0,0) rectangle (0.5, -2);
%  \filldraw[fill = gray, fill opacity =0.3] (2,0) rectangle (1.5, -2);  
  \draw[double] (0,0) -- (0.5, 0);
  \draw[double] (1.5,0) -- (2, 0);
  \draw[double] (0.5,0) .. controls +(0.3,0.3) and +(-0.3, 0.3) .. (1.5,0);
  \draw[double] (0.5,0) .. controls +(0.3,-0.3) and +(-0.3, -0.3) .. (1.5,0);
  \draw[double] (0,-2) -- (0.5, -2);
  \draw[double] (1.5,-2) -- (2, -2);
  \draw[double, densely dotted] (0.5,-2) .. controls +(0.3,0.3) and +(-0.3, 0.3) .. (1.5,-2);
  \draw[double] (0.5,-2) .. controls +(0.3,-0.3) and +(-0.3, -0.3) .. (1.5,-2);
  \draw (2,0) -- +(0, -2);
  \draw (0,0) -- +(0, -2);
  \draw (1.5,0) -- +(0, -2);
  \draw (0.5,0) -- +(0, -2);
\end{scope}}}}] &=
    \eval[{\NB{\tikz[font= \tiny, scale=0.8]{\input{\imagesfolder/4v_digon222cup1}}}}] +
    \eval[{\NB{\tikz[font= \tiny, scale=0.8]{\input{\imagesfolder/4v_digon222cup2}}}}] \\                          \label{eq_rel_4}
    \eval[{\NB{\tikz[font= \tiny, scale=0.8]{\input{\imagesfolder/4v_digon121I}}}}] &= \sum_{a+b=2}
    \eval[{\NB{\tikz[font= \tiny, scale=0.8]{\input{\imagesfolder/4v_digon121cup1}}}}]             
  \end{align}
In these relations, all dots are on facets of thickness $1$ (horizontal facets in the last equation are thin). The three terms on the RHS of~\eqref{eq_rel_4} are mutually-orthogonal idempotents $e_a$, $0\le a\le 2$ (with $b=2-a$ in the formula), and the corresponding decorated bubbles with one thin and one thick facet floating in a thin region in $e_a e_{a'}$ reduce to  $\delta_{a,a'}$ times the thin region.  
\end{lemma}

\begin{proof}[Proof of relation~\eqref{eq_rel_2}]
  Denote $F, G_1$ and $G_2$ the three foams involved in
  \eqref{eq_rel_2} read from left to right. Observe that foams $G_1$ and $G_2$ are identical except for their decoration distributions. Denote
  $H$ the foam $G_1$ (or $G_2$) with the dot removed.
  
  Every coloring $c$ of $F$ induces a coloring of $H$. Colorings of $H$
  which do not induce colorings of $F$ are precisely those for which
  the digon on the top and on the bottom are colored in a symmetric
  way. Meaning in particular that for those colorings, the dot of
  $G_1$ and the dot of $G_2$ are on facets with the same color. Since
  the dot distribution is the only difference between $G_1$ and $G_2$,
  for such a coloring $c'$, we have: $\eval[G_1,c'] = \eval[G_2, c']$,
  so that 
  \begin{equation}\label{eq:nonmatching}
    \eval[G_1,c'] +\eval[G_2,c'] =0.
  \end{equation}

  Suppose now that $c$ is a coloring of $F$, denote $c$ the induced
  coloring for $H$. It induces the same coloring of the top and bottom
  digons which completely characterizes the local behaviour of $c$ for
  $F$ and $H$. To fix notation say that this digon coloring is as
  follows:
  \[
    \NB{\tikz[scale=1.5]{\begin{scope}
  %\draw[double] (-0.5,0) -- (-0.3,0);
  %\draw[double] ( 0.5,0) -- ( 0.3,0);
  \draw[expand style=\stylecoll] (-0.5,0) -- (-0.3,0) .. controls
  +(-60:0.3) and + (-120:0.3) .. node[below, pos=0.5,black] {$j$}
  (0.3,0)--(0.5,0) node[right,black] {$\{i,j\}$};
  \draw[yshift=0.05cm,expand style=\stylecoli] (-0.5,0) --  (-0.3,0) node[pos=0,
  left,black] {$\{i,j\}$}.. controls
  +(60:0.3) and + (120:0.3) .. (0.3,0) node[above, pos=0.5,black] {$i$}-- (0.5,0);
\end{scope}
%%% Local Variables:
%%% mode: latex
%%% TeX-master: t
%%% End:
}}
\]
  The only bicolored surface which is substantially different in
  $(F,c)$ and in $(H,c)$ is $\{i,j\}$-colored and $\chi(H_{ij}(c))=\chi(F_{ij}(c))+2$, so that:
  \begin{equation*}
    Q(G_1,c)=Q(G_2,c)=Q(H,c)=(X_i+X_j)Q(F,c).
  \end{equation*}
  The extra dots on $G_1$ and $G_2$ gives:
  \begin{equation*}
    P(G_1,c)= X_iP(H,c)=X_iP(F,c) \qquad .    P(G_2,c)= X_jP(H,c)=X_jP(F,c).
  \end{equation*}
  This implies:
  \begin{equation}\label{eq:matching}
    \eval[G_1,c] +\eval[G_2,c] = \frac{X_i}{X_i+X_j}\eval[F,c] +
    \frac{X_j}{X_i+X_j}\eval[F,c] = \eval[F,c].
  \end{equation}
  
  Summing over all colorings using  \eqref{eq:nonmatching} and
    \eqref{eq:matching} one obtains:
  \begin{equation*}
    \label{eq:5}
    \eval[F] = \eval[G_1]+ \eval[G_2].
  \end{equation*}
\end{proof}

\begin{lemma}\label{lemma_rel_4}
  The following relations hold (compare to Proposition \ref{prop:relations} \eqref{eq_defects}): 
  \begin{align}
  \label{eq_rel_A1}
    \eval[{\NB{\tikz[font= \tiny, scale=0.8]{\begin{scope}
  \filldraw[fill opacity=0.3, fill =gray] (0,0) rectangle (1,1);
  \draw[double] (0,0) -- +(1,0) coordinate[pos=0.3] (a0) coordinate[pos=0.7] (a1);
  \draw[double] (0,1) -- +(1,0) coordinate[pos=0.3] (a2)
  coordinate[pos=0.7] (a3);
  \draw[orange, thick] (a0) -- (a2);
  \draw[orange, thick] (a1) -- (a3);
  \foreach \x in {0,1,2,3}{
    \node at (a\x) {\tikz{\filldraw[fill=white, very thin] (0,0) circle
      (0.3mm);}};
  }
\end{scope}}}}] &=
    \eval[{\NB{\tikz[font= \tiny, scale=0.8]{\begin{scope}
  \filldraw[fill opacity=0.3, fill =gray] (0,0) rectangle (1,1);
  \draw[double] (0,0) -- +(1,0) coordinate[pos=0.3] (a0) coordinate[pos=0.7] (a1);
  \draw[double] (0,1) -- +(1,0) coordinate[pos=0.3] (a2)
  coordinate[pos=0.7] (a3);
  \draw[orange, thick] (a0) .. controls +(0,0.3) and +(0,0.3) .. (a1);
  \draw[orange, thick] (a2) .. controls +(0,-0.3) and +(0,-0.3) ..  (a3);
  \foreach \x in {0,1,2,3}{
    \node at (a\x) {\tikz{\filldraw[fill=white, very thin] (0,0) circle
      (0.3mm);}};
  }
\end{scope}}}}] \\ 
    \label{eq_rel_A2}
    \eval[{\NB{\tikz[font= \tiny, scale=0.8]{\begin{scope}
  \coordinate (yB) at (0,-1,0);
  \coordinate (yA) at (0,1,0);

  \coordinate (yb) at (0,-0.5,0);
  \coordinate (ya) at (0,0.5,0);

  \foreach \x in {120, 0, 240}{
\begin{scope}[zxplane=1, rotate=42]
    \coordinate (a\x) at (\x:0.5);
    \coordinate (A\x) at (\x:1);
  \end{scope}
  \begin{scope}[zxplane=-1, rotate =42]
    \coordinate (b\x) at (\x:0.5);
    \coordinate (B\x) at (\x:1);
  \end{scope}
\fill[opacity=0.3, gray] (yA) -- (A\x) -- (B\x) -- (yB);
%\draw (a\x) -- (b\x);
\draw (A\x) -- (B\x);
\draw[double] (A\x) -- (yA);
%\draw[double] (B\x) -- (b\x);
}

\draw[double,densely dotted] (yB)--(B120) coordinate[pos=0.58] (b120);
\draw[double] (yB)--(B240);
\draw[double] (yB) -- (B0);
\draw (yA)--(yB);
\draw[double] (b120)--(B120);
\foreach \x in {0}{
\draw[thick] (a\x)--(b\x);
\draw[orange, thick] (a\x)--(b\x);
\node at (a\x) {\tikz{\filldraw[fill=white, very thin] (0,0) circle (0.3mm); }};
\node at (b\x) {\tikz{\filldraw[fill=white, very thin] (0,0) circle (0.3mm); }};
}

\end{scope}}}}] &=
    \eval[{\NB{\tikz[font= \tiny, scale=0.8]{\begin{scope}
  \coordinate (yB) at (0,-1,0);
  \coordinate (yA) at (0,1,0);

  \coordinate (yb) at (0,-0.5,0);
  \coordinate (ya) at (0,0.5,0);

  \foreach \x in {120, 0, 240}{
\begin{scope}[zxplane=1, rotate=42]
    \coordinate (a\x) at (\x:0.5);
    \coordinate (A\x) at (\x:1);
  \end{scope}
  \begin{scope}[zxplane=-1, rotate =42]
    \coordinate (b\x) at (\x:0.5);
    \coordinate (B\x) at (\x:1);
  \end{scope}
\fill[opacity=0.3, gray] (yA) -- (A\x) -- (B\x) -- (yB);
%\draw (a\x) -- (b\x);
\draw (A\x) -- (B\x);
\draw[double] (A\x) -- (yA);
%\draw[double] (B\x) -- (b\x);
}

\draw[double,densely dotted] (yB)--(B120) coordinate[pos=0.58] (b120);
\draw[double] (yB)--(B240);
\draw[double] (yB) -- (B0);
\draw (yA)--(yB);
\draw[double] (b120)--(B120);
\foreach \x in {0}{
%\draw[thick] (a\x)--(b\x);
\draw[orange, thick] (a\x) .. controls +(0,-0.3, 0) and +(0,0,0).. (ya)
.. controls +(-0.5, 0, 0) and +(-0.5, 0,0) .. (yb) .. controls
+(0,0,0) and
+(0,+0.3, 0) .. (b\x);
\node at (a\x) {\tikz{\filldraw[fill=white, very thin] (0,0) circle (0.3mm); }};
\node at (b\x) {\tikz{\filldraw[fill=white, very thin] (0,0) circle (0.3mm); }};
}

\end{scope}
%%% Local Variables:
%%% mode: latex
%%% TeX-master: t
%%% End:
}}}]
\end{align}
\end{lemma}

\begin{lemma}\label{lemma_rel_6}
  The following relation holds (compare to Proposition \ref{prop:relations} \eqref{eq_quadruple_v}):
  \begin{align}
  \label{eq_rel_13}
    \eval[{\NB{\tikz[font= \tiny, scale=1]{\begin{scope}
  \foreach \x in {45, 135, 225, 315}{
\begin{scope}[zxplane=1, rotate=35]
    %\coordinate (a\x) at (\x:0.5);
    \coordinate (A\x) at (\x:1);
  \end{scope}
  \begin{scope}[zxplane=-1, rotate =35]
    %\coordinate (b\x) at (\x:0.5);
    \coordinate (B\x) at (\x:1);
  \end{scope}
%\draw (a\x) -- (b\x);
\draw (B\x) -- (A\x);
}
\coordinate (A) at (0,1,0);
\coordinate (B) at (0,-1,0);
\begin{scope}[zxplane=1, rotate=35]
\coordinate (a90) at (90: 0.4);
\coordinate (a270) at (270: 0.4);  
\end{scope}
\begin{scope}[zxplane=-1, rotate=35]
\coordinate (b90) at (90:0.4);
\coordinate (b270) at (270: 0.4);
\end{scope}
\filldraw[fill opacity=0.3, fill= gray] (a90) --(a270) -- (b270) --
(b90) -- cycle;
\draw[double] (a90) -- (a270);
\draw[double] (b90) -- (b270);
\draw (A45) -- (a90) --(A135);
\draw (A315) -- (a270) --(A225);
\draw (B45) -- (b90) --(B135);
\draw (B315) -- (b270) --(B225);
\draw[thick, orange] (A) -- (B);
\node at (A) {$\myvertex$};
\node at (B) {$\myvertex$};
\end{scope}}}}] &= \eval[{\NB{\tikz[font= \tiny, scale=1]{\begin{scope}
  \foreach \x in {45, 135, 225, 315}{
\begin{scope}[zxplane=1, rotate=35]
    %\coordinate (a\x) at (\x:0.5);
    \coordinate (A\x) at (\x:1);
  \end{scope}
  \begin{scope}[zxplane=-1, rotate =35]
    %\coordinate (b\x) at (\x:0.5);
    \coordinate (B\x) at (\x:1);
  \end{scope}
  \begin{scope}[zxplane=0, rotate =35]
    %\coordinate (b\x) at (\x:0.5);
    \coordinate (C\x) at (\x:1);
  \end{scope}
  % \draw (a\x) -- (b\x);
\draw (B\x) -- (A\x);
}
\begin{scope}[zxplane=0, rotate=35]
\coordinate (c0) at (0: 0.4);
\coordinate (c180) at (180: 0.4);  
\end{scope}
\coordinate (A) at (0,1,0);
\coordinate (B) at (0,-1,0);
\coordinate (a) at (0,0.5,0);
\coordinate (b) at (0,-0.5,0);
\begin{scope}[zxplane=1, rotate=35]
\coordinate (a90) at (90: 0.4);
\coordinate (a270) at (270: 0.4);  
\end{scope}
\begin{scope}[zxplane=-1, rotate=35]
\coordinate (b90) at (90:0.4);
\coordinate (b270) at (270: 0.4);
\end{scope}
\filldraw[fill opacity=0.3, fill= gray] (a90) --(a270) .. controls
+(0,-0.3,0) and +(0,0,0) .. (a) .. controls +(0,0,0) and +(0,0.3,0)
.. (c0) .. controls +(0, -0.3, 0) and +(0,0,0) .. (b).. controls
+(0,0,0) and +(0,0.3,0) .. (b270) --
(b90)   .. controls
+(0,0.3,0) and +(0,0,0) .. (b).. controls +(0,0,0) and +(0,-0.3,0)
.. (c180) .. controls +(0, 0.3, 0) and +(0,0,0) .. (a).. controls
+(0,0,0)  and +(0,-0.3,0) .. cycle;
\draw[double] (a90) -- (a270);
\draw[double] (b90) -- (b270);
\draw (A45) -- (a90) --(A135);
\draw (A315) -- (a270) --(A225);

\draw (B45) -- (b90) --(B135);
\draw (B315) -- (b270) --(B225);
\draw[thick, orange] (A) -- (B);
% \draw[double] (c0) -- (c180);
% \draw (C45) -- (c0) --(C315);
% \draw (C135) -- (c180) --(C225);

\node at (A) {$\myvertex$};
\node at (B) {$\myvertex$};
\end{scope}
%%% Local Variables:
%%% mode: latex
%%% TeX-master: t
%%% End:
}}}] 
  \end{align}  
\end{lemma}

\begin{lemma}\label{lemma_rel_2}
  The following relations hold (compare to Proposition \ref{prop:relations} \eqref{eq_triangle_1}--\eqref{eq_triangle_2}): 
  \begin{align}
  \label{eq_rel_5}
    \eval[{\NB{\tikz[font= \tiny, scale=0.8]{\begin{scope}
  \foreach \x in {120, 0, 240}{
\begin{scope}[zxplane=1, rotate=42]
    \coordinate (a\x) at (\x:0.5);
    \coordinate (A\x) at (\x:1);
  \end{scope}
  \begin{scope}[zxplane=-1, rotate =42]
    \coordinate (b\x) at (\x:0.5);
    \coordinate (B\x) at (\x:1);
  \end{scope}
\fill[opacity=0.3, gray] (a\x) -- (A\x) -- (B\x) -- (b\x);
\draw (a\x) -- (b\x);
\draw (A\x) -- (B\x);
\draw[double] (A\x) -- (a\x);
\draw[double] (B\x) -- (b\x);
}
\draw (a0)--(a120)--(a240)--cycle;
\draw[densely dotted] (b0)--(b120);
\draw[densely dotted] (b120)--(b240);
\draw (b240) -- (b0);
\end{scope}}}}] &=
    \eval[{\NB{\tikz[font= \tiny, scale=0.8]{ % \tikzset{zxplane/.style={canvas is zx plane at y=#1}}
 % \tikzset{yxplane/.style={canvas is yx plane at z=#1}}
 % \tikzset{zyplane/.style={canvas is zy plane at x=#1}}
\begin{scope}
  \coordinate (y1) at (0,0.3,0);
  \coordinate (y0) at (0,-0.3,0);
  \foreach \x in {120, 0, 240}{
\begin{scope}[zxplane=1, rotate=42]
    \coordinate (a\x) at (\x:0.5);
    \coordinate (A\x) at (\x:1);
  \end{scope}
  \begin{scope}[zxplane=-1, rotate =42]
    \coordinate (b\x) at (\x:0.5);
    \coordinate (B\x) at (\x:1);
  \end{scope}
\fill[opacity=0.3, gray] (a\x) -- (A\x) -- (B\x) -- (b\x) -- (y0) --(y1);
\draw (a\x) -- (y1) --(y0) -- (b\x);
\draw (A\x) -- (B\x);
\draw[double] (A\x) -- (a\x);
\draw[double] (B\x) -- (b\x);
}
\draw (a0)--(a120)--(a240)--cycle;
\draw[densely dotted] (b0)--(b120);
\draw[densely dotted] (b120)--(b240);
\draw (b240) -- (b0);
\end{scope}}}}] \\
    \label{eq_rel_6}
    \eval[{\NB{\tikz[font= \tiny, scale=0.8]{\begin{scope}
  \foreach \x in {120, 0, 240}{
\begin{scope}[zxplane=1, rotate=42]
    \coordinate (a\x) at (\x:0.5);
    \coordinate (A\x) at (\x:1);
  \end{scope}
  \begin{scope}[zxplane=-1, rotate =42]
    \coordinate (b\x) at (\x:0.5);
    \coordinate (B\x) at (\x:1);
  \end{scope}}
\fill[opacity=0.3, gray] (a120) -- (A120) -- (B120) -- (b120);
\fill[opacity=0.3, gray] (a240) -- (a0) -- (b0) -- (b240);
\draw (a0) -- (b0); %verticales internas
\draw (a120) -- (b120);
\draw (a240) -- (b240);
\draw (A0) -- (B0); %verticales externas
\draw (A120) -- (B120);
\draw (A240) -- (B240);
\draw (A0) -- (a0);
\draw[double] (A120) -- (a120);
\draw (A240) -- (a240);%superiores lejanas
\draw (B0) -- (b0);
\draw[double] (B120) -- (b120);
\draw (B240) -- (b240); %inferior
\draw (a0)--(a120)--(a240);
\draw[double] (a0) -- (a240);
\draw[densely dotted] (b0)--(b120);
\draw[densely dotted] (b120)--(b240);
\draw[double] (b0) -- (b240);
\end{scope}}}}] &=
    \eval[{\NB{\tikz[font= \tiny, scale=0.8]{\begin{scope}
  \coordinate (y1) at (0,0.3,0);
  \coordinate (y0) at (0,-0.3,0);
  \foreach \x in {120, 0, 240}{
\begin{scope}[zxplane=1, rotate=42]
    \coordinate (a\x) at (\x:0.5);
    \coordinate (A\x) at (\x:1);
  \end{scope}
  \begin{scope}[zxplane=-1, rotate =42]
    \coordinate (b\x) at (\x:0.5);
    \coordinate (B\x) at (\x:1);
  \end{scope} %extra bracket when finishing loop
\draw (a\x) -- (y1) --(y0) -- (b\x);
\draw (A\x) -- (B\x);
}
\fill[opacity=0.3, gray] (a120) -- (A120) -- (B120) -- (b120) -- (y0) -- (y1);
\fill[opacity=0.3, gray] (a240) -- (a0) -- (y1);
\fill[opacity=0.3, gray] (b240) -- (b0) -- (y0);
%\fill[opacity=0.3, gray] (a240) -- (a0) -- (b0) -- (b240);
\draw (a0)--(a120)--(a240);
\draw [double] (a240)--(a0);
\draw[double] (A120) -- (a120);
\draw (A0) -- (a0);
\draw (A240) -- (a240);
\draw [double] (b240)--(b0);
\draw[double] (B120) -- (b120);
\draw (B0) -- (b0);
\draw (B240) -- (b240);
\draw[densely dotted] (b0)--(b120);
\draw[densely dotted] (b120)--(b240);
\end{scope}

}}}] \\
    \label{eq_rel_7}
    \eval[{\NB{\tikz[font= \tiny, scale=0.8]{\input{\imagesfolder/4v_triangle222222I}}}}] &=
    \eval[{\NB{\tikz[font= \tiny, scale=0.8]{\begin{scope}
  \coordinate (y1) at (0,0.3,0);
  \coordinate (y0) at (0,-0.3,0);
  \foreach \x in {120, 0, 240}{
\begin{scope}[zxplane=1, rotate=42]
    \coordinate (a\x) at (\x:0.5);
    \coordinate (A\x) at (\x:1);
  \end{scope}
  \begin{scope}[zxplane=-1, rotate =42]
    \coordinate (b\x) at (\x:0.5);
    \coordinate (B\x) at (\x:1);
  \end{scope}
  \fill[opacity=0.3, gray] (a\x) -- (A\x) -- (B\x) -- (b\x) -- (y0) --(y1);
\draw (a\x) -- (y1) --(y0) -- (b\x);
\draw (A\x) -- (B\x);
\draw[double] (A\x) -- (a\x);
\draw[double] (B\x) -- (b\x);
}
\fill[opacity=0.3, gray] (a120) --(a240) -- (y1);
\fill[opacity=0.3, gray] (a120) --(a0) -- (y1);
\fill[opacity=0.3, gray] (a0) --(a240) -- (y1);
\fill[opacity=0.3, gray] (b120) --(b240) -- (y0);
\fill[opacity=0.3, gray] (b120) --(b0) -- (y0);
\fill[opacity=0.3, gray] (b0) --(b240) -- (y0);
\draw[double] (a0)--(a120);
\draw[double] (a240)--(a120);
\draw[double] (a0)--(a240);
\draw[double,densely dotted] (b0)--(b120);
\draw[double,densely dotted] (b120)--(b240);
\draw[double] (b240) -- (b0);
\node[font=\tiny, left] at (y0) {$\kempetriangle$};
\node[font=\tiny, left] at (y1) {$\kempetriangle$};
\end{scope}}}}] +
     \eval[{\NB{\tikz[font= \tiny, scale=0.8]{\begin{scope}
  \coordinate (y1) at (0,0.3,0);
  \coordinate (y0) at (0,-0.3,0);
  \foreach \x in {120, 0, 240}{
\begin{scope}[zxplane=1, rotate=42]
    \coordinate (a\x) at (\x:0.5);
    \coordinate (A\x) at (\x:1);
  \end{scope}
  \begin{scope}[zxplane=-1, rotate =42]
    \coordinate (b\x) at (\x:0.5);
    \coordinate (B\x) at (\x:1);
  \end{scope}
  \fill[opacity=0.3, gray] (a\x) -- (A\x) -- (B\x) -- (b\x) -- (y0) --(y1);
\draw (a\x) -- (y1) --(y0) -- (b\x);
\draw (A\x) -- (B\x);
\draw[double] (A\x) -- (a\x);
\draw[double] (B\x) -- (b\x);
}
\fill[opacity=0.3, gray] (a120) --(a240) -- (y1);
\fill[opacity=0.3, gray] (a120) --(a0) -- (y1);
\fill[opacity=0.3, gray] (a0) --(a240) -- (y1);
\fill[opacity=0.3, gray] (b120) --(b240) -- (y0);
\fill[opacity=0.3, gray] (b120) --(b0) -- (y0);
\fill[opacity=0.3, gray] (b0) --(b240) -- (y0);
\draw[double] (a0)--(a120);
\draw[double] (a240)--(a120);
\draw[double] (a0)--(a240);
\draw[double,densely dotted] (b0)--(b120);
\draw[double,densely dotted] (b120)--(b240);
\draw[double] (b240) -- (b0);
\node[font=\tiny, left] at (y0) {$\kempesquare$};
\node[font=\tiny, left] at (y1) {$\kempesquare$};
\end{scope}}}}]\\
     \label{eq_rel_8}
    \eval[{\NB{\tikz[font= \tiny, scale=0.8]{\input{\imagesfolder/4v_triangle7BI}}}}] &=
    \eval[{\NB{\tikz[ scale=0.8]{\input{\imagesfolder/4v_triangle7Bcup1}}}}] +
     \eval[{\NB{\tikz[ scale=0.8]{\input{\imagesfolder/4v_triangle7Bcup2}}}}]   
  \end{align}
\end{lemma}

\begin{lemma}\label{lemma_rel_5}
  The following relations hold (compare to Proposition \ref{prop:relations} \eqref{eq_square_1}--\eqref{eq_square_4}): 
  \begin{align}
\label{eq_rel_10}
    \eval[{\NB{\tikz[font= \tiny, scale=1]{\input{\imagesfolder/4v_square1112I}}}}] &=
    \eval[{\NB{\tikz[font= \tiny, scale=1]{\begin{scope}
  \foreach \x in {45, 135, 225, 315}{
\begin{scope}[zxplane=1, rotate=35]
    \coordinate (a\x) at (\x:0.5);
    \coordinate (A\x) at (\x:1);
  \end{scope}
  \begin{scope}[zxplane=-1, rotate =35]
    \coordinate (b\x) at (\x:0.5);
    \coordinate (B\x) at (\x:1);
  \end{scope}
%\draw (a\x) -- (b\x);
\draw (B\x) -- (A\x);
}

\begin{scope}[zxplane=0.3, rotate =35]
  \coordinate (M1) at (0.354, 0);
  \coordinate (M2) at (-0.354, 0);
  \draw (M1) -- (M2);
\end{scope} 

\begin{scope}[zxplane=-0.3, rotate =35]
  \coordinate (m1) at (0.354, 0);
  \coordinate (m2) at (-0.354, 0);

\end{scope} 

\begin{scope}
  \draw (b45) -- (m1);
  \draw (b315) -- (m1);
  \draw (m2) -- (M2);
  \draw (b225) -- (m2);
  \draw (b135) -- (m2);

\end{scope}

 \draw[double, densely dotted] (B135) -- (b135);

 \fill[opacity=0.3, gray] (m2)--(M2)--(a135) --(A135) -- (B135) -- (b135);
\draw[orange, thick]  ($0.3*(a45) +0.3*(B45)$) .. controls +(0, 0.2) and
+(0,0) ..   (M1) .. controls ($0.9*(a315)$) .. (M2)
.. controls +(0,0)  and +(0,0.2) .. ($0.4*(A135) +0.4*(b135)$)
.. controls +(0, -0.2) and +(0,0) .. (m2).. controls ($0.7*(b315)$) ..(m1)
.. controls +(0, 0) and +(0, -0.2) .. ($0.3*(a45) +0.3*(B45)$);
% +(0, -0)..  ($0.5*(a135) + 0.5*(M2)$)--(M1) .. controls
% ($0.3*(A315)+0.3*(B315)$) .. (m1) -- ($0.7*(b135) + 0.3*(m2)$)
% .. controls +(0, 0) and +(0, -0.2)..  cycle;
    \draw (m1) -- (m2);
  \draw (m1) -- (M1);
  \draw (a45) -- (M1);
  \draw (a315) -- (M1);
  \draw (a225) -- (M2);
  \draw (a135) -- (M2);

 \fill[opacity=0.3, gray] (m1)--(M1)--(a45) --(A45) -- (B45) -- (b45);
 \fill[opacity=0.3, gray] (a225) --(a315) -- (M1) -- (M2);
 \fill[opacity=0.3, gray] (m2) -- (m1) -- (b315) -- (b225);
 \draw (a45) -- (a135);
 \draw (a135) -- (a225);
 \draw[double] (a225) -- (a315);
 \draw (a315) -- (a45);
 \draw[densely dotted] (b45) -- (b135);
 \draw[densely dotted] (b135) -- (b225);
 \draw[double, densely dotted] (b225) -- (b315);
 \draw (b315) -- (b45);
 \draw[double] (A45) -- (a45);
 \draw[double] (A135) -- (a135);
\draw (A225) -- (a225);
\draw (A315) -- (a315);
 \draw[double] (B45) -- (b45);

\draw (B225) -- (b225);
\draw (B315) -- (b315);
\end{scope}}}}]+
    \eval[{\NB{\tikz[font= \tiny, scale=1]{\begin{scope}[rotate=0]
  \foreach \x in {45, 135, 225, 315}{
\begin{scope}[zxplane=1, rotate=35]
    \coordinate (a\x) at (\x:0.5);
    \coordinate (A\x) at (\x:1);
  \end{scope}
  \begin{scope}[zxplane=-1, rotate =35]
    \coordinate (b\x) at (\x:0.5);
    \coordinate (B\x) at (\x:1);
  \end{scope}
  % \draw (a\x) -- (b\x);

  \begin{scope}[zxplane=0, rotate =35]
    \coordinate (c\x) at (\x:0.5);
    \coordinate (C\x) at (\x:1);
  \end{scope}

\draw (B\x) -- (A\x);
}
\draw[very thin, dashed] (C135) .. controls ($0.45*(c135)$) and ($0.45*(c45)$) .. (C45);

\draw[very thin, dashed] (C225) .. controls ($0.45*(c225)$) and ($0.45*(c315)$) .. (C315);

 \draw[double, densely dotted] (B135) -- (b135);
 \fill[opacity=0.3, gray] (a135) --(A135) -- (B135) -- (b135);
 \fill[opacity=0.3, gray] (a45) --(A45) -- (B45) -- (b45);
 
 \filldraw[fill opacity =0.3, fill= gray] (a225) .. controls ($0.7*(a225) + 0.3*(b225)$) and
 ($0.7*(a315) + 0.3*(b315)$) .. (a315) coordinate[pos=0.65] (a270);

  \filldraw[fill opacity =0.3, fill= gray] (b225) .. controls ($0.3*(a225) + 0.7*(b225)$) and
 ($0.3*(a315) + 0.7*(b315)$) .. (b315) coordinate[pos=0.38] (b270);

  \fill[fill opacity =0.3, fill= gray] (a45)-- (b45) .. controls
  ($0.5*(a45) + 0.5*(b45)$) and ($0.5*(a135) + 0.5*(b135)$) .. (b135)
  -- (a135) .. controls ($0.5*(a135) + 0.5*(b135)$) and ($0.5*(a45) + 0.5*(b45)$) .. (a45);

    \draw (b45) .. controls 
    ($0.5*(a45) + 0.5*(b45)$) and ($0.5*(a135) + 0.5*(b135)$)
    .. (b135) coordinate[pos=0.6] (b90);
    \draw (a135) .. controls ($0.5*(a135) + 0.5*(b135)$) and
    ($0.5*(a45) + 0.5*(b45)$) .. (a45) coordinate[pos=0.58] (a90);

    \draw[very thin] (a90) -- (a270);
    \draw[very thin] (b270) -- (b90);
 
 % \fill[opacity=0.3, blue] (a225) --(a315) -- (b315) -- (b225);
 % coordinate[pos=0.67] (b90);
 \draw (a45) -- (a135);
 \draw (a135) -- (a225);
 \draw[double] (a225) -- (a315);
 \draw (a315) -- (a45);
 \draw[densely dotted] (b45) -- (b135);
 \draw[densely dotted] (b135) -- (b225);
 \draw[double, densely dotted] (b225) -- (b315);
 \draw (b315) -- (b45);
 \draw[double] (A45) -- (a45);
 \draw[double] (A135) -- (a135);
\draw (A225) -- (a225);
\draw (A315) -- (a315);
 \draw[double] (B45) -- (b45);

\draw (B225) -- (b225);
\draw (B315) -- (b315);
\end{scope}}}}] \\
    \eval[{\NB{\tikz[font= \tiny, scale=1]{\input{\imagesfolder/4v_square1222I}}}}] &=
    \eval[{\NB{\tikz[font= \tiny, scale=1]{\begin{scope}
  \foreach \x in {45, 135, 225, 315}{
\begin{scope}[zxplane=1, rotate=35]
    \coordinate (a\x) at (\x:0.5);
    \coordinate (A\x) at (\x:1);
  \end{scope}
  \begin{scope}[zxplane=-1, rotate =35]
    \coordinate (b\x) at (\x:0.5);
    \coordinate (B\x) at (\x:1);
  \end{scope}
%\draw (a\x) -- (b\x) -- (B\x) -- (A\x) --cycle;
}

\begin{scope}[zxplane=0.3, rotate =35]
  \coordinate (M1) at (0.354, 0);
  \coordinate (M2) at (-0.354, 0);
  \draw (M1) -- (M2);
\end{scope} 

\begin{scope}[zxplane=-0.3, rotate =35]
  \coordinate (m1) at (0.354, 0);
  \coordinate (m2) at (-0.354, 0);
  \draw (m1) -- (m2);
  \draw (m1) -- (M1);
  \draw (m2) -- (M2);
\end{scope} 

\filldraw[fill= gray, fill opacity=0.3] (a45) -- (M1) -- (m1) -- (b45) -- (B45)
-- (A45);
\filldraw[fill =gray, fill opacity=0.3] (a135) -- (M2) -- (m2) -- (b135) -- (B135)
-- (A135);

\filldraw[fill =gray, fill opacity=0.3] (a135) -- (M2) -- (a225);
\filldraw[fill =gray, fill opacity=0.3] (a45) -- (M1) -- (a315);

\filldraw[fill =gray, fill opacity=0.3] (b135) -- (m2) -- (b225);
\filldraw[fill =gray, fill opacity=0.3] (b45) -- (m1) -- (b315);

% \filldraw[fill opacity=0.3, fill= gray ] (a45) -- (A45) -- (B45) -- (b45);
% \filldraw[fill opacity=0.3, fill = gray] (a135) -- (A135) -- (B135) -- (b135);
\draw (a225) -- (A225) -- (B225) -- (b225);
\draw (a315) -- (A315) -- (B315) -- (b315);
%%%%%%%%%%%% 
% \filldraw[fill opacity=0.3, fill = gray]  (b135) -- (a135) --
% (a45)-- (b45);
% \filldraw[fill opacity=0.3, fill = gray] (b225) -- (a225) --
% (a135)--(b135);
%\filldraw[fill opacity=0.3, fill = gray] (b45) -- (a45) -- (a315) -- (b315);
\draw (a315) -- (a225);

\draw[double] (a315)-- (a45) --(a135) --(a225);
\draw[double] (a45) -- (A45);
\draw[double] (a135) --(A135);

\draw[double] (b315)-- (b45);
\draw[double, densely dotted] (b45) --(b135);
\draw[double, densely dotted] (b135) --(b225);
\draw[double] (b45) -- (B45);
\draw[double, densely dotted] (b135) --(B135);
\draw[densely dotted] (b225) -- (b315);

%\fill[opacity=0.3, gray] (a90) --(a0) -- (b0) -- (b90)
%  coordinate[pos=0.67] (b45);
%  \fill[opacity=0.3, gray] (a270) --(a180) -- (b180) -- (b270);
%  \draw[double] (a90) -- (a0);
%  \draw[double] (a270) -- (a180);
%  \draw[double, densely dotted] (b270) -- (b180);
%  %\draw[double, densely dotted] (b45) -- (b0);
%  \draw[double] (b0) -- (b90);
% \draw[densely dotted] (b90) -- (b180);
% \draw (b0) -- (b270);
% \draw (a0) -- (a270);
% \draw (a90) -- (a180);

% \fill[opacity=0.3, gray] (a120) --(a0) -- (b0) -- (b120);
% \fill[opacity=0.3, gray] (a0) --(a240) -- (b240) -- (b0);
% \draw[double] (a0)--(a120);
% \draw[double] (a240)--(a120);
% \draw[double] (a0)--(a240);
% \draw[double,densely dotted] (b0)--(b120);
% \draw[double,densely dotted] (b120)--(b240);
% \draw[double] (b240) -- (b0);
% \draw (a0)--(b0);
\end{scope}}}}] +
    \eval[{\NB{\tikz[font= \tiny, scale=1]{\begin{scope}
  \foreach \x in {45, 135, 225, 315}{
\begin{scope}[zxplane=1, rotate=35]
    \coordinate (a\x) at (\x:0.5);
    \coordinate (A\x) at (\x:1);
  \end{scope}
  \begin{scope}[zxplane=-1, rotate =35]
    \coordinate (b\x) at (\x:0.5);
    \coordinate (B\x) at (\x:1);
  \end{scope}

  \begin{scope}[zxplane=-0.4, rotate =35]
    \coordinate (n\x) at (\x:0.5);
  \end{scope}

  \begin{scope}[zxplane= 0.4, rotate =35]
    \coordinate (m\x) at (\x:0.5);
  \end{scope}

  % \draw (a\x) -- (b\x) -- (B\x) -- (A\x) --cycle;
}

\draw (n45) -- (n135) -- (n225) -- (n315)--cycle;

\draw (m45) -- (m135) -- (m225) -- (m315)--cycle;
\draw (m45) -- (m135) -- (n135) -- (n45)--cycle;
\draw (b315) -- (a315) -- (a225) -- (b225);

\filldraw[fill opacity=0.3, fill= gray ] (a45) -- (A45) -- (B45) -- (b45);
\filldraw[fill opacity=0.3, fill = gray] (a135) -- (A135) -- (B135) -- (b135);
\draw (a225) -- (A225) -- (B225) -- (b225);
\draw (a315) -- (A315) -- (B315) -- (b315);
%%%%%%%%%%%% 
\filldraw[fill opacity=0.3, fill = gray]  (m135) -- (a135) --
(a45)-- (m45);
\filldraw[fill opacity=0.3, fill = gray]  (n135) -- (b135) --
(b45)-- (n45);
\filldraw[fill opacity=0.3, fill = gray] (m225) -- (a225) --
(a135)--(m135);
\filldraw[fill opacity=0.3, fill = gray] (b225) -- (n225) --
(n135)--(b135);
\filldraw[fill opacity=0.3, fill = gray] (n225) -- (n315) -- (m315) --
(m225);
\filldraw[fill opacity=0.3, fill = gray] (b45) -- (n45) -- (n315) --
(b315);
\filldraw[fill opacity=0.3, fill = gray] (m45) -- (a45) -- (a315) --
(m315);

\draw[double] (a315)-- (a45) --(a135) --(a225);
\draw[double] (a45) -- (A45);
\draw[double] (a135) --(A135);

\draw[double] (b315)-- (b45);
\draw[double, densely dotted] (b45) --(b135);
\draw[double, densely dotted] (b135) --(b225);
\draw[double] (b45) -- (B45);
\draw[double, densely dotted] (b135) --(B135);
\draw[densely dotted] (b225) -- (b315);

%\fill[opacity=0.3, gray] (a90) --(a0) -- (b0) -- (b90)
%  coordinate[pos=0.67] (b45);
%  \fill[opacity=0.3, gray] (a270) --(a180) -- (b180) -- (b270);
%  \draw[double] (a90) -- (a0);
%  \draw[double] (a270) -- (a180);
%  \draw[double, densely dotted] (b270) -- (b180);
%  %\draw[double, densely dotted] (b45) -- (b0);
%  \draw[double] (b0) -- (b90);
% \draw[densely dotted] (b90) -- (b180);
% \draw (b0) -- (b270);
% \draw (a0) -- (a270);
% \draw (a90) -- (a180);

% \fill[opacity=0.3, gray] (a120) --(a0) -- (b0) -- (b120);
% \fill[opacity=0.3, gray] (a0) --(a240) -- (b240) -- (b0);
% \draw[double] (a0)--(a120);
% \draw[double] (a240)--(a120);
% \draw[double] (a0)--(a240);
% \draw[double,densely dotted] (b0)--(b120);
% \draw[double,densely dotted] (b120)--(b240);
% \draw[double] (b240) -- (b0);
% \draw (a0)--(b0);
\end{scope}
}}}] \\
     \label{eq_rel_12}
    \eval[{\NB{\tikz[font= \tiny, scale=1]{\input{\imagesfolder/4v_square1122I}}}}] &=
    \eval[{\NB{\tikz[font= \tiny, scale=1]{\begin{scope}
  \foreach \x in {45, 135, 225, 315}{
\begin{scope}[zxplane=1, rotate=35]
    \coordinate (a\x) at (\x:0.5);
    \coordinate (A\x) at (\x:1);
  \end{scope}
  \begin{scope}[zxplane=-1, rotate =35]
    \coordinate (b\x) at (\x:0.5);
    \coordinate (B\x) at (\x:1);
  \end{scope}
%\draw (a\x) -- (b\x) -- (B\x) -- (A\x) --cycle;
}

\begin{scope}[zxplane=0.3, rotate =35]
  \coordinate (M1) at (0.354, 0);
  \coordinate (M2) at (-0.354, 0);
  \draw (M1) -- (M2);
\end{scope} 

\begin{scope}[zxplane=-0.3, rotate =35]
  \coordinate (m1) at (0.354, 0);
  \coordinate (m2) at (-0.354, 0);

\end{scope} 

% \draw[orange]  ($0.3*(a135) +0.7*(B135)$) --  ($0.7*(A135)
% +0.3*(b135)$) -- ($0.5*(a135) + 0.5*(M2)$);

\filldraw[fill opacity=0.3, fill= gray ] (a315) -- (A315) -- (B315)
-- (b315) -- (m1) -- (M1) -- cycle;
\filldraw[fill opacity=0.3, fill = gray] (M2) -- (a135) -- (A135) --
(B135) -- (b135) -- (m2);
\draw (a225) -- (A225) -- (B225) -- (b225);
\draw (a45) -- (A45) -- (B45) -- (b45);
%%%%%%%%%%%% 
\filldraw[fill opacity=0.3, fill = gray]  (M2) -- (a135) -- (a45)--
(M1);
\filldraw[fill opacity=0.3, fill = gray]  (b135) -- (m2) -- (m1)--
(b45);

% \filldraw[fill opacity=0.3, fill = white]  (M2) -- (m2) -- (m1)--
% (M1);

  \draw (m2) -- (M2);
  \draw (b225) -- (m2);
  \draw (b135) -- (m2);

\filldraw[fill opacity=0.3, fill = gray] (b225) -- (m2) -- (b135) --cycle;
\filldraw[fill opacity=0.3, fill = gray] (a225) -- (M2) --  (a135) -- cycle;

\draw[orange, thick]  ($0.3*(a135) +0.7*(B135)$) .. controls +(0, 0.2) and
+(0, -0.2)..   ($0.7*(A135)
+0.3*(b135)$) .. controls +(0, 0.2) and
+(0, -0)..  ($0.5*(a135) + 0.5*(M2)$)--(M1) .. controls
($0.3*(A315)+0.3*(B315)$) .. (m1) -- ($0.7*(b135) + 0.3*(m2)$)
.. controls +(0, 0) and +(0, -0.2)..  cycle;

%\draw (b45) -- (a45) -- (a315) -- (b315);
%\draw (b315) -- (a315) -- (a225) -- (b225);

\draw (a225) -- (a315)-- (a45);
\draw[double] (a45) --(a135) --(a225);
%\draw[double] (a45) -- (A45);
\draw[double] (a135) --(A135);

  \draw (m1) -- (m2);
  \draw (m1) -- (M1);
  \draw (a45) -- (M1);
  \draw (a315) -- (M1);
  \draw (a225) -- (M2);
  \draw (a135) -- (M2);
  \draw (b45) -- (m1);
  \draw (b315) -- (m1);

\draw (b315)-- (b45);
\draw[double, densely dotted] (b45) --(b135);
\draw[double, densely dotted] (b135) --(b225);
\draw (b45) -- (B45);
\draw[double, densely dotted] (b135) --(B135);
\draw[double] (b315) --(B315);
\draw[double] (a315) --(A315);
\draw[densely dotted] (b225) -- (b315);

%\fill[opacity=0.3, gray] (a90) --(a0) -- (b0) -- (b90)
%  coordinate[pos=0.67] (b45);
%  \fill[opacity=0.3, gray] (a270) --(a180) -- (b180) -- (b270);
%  \draw[double] (a90) -- (a0);
%  \draw[double] (a270) -- (a180);
%  \draw[double, densely dotted] (b270) -- (b180);
%  %\draw[double, densely dotted] (b45) -- (b0);
%  \draw[double] (b0) -- (b90);
% \draw[densely dotted] (b90) -- (b180);
% \draw (b0) -- (b270);
% \draw (a0) -- (a270);
% \draw (a90) -- (a180);

% \fill[opacity=0.3, gray] (a120) --(a0) -- (b0) -- (b120);
% \fill[opacity=0.3, gray] (a0) --(a240) -- (b240) -- (b0);
% \draw[double] (a0)--(a120);
% \draw[double] (a240)--(a120);
% \draw[double] (a0)--(a240);
% \draw[double,densely dotted] (b0)--(b120);
% \draw[double,densely dotted] (b120)--(b240);
% \draw[double] (b240) -- (b0);
% \draw (a0)--(b0);
\end{scope}}}}] +
    \eval[{\NB{\tikz[font= \tiny, scale=1]{\begin{scope}
  \foreach \x in {45, 135, 225, 315}{
\begin{scope}[zxplane=1, rotate=35]
    \coordinate (a\x) at (\x:0.5);
    \coordinate (A\x) at (\x:1);
  \end{scope}
  \begin{scope}[zxplane=-1, rotate =35]
    \coordinate (b\x) at (\x:0.5);
    \coordinate (B\x) at (\x:1);
  \end{scope}
%\draw (a\x) -- (b\x) -- (B\x) -- (A\x) --cycle;
}

\begin{scope}[zxplane=0.3, rotate =35]
  \coordinate (M1) at (0, 0.3540);
  \coordinate (M2) at (0, -0.354);
  \draw (M1) -- (M2);
\end{scope} 

\begin{scope}[zxplane=-0.3, rotate =35]
  \coordinate (m1) at (0, 0.354);
  \coordinate (m2) at (0, -0.354);

\end{scope} 

% \draw[orange]  ($0.3*(a135) +0.7*(B135)$) --  ($0.7*(A135)
% +0.3*(b135)$) -- ($0.5*(a135) + 0.5*(M2)$);

\filldraw[fill opacity=0.3, fill= gray ] (a315) -- (A315) -- (B315)
-- (b315) -- (m2) -- (M2) -- cycle;
\filldraw[fill opacity=0.3, fill = gray] (M1) -- (a135) -- (A135) --
(B135) -- (b135) -- (m1);
\draw (a225) -- (A225) -- (B225) -- (b225);
\draw (a45) -- (A45) -- (B45) -- (b45);
%%%%%%%%%%%% 
\filldraw[fill opacity=0.3, fill = gray]  (M1) -- (a135) -- (a225)--
(M2);
\filldraw[fill opacity=0.3, fill = gray]  (b135) -- (m1) -- (m2)--
(b225);

% \filldraw[fill opacity=0.3, fill = white]  (M2) -- (m2) -- (m1)--
% (M1);

\filldraw[fill opacity=0.3, fill = gray] (b45) -- (m1) -- (b135) --cycle;
\filldraw[fill opacity=0.3, fill = gray] (a45) -- (M1) --  (a135) -- cycle;

\draw[orange, thick]  ($0.3*(a135) +0.7*(B135)$) .. controls +(0, 0.2) and
+(0, -0.2)..   ($0.7*(A135)
+0.3*(b135)$) .. controls +(0, 0.2) and
+(0, -0)..  ($0.5*(a135) + 0.5*(M1)$)--(M2) .. controls
($0.5*(A315)+0.5*(B315)$) .. (m2) -- ($0.7*(b135) + 0.3*(m1)$)
.. controls +(0, 0) and +(0, -0.2)..  cycle;

%\draw (b45) -- (a45) -- (a315) -- (b315);
%\draw (b315) -- (a315) -- (a225) -- (b225);

\draw (a225) -- (a315)-- (a45);
\draw[double] (a45) --(a135) --(a225);
%\draw[double] (a45) -- (A45);
\draw[double] (a135) --(A135);

\begin{scope}
  \draw (m1) -- (m2);
  \draw (m1) -- (M1);
  \draw (m2) -- (M2);
  \draw (b225) -- (m2);
  \draw (b315) -- (m2);

  \draw (a225) -- (M2);
  \draw (a315) -- (M2);
  \draw (a45) -- (M1);
  \draw (a135) -- (M1);
  \draw (b45) -- (m1);
  \draw (b135) -- (m1);
\end{scope}[red]
  
\draw (b315)-- (b45);
\draw[double, densely dotted] (b45) --(b135);
\draw[double, densely dotted] (b135) --(b225);
\draw (b45) -- (B45);
\draw[double, densely dotted] (b135) --(B135);
\draw[double] (b315) --(B315);
\draw[double] (a315) --(A315);
\draw[densely dotted] (b225) -- (b315);

%\fill[opacity=0.3, gray] (a90) --(a0) -- (b0) -- (b90)
%  coordinate[pos=0.67] (b45);
%  \fill[opacity=0.3, gray] (a270) --(a180) -- (b180) -- (b270);
%  \draw[double] (a90) -- (a0);
%  \draw[double] (a270) -- (a180);
%  \draw[double, densely dotted] (b270) -- (b180);
%  %\draw[double, densely dotted] (b45) -- (b0);
%  \draw[double] (b0) -- (b90);
% \draw[densely dotted] (b90) -- (b180);
% \draw (b0) -- (b270);
% \draw (a0) -- (a270);
% \draw (a90) -- (a180);

% \fill[opacity=0.3, gray] (a120) --(a0) -- (b0) -- (b120);
% \fill[opacity=0.3, gray] (a0) --(a240) -- (b240) -- (b0);
% \draw[double] (a0)--(a120);
% \draw[double] (a240)--(a120);
% \draw[double] (a0)--(a240);
% \draw[double,densely dotted] (b0)--(b120);
% \draw[double,densely dotted] (b120)--(b240);
% \draw[double] (b240) -- (b0);
% \draw (a0)--(b0);
\end{scope}}}}] \\
    \label{eq_rel_9}
    \eval[{\NB{\tikz[font= \tiny, scale=1]{\input{\imagesfolder/4v_square1212I}}}}] &= \eval[{\NB{\tikz[font= \tiny, scale=1]{\begin{scope}
  \foreach \x in {270, 0, 90, 180}{
\begin{scope}[zxplane=1, rotate=0]
    \coordinate (a\x) at (\x:0.5);
    \coordinate (A\x) at (\x:1);
  \end{scope}
  \begin{scope}[zxplane=-1, rotate =0]
    \coordinate (b\x) at (\x:0.5);
    \coordinate (B\x) at (\x:1);
  \end{scope}
\draw (a\x) -- (A\x) -- (B\x) -- (b\x);
}

  \begin{scope}[zxplane=-0.9, rotate =0]
    \coordinate (b1) at (80:0.5);
    \coordinate (b2) at (10:0.5);
  \end{scope}

\begin{scope}[zyplane = 0]
 %\fill[opacity=0.3, white] (a90) --(a0)  -- (b0) -- (b90)  

 \fill[fill opacity=0.0, red, draw=black] (a0) .. controls +(0,-0.8) and +(0, -0.8).. (a270) coordinate [pos=0.4] (al);%-- (b180) -- (b270);
 % \fill[fill opacity=0.3, gray, draw=black] (a270)
 % --(a180) arc (-180:0: 0.354 and 0.6)coordinate[pos=0.55] (al);%-- (b180) -- (b270);

 % \draw (a180) arc(-180:-154:0.354 and 0.6);
 %\draw (a180) arc(-180:0:0.354 and 0.6);
  \fill[fill opacity=0.0, red, draw=black] (a90) .. controls +(0,-0.8) and +(0, -0.8).. (a180) coordinate[pos=0.4] (ar);%-- (b180) -- (b270);
 \fill[fill opacity=0.3, gray, draw=black] 
 (b0) .. controls +(0,0.8) and +(0, 0.8).. (b270) coordinate[pos=0.6] (bl);%-- (b180) -- (b270);
 %\draw (a180) arc(-180:-154:0.354 and 0.6);
 %\draw (a180) arc(-180:0:0.354 and 0.6);
  \fill[fill opacity=0.0, red, draw=black] 
 (b90) coordinate[pos=0.67] (b45)  .. controls +(0,0.8) and +(0,
 0.8).. (b180) coordinate[pos=0.6] (br);%-- (b180) -- (b270);
 %\fill[red, opacity=0.5] (bl) circle(1mm); 
 \draw[very thin] (bl)--(br);
 \draw[very thin] (al)--(ar);

\end{scope}

 \begin{scope}[zxplane=0, rotate =0]
\draw[very thin, dashed] (90:1) .. controls (90:0.5) and (180:0.5)
.. (180:1);
\draw[very thin, dashed] (270:1) .. controls (270:0.5) and (0:0.5) .. (0:1);
 \end{scope}

 \draw[double] (a90) -- (a0);
 \draw[double] (a270) -- (a180);
 \draw[double, densely dotted] (b270) -- (b180);
 %\draw[double, densely dotted] (b45) -- (b0);
 \draw[double] (b0) -- (b90);
\draw[densely dotted] (b90) -- (b180);
\draw[densely dotted] (b0) -- (b270) coordinate[pos=0.6] (b315);
\draw (b270)--(b315);
\draw (a0) -- (a270);
\draw (a90) -- (a180);

 \begin{scope}[zxplane=-0.85, rotate =0]
    \coordinate (d1) at (45:0.354);
  \end{scope}

  \begin{scope}[zxplane=0.89, rotate =0]
    \coordinate (d2) at (225:0.354);
  \end{scope}

\fill[fill = gray, fill opacity=0.3] (b180)-- (b270) .. controls
+(0,0.4,0) and +(-0.03,0, 0) .. (bl) -- (br) .. controls +(-0.03,0, 0)  and +(0,0.4,0) 
.. cycle ;

\fill[fill = gray, fill opacity=0.3] (b90)-- (b0) .. controls
+(0,0.5,0) and +(0.11,0, 0) .. (bl) -- (br) .. controls +(0.11,0, 0)  and +(0,0.5,0) 
.. cycle ;

\fill[fill = gray, fill opacity=0.3] (a180)-- (a270) .. controls
+(0,-0.5,0) and +(-0.11,0, 0) .. (al) -- (ar) .. controls +(-0.11,0, 0)  and +(0,-0.5,0) 
.. cycle ;

\fill[fill = gray, fill opacity=0.3] (a90)-- (a0) .. controls
+(0,-0.4,0) and +(0.03,0, 0) .. (al) -- (ar) .. controls +(0.03,0, 0)  and +(0,-0.4,0) 
.. cycle ;

%       \draw[thick, orange] (b1) .. controls +(0,0.55) and +(0, 0.55)
%     .. (b2) --cycle;

% % \node[scale = 0.7] at (d1) {$\bullet$};
% % \node[yshift=-0.35cm, scale = 0.7] at (d1) {$e_1$};
% \node[scale = 0.7] at (d2) {$\bullet$};
% \node[yshift = 0.3cm, scale = 0.7] at (d2) {$e_1$};

% \fill[opacity=0.3, gray] (a120) --(a0) -- (b0) -- (b120);
% \fill[opacity=0.3, gray] (a0) --(a240) -- (b240) -- (b0);
% \draw[double] (a0)--(a120);
% \draw[double] (a240)--(a120);
% \draw[double] (a0)--(a240);
% \draw[double,densely dotted] (b0)--(b120);
% \draw[double,densely dotted] (b120)--(b240);
% \draw[double] (b240) -- (b0);
% \draw (a0)--(b0);
\end{scope}}}}] +
    \eval[{\NB{\tikz[scale=1]{\begin{scope}
  \foreach \x in {270, 0, 90, 180}{
\begin{scope}[zxplane=1, rotate=0]
    \coordinate (a\x) at (\x:0.5);
    \coordinate (A\x) at (\x:1);
  \end{scope}
  \begin{scope}[zxplane=-1, rotate =0]
    \coordinate (b\x) at (\x:0.5);
    \coordinate (B\x) at (\x:1);
  \end{scope}
\draw (a\x) -- (A\x) -- (B\x) -- (b\x);
}

  \begin{scope}[zxplane=-0.9, rotate =0]
    \coordinate (b1) at (80:0.5);
    \coordinate (b2) at (10:0.5);
  \end{scope}

\begin{scope}[zyplane = 0]
 %\fill[opacity=0.3, white] (a90) --(a0)  -- (b0) -- (b90)  

 \fill[fill opacity=0.3, gray, draw=black] (a270)
 --(a180) .. controls +(0,-0.8) and +(0, -0.8).. (a270) coordinate [pos=0.65] (al);%-- (b180) -- (b270);
 % \fill[fill opacity=0.3, gray, draw=black] (a270)
 % --(a180) arc (-180:0: 0.354 and 0.6)coordinate[pos=0.55] (al);%-- (b180) -- (b270);

 % \draw (a180) arc(-180:-154:0.354 and 0.6);
 %\draw (a180) arc(-180:0:0.354 and 0.6);
  \fill[fill opacity=0.3, gray, draw=black] (a0)
 --(a90) .. controls +(0,-0.8) and +(0, -0.8).. (a0) coordinate[pos=0.65] (ar);%-- (b180) -- (b270);
 \fill[fill opacity=0.3, gray, draw=black] 
 (b180) .. controls +(0,0.8) and +(0, 0.8).. (b270) coordinate[pos=0.35] (bl);%-- (b180) -- (b270);
 %\draw (a180) arc(-180:-154:0.354 and 0.6);
 %\draw (a180) arc(-180:0:0.354 and 0.6);
  \fill[fill opacity=0.3, gray, draw=black] 
 (b90) coordinate[pos=0.67] (b45)  .. controls +(0,0.8) and +(0,
 0.8).. (b0) coordinate[pos=0.35] (br);%-- (b180) -- (b270);
 %\fill[red, opacity=0.5] (bl) circle(1mm); 
 \draw[very thin] (bl)--(br);
 \draw[very thin] (al)--(ar);

\end{scope}

 \begin{scope}[zxplane=0, rotate =0]
\draw[very thin, dashed] (90:1) .. controls (90:0.5) and (0:0.5)
.. (0:1);
\draw[very thin, dashed] (270:1) .. controls (270:0.5) and (180:0.5) .. (180:1);
 \end{scope}

 \draw[double] (a90) -- (a0);
 \draw[double] (a270) -- (a180);
 \draw[double, densely dotted] (b270) -- (b180);
 %\draw[double, densely dotted] (b45) -- (b0);
 \draw[double] (b0) -- (b90);
\draw[densely dotted] (b90) -- (b180);
\draw[densely dotted] (b0) -- (b270) coordinate[pos=0.6] (b315);
\draw (b270)--(b315);
\draw (a0) -- (a270);
\draw (a90) -- (a180);

 \begin{scope}[zxplane=-0.85, rotate =0]
    \coordinate (d1) at (45:0.354);
  \end{scope}

  \begin{scope}[zxplane=0.89, rotate =0]
    \coordinate (d2) at (225:0.354);
  \end{scope}

      \draw[thick, orange] (b1) .. controls +(0,0.55) and +(0, 0.55)
    .. (b2) --cycle;

\node[scale = 0.7] at (d1) {$\bullet$};
\node[yshift=-0.35cm, scale = 0.7] at (d1) {$e_1$};
% \node[scale = 0.7] at (d2) {$\bullet$};
% \node[yshift = 0.3cm, scale = 0.7] at (d2) {$e_1$};

% \fill[opacity=0.3, gray] (a120) --(a0) -- (b0) -- (b120);
% \fill[opacity=0.3, gray] (a0) --(a240) -- (b240) -- (b0);
% \draw[double] (a0)--(a120);
% \draw[double] (a240)--(a120);
% \draw[double] (a0)--(a240);
% \draw[double,densely dotted] (b0)--(b120);
% \draw[double,densely dotted] (b120)--(b240);
% \draw[double] (b240) -- (b0);
% \draw (a0)--(b0);
\end{scope}}}}]   +
    \eval[{\NB{\tikz[ scale=1]{\begin{scope}
  \foreach \x in {270, 0, 90, 180}{
\begin{scope}[zxplane=1, rotate=0]
    \coordinate (a\x) at (\x:0.5);
    \coordinate (A\x) at (\x:1);
  \end{scope}
  \begin{scope}[zxplane=-1, rotate =0]
    \coordinate (b\x) at (\x:0.5);
    \coordinate (B\x) at (\x:1);
  \end{scope}
\draw (a\x) -- (A\x) -- (B\x) -- (b\x);
}

  \begin{scope}[zxplane=-0.9, rotate =0]
    \coordinate (b1) at (80:0.5);
    \coordinate (b2) at (10:0.5);
  \end{scope}

\begin{scope}[zyplane = 0]
 %\fill[opacity=0.3, white] (a90) --(a0)  -- (b0) -- (b90)  

 \fill[fill opacity=0.3, gray, draw=black] (a270)
 --(a180) .. controls +(0,-0.8) and +(0, -0.8).. (a270) coordinate [pos=0.65] (al);%-- (b180) -- (b270);
 % \fill[fill opacity=0.3, gray, draw=black] (a270)
 % --(a180) arc (-180:0: 0.354 and 0.6)coordinate[pos=0.55] (al);%-- (b180) -- (b270);

 % \draw (a180) arc(-180:-154:0.354 and 0.6);
 %\draw (a180) arc(-180:0:0.354 and 0.6);
  \fill[fill opacity=0.3, gray, draw=black] (a0)
 --(a90) .. controls +(0,-0.8) and +(0, -0.8).. (a0) coordinate[pos=0.65] (ar);%-- (b180) -- (b270);
 \fill[fill opacity=0.3, gray, draw=black] 
 (b180) .. controls +(0,0.8) and +(0, 0.8).. (b270) coordinate[pos=0.35] (bl);%-- (b180) -- (b270);
 %\draw (a180) arc(-180:-154:0.354 and 0.6);
 %\draw (a180) arc(-180:0:0.354 and 0.6);
  \fill[fill opacity=0.3, gray, draw=black] 
 (b90) coordinate[pos=0.67] (b45)  .. controls +(0,0.8) and +(0,
 0.8).. (b0) coordinate[pos=0.35] (br);%-- (b180) -- (b270);
 %\fill[red, opacity=0.5] (bl) circle(1mm); 
 \draw[very thin] (bl)--(br);
 \draw[very thin] (al)--(ar);

\end{scope}

 \begin{scope}[zxplane=0, rotate =0]
\draw[very thin, dashed] (90:1) .. controls (90:0.5) and (0:0.5)
.. (0:1);
\draw[very thin, dashed] (270:1) .. controls (270:0.5) and (180:0.5) .. (180:1);
 \end{scope}

 \draw[double] (a90) -- (a0);
 \draw[double] (a270) -- (a180);
 \draw[double, densely dotted] (b270) -- (b180);
 %\draw[double, densely dotted] (b45) -- (b0);
 \draw[double] (b0) -- (b90);
\draw[densely dotted] (b90) -- (b180);
\draw[densely dotted] (b0) -- (b270) coordinate[pos=0.6] (b315);
\draw (b270)--(b315);
\draw (a0) -- (a270);
\draw (a90) -- (a180);

 \begin{scope}[zxplane=-0.85, rotate =0]
    \coordinate (d1) at (45:0.354);
  \end{scope}

  \begin{scope}[zxplane=0.89, rotate =0]
    \coordinate (d2) at (225:0.354);
  \end{scope}

    %   \draw[thick, orange] (b1) .. controls +(0,0.55) and +(0, 0.55)
    % .. (b2) --cycle;

% \node[scale = 0.7] at (d1) {$\bullet$};
% \node[yshift=-0.35cm, scale = 0.7] at (d1) {$e_1$};
\node[scale = 0.7] at (d2) {$\bullet$};
\node[yshift = 0.3cm, scale = 0.7] at (d2) {$e_1$};

% \fill[opacity=0.3, gray] (a120) --(a0) -- (b0) -- (b120);
% \fill[opacity=0.3, gray] (a0) --(a240) -- (b240) -- (b0);
% \draw[double] (a0)--(a120);
% \draw[double] (a240)--(a120);
% \draw[double] (a0)--(a240);
% \draw[double,densely dotted] (b0)--(b120);
% \draw[double,densely dotted] (b120)--(b240);
% \draw[double] (b240) -- (b0);
% \draw (a0)--(b0);
\end{scope}}}}] 
  \end{align}
\end{lemma}

\begin{remark}
    Relations~\eqref{eq_rel_A1}--\eqref{eq_rel_6} have the form $\id_{\Gamma}=\iota\circ p$, where $\Gamma$ is the web at the top and bottom of the foams and the RHS of the relation factors into foams $\iota$ and $p$ between webs $\Gamma$ and $\Gamma'$, so that the RHS foam can be written as the composition $\Gamma\stackrel{p}{\lra}\Gamma'\stackrel{\iota}{\lra}\Gamma$. Web $\Gamma'$ is the middle cross-section of the foam on the right of the equation. Furthermore, 
    \[p\circ \iota=\id_{\Gamma'},
    \]
    which in each case can be checked by a direct computation. The foam $p\circ \iota$ is given by stacking the bottom half of the RHS foam above the top half of the RHS foam in eaach of these equations. For relations~\eqref{eq_rel_A2} and~\eqref{eq_rel_13} there exists a homeomorphism of webs $\Gamma\cong \Gamma'$ that extends to a homeomorphism of foams $\iota$ and $p$ allowing to derive the relation $p\circ\iota=\id_{\Gamma'}$ from $\iota\circ p =\id_{\Gamma}$. For equation~\eqref{eq_rel_A2} this homeomorphism reverses the orientation of $\R^2$ and $\R^2\times [0,1]$ (foam evaluation is invariant under orientation reversal in  $\R^2\times [0,1]$). 

    Likewise, relations~\eqref{eq_rel_2},~\eqref{eq_rel_3} in Lemma~\ref{lemma_rel_1} and relations~\eqref{eq_rel_7},~\eqref{eq_rel_8} in Lemma~\ref{lemma_rel_2} have the form $\id_{\Gamma}=\iota_1\circ p_1+\iota_2\circ p_2$, where $\Gamma\stackrel{p_i}{\lra}\Gamma'\stackrel{\iota_i}{\lra}\Gamma$ is the factorization of the $i$-th foam on the RHS of the relation, $i=1,2$. Relations 
    \[p_j \circ \iota_i = \delta_{i,j}\id_{\Gamma'}, \ \ i,j\in\{1,2\} 
    \]
    hold (direct computation).  

Relations~\eqref{eq_rel_10}--\eqref{eq_rel_12} in Lemma~\ref{lemma_rel_5} have the form 
\[\id_{\Gamma}=\iota_1\circ p_1+\iota_2\circ p_2,
\]
where $\Gamma\stackrel{p_i}{\lra}\Gamma_i\stackrel{\iota_i}{\lra}\Gamma$ is the factorization of the $i$-th foam on the RHS of the relation, $i=1,2$. Relations 
    \[p_j \circ \iota_i = \delta_{i,j}\id_{\Gamma_i}, \ i,j\in\{1,2\}. 
    \]
hold (direct computation). 
    Relation~\eqref{eq_rel_9} in Lemma~\ref{lemma_rel_5}
    has a similar form, 
    \[\id_{\Gamma}=\iota_1\circ p_1+\iota_2\circ p_2+\iota_3\circ p_3,
    \] and, additionally, relations $p_j \circ \iota_i = 0, \ j\not= i, \ \ p_i\circ \iota_i =\id_{\Gamma_i}$
    hold. 
\end{remark}

\section{State spaces}
\label{sec_state_sp}

Define the degree of a foam $U: \Gamma_0 \to \Gamma_1$ by the same formula as in Definition~\ref{def:degreeF}. 
If a foam $F$ with boundary and no dots admits a coloring $c$, formula~\eqref{eq:degfromsurfaces} holds for it as well. Presence of dots increases the degree of the foam by the sum of degrees of the dots. 

\begin{proposition} \label{prop_degree} For composable foams $U$ and $V$, 
\[\deg(UV) = \deg(U) + \deg(V).\] 
\end{proposition}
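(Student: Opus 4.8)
The plan is to localize the whole computation at the web $\Gamma=\partial_1U=\partial_0V$ along which $U$ and $V$ are glued. Every summand of Definition~\ref{def:degreeF} is attached to a cell of the foam (a dot, a facet, a seam interval, or a singular vertex), so the discrepancy $\deg(UV)-\deg(U)-\deg(V)$ is supported exactly where the cell structures of $U$ and $V$ are altered by the gluing, namely on $\Gamma$. First I would dispose of the manifestly additive terms: dots lie in facet interiors away from $\Gamma$, and every singular vertex sits in the interior of $U$ or of $V$; since the gluing neither creates, destroys, nor threads defect lines through any interior singular vertex, the number of singular vertices of each type — including the defect-parity split of the $1^42^2$-vertices — is simply added. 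Thus only the facet Euler-characteristic terms and the seam-interval counts can contribute to the discrepancy.

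The seam correction is immediate: at each $112$- (resp.\ $222$-) vertex $v$ of $\Gamma$ the product $\Gamma\times I$ carries a single seam germ, so a seam interval of $U$ ending at $v$ fuses with the one of $V$ ending at $v$ into one interval of $UV$; each such vertex lowers the corresponding seam count by one, contributing $-5\#_{112}(\Gamma)-6\#_{222}(\Gamma)$, where $\#_{112},\#_{222}$ count the $112$- and $222$-vertices of $\Gamma$. For the facet terms I would use the compactly supported Euler characteristic $\chi_c$, which is additive over locally closed decompositions, satisfies $\chi_c(\text{open }k\text{-cell})=(-1)^k$, and agrees with $\chi$ on each facet, so that $\sum_{\thickness(f)=i}\chi(f)=\chi_c(\widetilde R_i(F))$ for $\widetilde R_i(F)$ the thickness-$i$ facet-interior locus. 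Decomposing $UV=(U\setminus\Gamma)\sqcup\Gamma\sqcup(V\setminus\Gamma)$ then reduces the facet correction to $\chi_c$ of the facet-interior points of $UV$ lying on $\Gamma$.

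The crux — and the step I expect to be the main obstacle — is this last bookkeeping, and in particular the role of defects. In Definition~\ref{def:degreeF} a \emph{facet} is a $2$-cell insensitive to defect lines (unlike a \emph{region}, which they subdivide), so $\widetilde R_2$ \emph{contains} the defect lines. Hence on $\Gamma$ the thick facet-interior points are the $2$-edge interiors \emph{together with} the defect vertices (which lie on defect lines inside thick facets), while the $112$- and $222$-vertices are excluded as seam points; the thin facet-interior points on $\Gamma$ are just the $1$-edge interiors. Writing the facet correction as $-3\,\chi_c(\Gamma^{(1)}\setminus\{112\text{-vertices}\})-4\,\chi_c(\Gamma^{(2)}\setminus\{112\text{-},222\text{-vertices}\})$, with $\Gamma^{(i)}$ the thickness-$i$ subgraph, I would finish with two structural facts about webs: the thin subgraph $\Gamma^{(1)}$ is a closed $1$-manifold (exactly two $1$-edges meet at each $112$-vertex), hence a union of circles with $\chi_c(\Gamma^{(1)})=0$; and the handshake lemma for $\Gamma^{(2)}$, where $112$-, $222$- and defect vertices have thick-valency $1,3,2$, gives $\chi_c(\Gamma^{(2)})=\tfrac12(\#_{112}-\#_{222})$. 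Substituting turns the facet correction into $5\#_{112}+6\#_{222}$, which cancels the seam correction exactly, so the discrepancy vanishes and $\deg(UV)=\deg(U)+\deg(V)$.

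I would stress that using regions in place of facets here is precisely the trap to avoid: it suppresses the defect-vertex contribution and leaves a spurious term proportional to $\#_{\mathrm{def}}(\Gamma)$ — indeed it would assign nonzero degree to the identity foam on a thick circle carrying defects, which is impossible. As a reassuring cross-check valid for colorable foams, the same cancellation is visible through Lemma~\ref{lem:degree}: gluing the closed surfaces $F_{ij}$ along $\Gamma$ contributes $\sum_{i<j}\chi(\Gamma\cap F_{ij})$, and a direct tally of which cells of $\Gamma$ lie in each $F_{ij}$ (three pairs per $1$-edge, four per $2$-edge; five per $112$-, six per $222$-, four per defect vertex) shows this sum is $0$.
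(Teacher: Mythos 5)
Your proof is correct and is essentially the argument the paper intends: the paper's own proof is just a one-line reference to the $SL(3)$ analogue (\cite[Proposition~3.1]{KhovRob1}), which runs by the same local cancellation along the gluing web, and your write-up supplies the details in the $SL(4)$ setting. The bookkeeping checks out --- the facet correction $-3\chi_c\bigl(\Gamma^{(1)}\setminus\{112\text{-vertices}\}\bigr)-4\chi_c\bigl(\Gamma^{(2)}\setminus\{112\text{-},222\text{-vertices}\}\bigr)=5\#_{112}+6\#_{222}$ indeed cancels the seam correction (with seam circles correctly counting as zero intervals), and you rightly place the defect vertices of $\Gamma$ in thick facet interiors rather than on seams.
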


\begin{proof}
    The proof is analogous to that of \cite[Proposition~3.1]{KhovRob1}. 
\end{proof}

{\it State space of a foam.}
Following \cite{KhovRob1}, we define the state space of a web, see also \cite{BHMV,Kho04,RobWag} for related constructions and \cite{KhoKit} for a discussion. 

\begin{definition}
Given a web $\Gamma$, its state space $\functor[\Gamma]$ 
is a $\Z$-graded $R$-module generated by symbols $\functor[U]$, for all foams $U$ from the empty web $\emptyset_1$ to $\Gamma$.   A relation $\Sigma_i a_i \functor[U_i] = 0$ for $a_i \in R$ and $U_i \in \operatorname{Hom}_{\catFoam}(\emptyset_1, \Gamma)$ holds in $\functor[\Gamma]$ if and only if $\Sigma_i a_i \eval[VU_i] = 0$ for any foam $V\in \operatorname{Hom}_{\catFoam}(\Gamma,\emptyset_1)$.
\end{definition}

\begin{theorem}\label{thm_functor}
    A foam $F$ determines a graded $R$-module homomorphism 
    \begin{equation*}
        \functor(F) \ : \ \functor(\partial_0 F)\lra \functor(\partial_1 F) 
    \end{equation*}
    of degree $\deg(F)$. These maps fit together into a lax monoidal functor 
    \begin{equation*}
    \functor \ : \ \catFoam \lra R\mathsf{-gmod}
        \end{equation*}
    from the category of foams to the category of graded $R$-modules and homogeneous module homomorphisms. 
\end{theorem}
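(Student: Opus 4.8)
The plan is to verify the two assertions of the theorem separately: first that each foam $F$ induces a well-defined graded module map $\functor(F)$ of the stated degree, and second that this assignment is functorial and lax monoidal. The whole construction follows the universal-construction template (as in \cite{KhovRob1, RobWag}), so the argument is essentially formal once the foam evaluation $\functor$ of Section~\ref{subsec_evaluation} and the additivity of degree (Proposition~\ref{prop_degree}) are in place.

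First I would define the map on generators. A generator of $\functor(\partial_0 F)$ is a symbol $\functor[U]$ for a foam $U\colon \emptyset_1\to\partial_0 F$; set $\functor(F)(\functor[U]) := \functor[FU]$, where $FU\colon\emptyset_1\to\partial_1 F$ is the composite obtained by stacking $F$ on top of $U$. To see this descends to the quotient defining the state space, suppose $\sum_i a_i\functor[U_i]=0$ in $\functor(\partial_0 F)$, i.e.\ $\sum_i a_i\eval[W U_i]=0$ for every $W\colon\partial_0 F\to\emptyset_1$. For any closing foam $V\colon\partial_1 F\to\emptyset_1$, the composite $VF\colon\partial_0 F\to\emptyset_1$ is itself an admissible closing foam, so $\sum_i a_i\eval[(VF)U_i]=\sum_i a_i\eval[V(FU_i)]=0$; since this holds for all $V$, we get $\sum_i a_i\functor[FU_i]=0$ in $\functor(\partial_1 F)$. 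Thus $\functor(F)$ is a well-defined $R$-linear map, and its degree is $\deg(F)$ by Proposition~\ref{prop_degree} applied to $\deg(FU_i)=\deg(F)+\deg(U_i)$, together with the fact (from the Proposition following Definition~\ref{def:degreeF}) that $\eval$ is homogeneous of the degree of the foam.

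Next I would check functoriality and monoidality. For composable foams $F,G$ one has $\functor(GF)(\functor[U])=\functor[GFU]=\functor(G)(\functor[FU])=(\functor(G)\circ\functor(F))(\functor[U])$ on generators, and $\functor(\id_\Gamma)=\id_{\functor(\Gamma)}$ since $\id_\Gamma$ is the product web $\Gamma\times[0,1]$ and stacking it changes nothing up to isotopy. Lax monoidality amounts to producing natural maps $\functor(\Gamma_1)\otimes_R\functor(\Gamma_2)\to\functor(\Gamma_1\sqcup\Gamma_2)$ sending $\functor[U_1]\otimes\functor[U_2]\mapsto\functor[U_1\sqcup U_2]$ (using that a foam into a disjoint union can be taken as a disjoint union of foams), together with the unit map $R\to\functor(\emptyset_1)$, $1\mapsto\functor[\id_{\emptyset_1}]$; one then checks the associativity and unit coherence diagrams commute, which is immediate from the definition of $\otimes$ and the multiplicativity of $\functor$ on disjoint unions of closed foams.

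The main obstacle is not any single computation but the well-definedness step above: one must confirm that the set of closing foams $V\colon\partial_1 F\to\emptyset_1$ used to test relations in $\functor(\partial_1 F)$ is rich enough to be pulled back along $F$ to test relations in $\functor(\partial_0 F)$. This is exactly what makes the composite $VF$ range over a sufficient family of closing foams for the source web, and it is the crux that makes the universal construction functorial; everything else is bookkeeping. I would also remark that the map is genuinely only \emph{lax} monoidal because the comparison maps $\functor(\Gamma_1)\otimes_R\functor(\Gamma_2)\to\functor(\Gamma_1\sqcup\Gamma_2)$ need not be isomorphisms in general, the state space of a disjoint union being potentially larger than the tensor product of the pieces.
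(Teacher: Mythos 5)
Your proof is correct and is exactly the standard universal-construction argument that the paper relies on without writing out (it defers to \cite{KhovRob1}): define $\functor(F)$ on generators by stacking, check well-definedness against closing foams, get the degree from Proposition~\ref{prop_degree}, and build the lax monoidal structure from disjoint unions. One small clarification: the ``main obstacle'' you flag at the end is not actually an obstacle --- well-definedness does not require the composites $VF$ to exhaust or be ``rich enough'' among closing foams of $\partial_0 F$, only that each $VF$ is \emph{some} admissible closing foam, so the hypothesis $\sum_i a_i\eval[WU_i]=0$ for all $W$ applies to it directly, exactly as you in fact argue in your second paragraph.
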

We say that a graded module homomorphism is \emph{homogeneous of degree $i$} if it changes the degree of all homogeneous elements by $i$. 

\begin{proposition}
    Graded $R$-module $\functor(\Gamma)$ is finitely-generated, for any unoriented $SL(4)$ web $\Gamma$. 
\end{proposition}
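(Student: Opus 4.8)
The plan is to present $\functor(\Gamma)$ as a quotient of the free $R$-module on the (infinite) set of foams $U\colon\emptyset_1\to\Gamma$, and then to exhibit a \emph{finite} subset of such foams that generates everything over $R$ modulo the defining state-space relations; since producing a finite generating set is exactly what finite generation means (and $R=\ftwo[E_1,\dots,E_4]$ is Noetherian), this suffices. The tool for the reduction is the collection of local relations already established: the neck-cutting relations \eqref{eq_rel_0}--\eqref{eq_rel_1}, the sphere and theta evaluations computed in the Examples, and the local relations of Section~\ref{sec:neck-cutt-relat}. All of these hold in state spaces precisely because they are consequences of closed-foam evaluation.

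First I would reduce the decorations. On a thin facet the identity $X^4=E_1X^3+E_2X^2+E_3X+E_4$ holds, since $\prod_{j=1}^4(X+X_j)=X^4+E_1X^3+E_2X^2+E_3X+E_4$ vanishes at $X=X_i$ in characteristic two (the factor $X_i+X_i$ is zero); combined with the sphere evaluation $\functor[{\SS_1(\bullet^a)}]=H_{a-3}$, this lets any dot $\bullet^a$ with $a\ge 4$ be rewritten as an $R$-linear combination of $\bullet^0,\dots,\bullet^3$, with the coefficients in $R$ pulled out. The analogous reduction for Schur decorations on thick facets (each of the two variables again satisfying the degree-four relation) confines thick-facet labels to a finite set, such as $\YD(2,2)$. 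After this step every generator is, up to an $R$-coefficient, a foam whose facets carry labels from a fixed finite alphabet.

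Next I would trivialize the topology. Applying neck-cutting \eqref{eq_rel_0}--\eqref{eq_rel_1} along a compressing curve on any facet replaces the foam by an $R$-linear combination of foams in which that curve bounds, so iterating turns every facet into a disk and splits off every closed component; the closed components evaluate to elements of $R$ (via the sphere and theta-foam computations and the fact that $\deg(F)<0$ forces $\functor[F]=0$) and are absorbed into the coefficient. What survives is a \emph{reduced} foam: all facets are disks, all decorations lie in the finite alphabet, and the boundary is $\Gamma$.

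The main obstacle is the finiteness of the set of reduced foams. I would control it by a Morse/height-function argument: present $U$ along a generic projection to $[0,1]$ as a composite of elementary foams, so that $U$ becomes a movie of webs interpolating between $\emptyset_1$ and $\Gamma$, and then simplify each elementary transition using the local relations \eqref{eq_rel_A1}--\eqref{eq_rel_13} together with the digon, triangle and square relations of Lemmas~\ref{lemma_rel_1}, \ref{lemma_rel_2} and~\ref{lemma_rel_5}. The technical heart is to show that this rewriting terminates with only finitely many normal forms; equivalently, that an embedded reduced foam with all-disk facets and fixed boundary $\Gamma$ has its number of seams and singular vertices bounded in terms of $\Gamma$. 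Granting this bound, the finitely many normal forms generate $\functor(\Gamma)$ over $R$, which establishes finite generation.
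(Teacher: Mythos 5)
There is a genuine gap, and it sits exactly where you flag it: the claim that your rewriting procedure terminates with finitely many normal forms, equivalently that a reduced foam with disk facets and boundary $\Gamma$ has complexity bounded in terms of $\Gamma$. This is not a technical point to be granted --- it is the entire difficulty, and the paper's own results indicate that it cannot be established with the tools you invoke. The local relations of Section~\ref{sec:neck-cutt-relat} only simplify digons, triangles and certain squares; the paper exhibits a non-reducible web $\Gamma_{TST}$ precisely to show that these relations do \emph{not} suffice to reduce arbitrary webs, and states that the $R$-module $\functor(\Gamma)$ is unknown for every non-reducible web. A foam $U\colon\emptyset_1\to\Gamma$ can have arbitrarily complicated non-reducible webs as generic cross-sections, so a movie presentation processed by those relations has no reason to reach a finite list of normal forms; if it did, you would have an algorithm computing $\functor(\Gamma)$ for all $\Gamma$, which the paper lists as open. (Your preliminary steps --- reducing dot decorations via $X^4=E_1X^3+E_2X^2+E_3X+E_4$ and neck-cutting to make facets disks --- are fine, but they do not control the seam graph, which is where the unbounded complexity lives.)

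The paper's proof goes a different route entirely, by extending Propositions~3.6, 3.9 and Corollary~3.7 of the unoriented $SL(3)$ paper of Khovanov--Robert from three to four colors. The key idea is to decompose the pairing over colorings of the boundary web: for a closing foam $V$ one has $\eval[VU]=\sum_{c}\langle V,c\rangle\,\langle U,c\rangle$, where $c$ runs over the finitely many colorings of $\Gamma$ and $\langle U,c\rangle$ is the partial evaluation summing over foam colorings restricting to $c$. One then shows that each $\langle U,c\rangle$ lies in a fixed finitely generated $R$-submodule of $\ourring'$ (a fractional ideal $\frac{1}{D}R$ with denominator $D$ depending only on $\Gamma$), so that $U\mapsto(\langle U,c\rangle)_c$ induces an injection of $\functor(\Gamma)$ into a finitely generated $R$-module; Noetherianity of $R=\ftwo[E_1,\dots,E_4]$ then gives finite generation of the submodule. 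This sidesteps any normal-form or termination argument. If you want to salvage your approach, you would have to replace the termination claim by an argument of this kind; as written, the proposal does not prove the proposition.
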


\begin{proof}
    The proof follows by directly extending the proof of Proposition~3.9 in~\cite{KhovRob1}  (including Proposition~3.6 and Corollary~3.7 there) from three to four colors. We omit the details. 
\end{proof}

There is a canonical isomorphism $\functor \left(
  \emptyset_1\right)\cong \ourring \emptyset_2$ of $R$-bimodules. Here
$\emptyset_1$ is the empty web in $\R^2$ and $\emptyset_2$ is the
empty foam in $\R^2\times [0,1]$, thought of as a degree $0$ generator
of the rank one free $R$-module $\functor(\emptyset_1)$.

The lemmas and remarks of Section~\ref{sec:neck-cutt-relat} translate
into:

\begin{theorem} \label{thm_dir_sum_rels} Relations in Proposition \ref{prop:relations}  can be lifted to the isomorphisms below. 
\begin{align}
\label{eq_cong_circles} 
    \functor \left(\NB{\tikz[]{}} \right) &\cong [4] , &&& 
    \functor\left(\NB{\tikz[]{}}\right) &\cong \frac{[3][4]}{[2]} , \\
\label{eq_liftloop_zero}
    \functor\left(\NB{\tikz[]{}}\right) &= 0, &&&
\functor\left(\NB{\tikz[]{}}\right) &= 0, \\ 
\label{eq_liftdigon_1}
    \functor \left(\NB{\tikz[]{}}\right) & \cong 
[2]\, \functor\left(\NB{\tikz[]{}}\right), &&&
    \functor\left(\NB{\tikz[]{}}\right) &\cong
    [2]\, \functor\left(\NB{\tikz[]{}}\right), \\
\label{eq_liftdigon_2}
    \functor\left(\NB{\tikz[]{}}\right) &\cong
    [3]\, \functor\left(\NB{\tikz[]{}}\right), &&& \functor\left(\NB{\tikz[]{}}\right) &=
    0, 
    \\
\label{eq_liftdefects}
    \functor\left(\NB{\tikz[]{}}\right) &\cong
    \functor\left(\NB{\tikz[]{}}\right), &&& \functor\left(\NB{\tikz[]{}}\right) &\cong \functor\left(\NB{\tikz[]{}}\right), \\
\label{eq_liftquadruple_v}\functor\left( \NB{\tikz[]{\input{\imagesfolder/4v_dumbell-bullet}}}\right) &\cong
    \functor\left(\NB{\tikz[rotate=90]{\input{\imagesfolder/4v_dumbell-bullet}}}\right) \\
\label{eq_lifttriangle_1}
\functor\left(\NB{\tikz[]{}}\right) & \cong \functor\left(\NB{\tikz[]{}}\right),   &&& \functor\left(\NB{\tikz[]{}}\right) & \cong 2\, \functor\left(\NB{\tikz[]{}}\right),   \\
\label{eq_lifttriangle_2}
\functor\left(\NB{\tikz[]{}}\right) &\cong \functor\left(\NB{\tikz[]{}}\right),   &&&  \functor\left(\NB{\tikz[]{}}\right) &\cong [2]\,\functor\left(\NB{\tikz[]{}}\right),  
\end{align}\vspace{-0.35cm}
\begin{align}
\label{eq_liftsquare_1}
\functor\left( \NB{\tikz[]{}}\right)&\cong
    \functor\left( \NB{\tikz[]{}}\right) \oplus
    \functor\left( \NB{\tikz[]{}}\right),\\
    \label{eq_liftsquare_2}
\functor\left( \NB{\tikz[]{}}\right)&\cong
    \functor\left( \NB{\tikz[]{}}\right) \oplus
    \functor\left( \NB{\tikz[]{\input{\imagesfolder/12v12}}}\right), \\
    \label{eq_liftsquare_3}
    \functor\left( \NB{\tikz[]{}}\right)&\cong
    \functor\left( \NB{\tikz[rotate=90]{}}\right) \oplus
    [2]\, \functor\left( \NB{\tikz[]{}}\right), \\
    \label{eq_liftsquare_4}
    \functor\left( \NB{\tikz[]{\input{\imagesfolder/Square2211}}}\right) &\cong
    \functor\left( \NB{\tikz[rotate=90]{}}\right) \oplus
    \functor\left( \NB{\tikz[]{}}\right), 
\end{align}
\end{theorem}

The first term on the RHS of~\eqref{eq_liftsquare_2} can be further simplified via~\eqref{eq_liftsquare_1}. 
Note that the last relation~\eqref{eq_sq5} in Proposition~\ref{prop:relations} is excluded from the above proposition---the authors did not find the corresponding direct sum decomposition for state spaces. Quantum integer 
\[
[n]:= \frac{q^n-q^{-n}}{q-q^{-1}}= q^{n-1}+q^{n-3}+\dots +q^{1-n}. 
\]
Powers of $q$ correspond to grading shifts, with the notation 
\[
f(q)V\ := \ \oplus_{i\in \Z} V^{a_i}\{i\}, \ \ f(q)=\sum_i a_i q^i, \ a_i\in \mathbb{N}, 
\]
for the sum of copies of a $\Z$-graded module $V$ with multiplicities given by coefficients of the polynomial $f(q)\in\mathbb{N}[q,q^{-1}]$.

Relations \eqref{eq_cong_circles} say that the state space of a web $\Gamma$ with a thin, respectively thick, innermost circle is isomorphic to 
four, respectively six copies of the state space of that web without the circle, placed in degrees $[4]=q^3+q+q^{-1}+q^{-3}$ and $[3][4]/[2]=(q^2+1+q^{-2})(q^2+q^{-2})$.

\begin{remark}
State spaces of thin and thick circles $\tau(\NB{\tikz[]{}})$ and $\tau(\NB{\tikz[]{}})$ are  commutative Frobenius $R$-algebras which are isomorphic to $U(4)$-equivariant cohomology algebras, over $\mathbb{F}_2$, of the complex 
projective space $\mathbb{CP}^3$ and the complex Grassmannian $\mathsf{Gr}(2,2)$ of two-dimensional subspaces in $\mathbb{C}^4$. The ground ring $R$ is isomorphic to $U(4)$-equivariant cohomology of a point $p$: 
\[
R\cong \mathsf{H}^{\ast}_{U(4)}(p,\mathbb{F}_2), \ \ \ 
\functor(\NB{\tikz[]{}}) \cong \mathsf{H}^{\ast}_{U(4)}(\mathbb{CP}^3,\mathbb{F}_2), \ \ \ 
\functor(\NB{\tikz[]{}}) \cong \mathsf{H}^{\ast}_{U(4)}(\mathsf{Gr}(2,2),\mathbb{F}_2). 
\]
Frobenius algebra structure comes from the functor $\functor$ applied to unknotted surface cobordisms in $\R^2\times [0,1]$ between unions of innermost circles. 

Furthermore, equivariant cohomology of partial and full flag varieties
\[
\mathsf{Fl}_{12}=\{L_1\subset L_2\subset \C^4|\dim L_i=i\}, \ \ 
\mathsf{Fl}_{123}= \{L_1\subset L_2\subset L_3\subset \C^4|\dim L_i=i\}
\]
can be identified with state spaces of the 112 theta-web
$\NB{\tikz[scale=0.35]{\begin{scope}[rotate=90]
  \draw[double] (0,0) -- (1,0);
  \draw (0.5,0) circle (0.5cm and 0.7cm);
\end{scope}}} $ and the state space of the
web \NB{\tikz[scale=0.35]{\begin{scope}[xscale=1.3]
  \draw (1,1) arc (90:-90:0.5) -- cycle;
  \draw (0,0) arc (270:90:0.5) --cycle;
  \draw[double] (0,1) -- (1,1) node[pos=0.5,yshift=-0.1mm] {\myvertex};
  \draw[double] (0,0) -- (1,0) node[pos=0.5,yshift=-0.1mm] {\myvertex};
\end{scope}}}.\[
\functor[{\NB{\tikz[scale=0.35]{}}}] \cong \mathsf{H}^{\ast}_{U(4)}(\mathsf{Fl}_{12},\mathbb{F}_2), \ \ \ 
\functor[{\NB{\tikz[scale=0.35]{}}}] \cong \mathsf{H}^{\ast}_{U(4)}(\mathsf{Fl}_{123},\mathbb{F}_2). 
\]

\end{remark}

Introduce a rather naive complexity function on webs by assigning to a web $\Gamma$ the number $n_e(\Gamma)$  of edges of $\Gamma$. Circles count as edges. The LHS to RHS transformations in Theorem~\ref{thm_dir_sum_rels} do not increase this complexity function and strictly reduce it in all equations except \eqref{eq_liftquadruple_v}, the second equation in \eqref{eq_liftdefects}, and using \eqref{eq_liftsquare_1} to further  reduce \eqref{eq_liftsquare_2}. 

\begin{definition}
   Let $\Webs$ be the set of webs up to planar isotopy. We define \emph{reducible} finite subsets $S\subset \Webs$ inductively as follows: 
   \begin{itemize}
   \item The empty subset is reducible. 
   \item The subset $\{\emptyset_1\}$ which consists of the empty web
     is reducible.
   \item Any  non-colorable web ($\ncol=0$) is
     reducible.
   \item If $S$ is reducible and $T\subseteq S$, $T$ is reducible. 
   \item If $S$ and $T$ are reducible then $S\cup T$ is reducible.
   \item Given a finite subset $S\subset \Webs$ and a graph $\Gamma\subset S$, suppose a portion of $\Gamma$ is as shown on the LHS of one of the equations in Theorem~\ref{thm_dir_sum_rels}. Let $s(\Gamma)$ be the subset of graphs on the RHS of the corresponding formula. If RHS is $0$, set $s(\Gamma)$ to be the empty subset. If $(S\setminus \{\Gamma\}) \cup s(\Gamma)$ is reducible, then $S$ is reducible. 
   \end{itemize}
We say that $\Gamma$ is reducible if the set $\{\Gamma\}$ is reducible.
\end{definition}

The idea for this definition is to keep reducing a web by going from a web in the LHS of one of the relations in Theorem~\ref{thm_dir_sum_rels} to the set of webs on the RHS of these relations until each web is reduced to either the empty web or some webs having no colorings for obvious reasons (see for example \eqref{eq_liftloop_zero}) and can be removed. 

\vspace{0.07in} 

Suppose that $\functor(\Gamma)$ is a free graded $R$-module. Define 
\begin{equation*}
    t_q(\Gamma) \ := \ \mathrm{grank}(\functor(\Gamma)) \ \in \N[q,q^{-1}]. 
\end{equation*}
to be the graded rank of $\functor(\Gamma)$

\begin{proposition}
    Suppose the web $\Gamma$ is reducible. Then the state space $\functor[\Gamma]$ is a free graded $R$-module. Its graded rank $t_q(\Gamma)$  can be computed inductively via Theorem~\ref{thm_dir_sum_rels} by replacing direct sums with sums. Thus, isomorphisms of state spaces in that proposition descend to skein relations on $t_q$. 
  \end{proposition}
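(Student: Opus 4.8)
The plan is to induct on the finite derivation witnessing reducibility. Since reducibility is defined for finite subsets $S\subseteq\Webs$ and clauses (4)--(6) of the definition manipulate such subsets, I would first lift the statement to the set level. For a finite $S\subseteq\Webs$ let $P(S)$ be the assertion: for every $\Gamma\in S$ the module $\functor[\Gamma]$ is free graded, and its graded rank equals the value produced by the reduction process (each $\oplus$ in Theorem~\ref{thm_dir_sum_rels} replaced by $+$ and each shift polynomial $[n]$ kept as is). I would then prove that $P(S)$ holds for every reducible $S$, by induction on the structure of the derivation that $S$ is reducible; the proposition is the special case $S=\{\Gamma\}$.

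For the base cases, $P(\emptyset)$ is vacuous, and for $S=\{\emptyset_1\}$ the canonical isomorphism $\functor[\emptyset_1]\cong R$ gives a free module of graded rank $1=t_q(\emptyset_1)$. For a non-colorable web $\Gamma$ (the clause $\ncol(\Gamma)=0$) I would show $\functor[\Gamma]=0$ directly from the definition of the state space: for any generator $\functor[U]$ with $U\colon\emptyset_1\to\Gamma$ and any closing foam $V\colon\Gamma\to\emptyset_1$, the closed foam $VU$ contains $\Gamma$ as a generic cross-section, so any coloring of $VU$ would restrict to a coloring of $\Gamma$; as there are none, $\eval[VU]=0$ for all $V$, whence the relation $1\cdot\functor[U]=0$ holds in $\functor[\Gamma]$ for every generator, and $\functor[\Gamma]=0$ is free of rank $0=t_q(\Gamma)$. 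Clauses (4) and (5) are then immediate at the set level: if $P(S)$ holds and $T\subseteq S$ every web of $T$ lies in $S$, giving $P(T)$; and if $P(S)$ and $P(T)$ hold every web of $S\cup T$ lies in $S$ or in $T$, giving $P(S\cup T)$.

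The inductive heart is clause (6). Here $S$ is reducible because, for some $\Gamma\in S$ carrying a local piece matching the LHS of a relation in Theorem~\ref{thm_dir_sum_rels}, the set $S':=(S\setminus\{\Gamma\})\cup s(\Gamma)$ is reducible, so by the induction hypothesis $P(S')$ holds. Every web of $S\setminus\{\Gamma\}\subseteq S'$ already has a free state space of the prescribed graded rank, so it remains only to treat $\Gamma$. Theorem~\ref{thm_dir_sum_rels} provides a degree-$0$ isomorphism writing $\functor[\Gamma]$ as a finite direct sum of grading-shifted copies of the modules $\functor[\Gamma']$ for $\Gamma'\in s(\Gamma)$ (multiplicities recorded by the $[n]$ coefficients; when the RHS is $0$ one gets $\functor[\Gamma]=0$). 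Each $\functor[\Gamma']$ is free by $P(S')$, a grading shift of a free graded module is free, and a finite direct sum of free graded modules is free, so $\functor[\Gamma]$ is free. Passing to graded ranks and using their additivity over $\oplus$ and compatibility with shifts turns the isomorphism into exactly the claimed $t_q$-identity with $\oplus$ replaced by $+$, which also establishes the last sentence of the proposition.

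Finally, I would observe that the computed graded rank does not depend on the chosen reduction: $R\cong\ftwo[E_1,\dots,E_4]$ is non-negatively graded with $R_0=\ftwo$ a field, so the graded rank of a free graded $R$-module is a well-defined invariant (read off from $\functor[\Gamma]\otimes_R\ftwo$, where $\ftwo=R/R_{>0}$), and every valid reduction computes this same invariant. Specializing $q=1$ recovers the relations of Proposition~\ref{prop:relations} (for instance $[4]|_{q=1}=4$ and $([3][4]/[2])|_{q=1}=6$), so $t_q(\Gamma)|_{q=1}=t_4(\Gamma)$ for reducible $\Gamma$. The only genuinely delicate point is the non-colorable base case, where one must argue from the construction of the state space that $\functor[\Gamma]$ vanishes; everything else is a formal propagation of freeness through the decompositions of Theorem~\ref{thm_dir_sum_rels}, whose locality --- legitimized by the mutually inverse maps $\iota,p$ of the preceding Remark --- is what allows each relation to be applied inside an arbitrary ambient web.
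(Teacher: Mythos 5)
Your proof is correct and follows exactly the argument the paper intends: the paper states this proposition without a written proof, treating it as immediate from the inductive definition of reducibility together with the direct-sum decompositions of Theorem~\ref{thm_dir_sum_rels}, and your induction on the reducibility derivation (with the vanishing of $\functor[\Gamma]$ for non-colorable $\Gamma$ handled via the restriction of foam colorings to cross-sections) fills in precisely those details.
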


  \begin{example}
    \[
      t_q\left( \, \NB{\tikz[scale=0.7]{\begin{scope}
  \foreach \x in{45, 135, 225, 315}{
    \coordinate (a\x) at (\x:0.5);
    \coordinate (A\x) at (\x:1);
  }
    \draw[double] (a225) -- (a135);
    \draw[double] (A225) -- (A135);
    \draw[double] (a45) -- (A45);
    \draw[double] (a315) -- (A315);
  \draw (A45) -- (A135) -- (a135) -- (a45) -- (a315) -- (a225) --
  (A225) -- (A315) -- cycle;
\end{scope}}} \, \right) =
      t_q\left( \, \NB{\tikz[scale=0.7]{\begin{scope}
  \foreach \x in{45, 135, 225, 315}{
    \coordinate (a\x) at (\x:0.5);
    \coordinate (A\x) at (\x:1);
  }
%    \draw[double] (a225) -- (a135);
%    \draw[double] (A225) -- (A135);
    \draw[double] (a45) -- (A45);
    \draw[double] (a315) -- (A315);
  \draw (A45) -- (A135) -- (a135) -- (a45) -- (a315) -- (a225) --
  (A225) -- (A315) -- cycle;
\end{scope}}} \, \right) +
      [2]t_q\left( \, \NB{\tikz[scale=0.7]{\begin{scope}
  \foreach \x in{45, 135, 225, 315}{
    \coordinate (a\x) at (\x:0.5);
    \coordinate (A\x) at (\x:1);
  }
   \draw (a225) -- (a135);
   \draw (A225) -- (A135);
    \draw[double] (a45) -- (A45);
    \draw[double] (a315) -- (A315);
    \draw (A45) -- (A135);
    \draw (a135) -- (a45) -- (a315) -- (a225);
    \draw (A225) -- (A315) -- (A45);
\end{scope}}} \, \right)
      =[3][4]\left([3] + [2]^2\right)
    \]
  \end{example}

For unoriented reducible $SL(3)$ webs the corresponding quantum invariant was considered by Mrudul~\cite{Mrudul} who, in particular, gave high lower bounds for the ranks of $\Z[q,q^{-1}]$-modules of such webs with $n\le 8$ boundary points. The relations on webs with boundary appear via the bilinear pairing on these modules given by computing the quantum invariants on closed webs glued from a pair of diagrams with the same boundary.   

\begin{remark} It is straightforward to extend the invariant $t_q(\Gamma)$ to all webs $\Gamma$. To do so, take the finitely-generated graded $R$-module $\functor(\Gamma)$ and choose a finite length resolution $\functor^{\bullet}(\Gamma)$ of it by finitely-generated graded $R$-modules. This is possible since graded ring $R$ has finite homological dimension (equal to 4). Define $t_q(\Gamma)$ as the graded Euler characteristic of that resolution: 
\[
t_q(\Gamma) \ := \ \sum^n_{i=0}(-1)^i \mathrm{grank}(\tau^{-i}(\Gamma)) \in \Z[q,q^{-1}], 
\]
where the resolution lies in degrees $-n, -n+1,\dots, 0$. 
The main problem with this definition and, more generally, with understanding $\functor(\Gamma)$ is the absence of an algorithm or results to determine $\functor[\Gamma]$ for non-reducible webs. In fact, we can not identify the $R$-module $\functor[\Gamma]$ even for a single non-reducible web $\Gamma$. A related question is whether $\functor[\Gamma]$ is a free graded $R$-module for any web $\Gamma$. 
\end{remark} 

Relatedly, consider the disjoint union $\Gamma_1\sqcup \Gamma_2$ of two webs. There is a natural homomorphism of graded $R$-modules 
\[
\functor(\Gamma_1)\otimes_R \functor(\Gamma_2) \lra \functor(\Gamma_1\sqcup \Gamma_2) 
\]
which is an isomorphism if either $\Gamma_1$ or $\Gamma_2$ is reducible. We don't know whether this map is an isomorphism for general $\Gamma_1,\Gamma_2$. Specializing $\Gamma_2=\Gamma_1^!$ to the reflection of $\Gamma$ in the plane, one can ask whether the $\Gamma_1$-tube foam (bent identity foam on $\Gamma_1$) from $\emptyset_1$ to $\Gamma_1\sqcup \Gamma_1^!$ is in the image of the homomorphism 
\[
\functor(\Gamma_1)\otimes_R \functor(\Gamma_1^!) \lra \functor(\Gamma_1\sqcup \Gamma_1^!). 
\]

An example of a non-reducible web is shown in Figure \ref{fig:ourgraph}. This web $\Gamma_{TST}$ is the graph of a solid called \emph{truncated square trapezohedron}. It has 16 vertices of type 112, two squared regions and 8 five-sided regions. $\Gamma_{TST}$ admits no 4-colorings with exactly four pigments. However, it can be colored with three and two pigments, as described in the second row of Figure~\ref{fig:ourgraph}. Observe that in the $SW$ corner of the figure each of the three pigments constitutes a single cycle, while in the $SE$ corner one of the pigments constitutes two cycles.

\begin{figure}
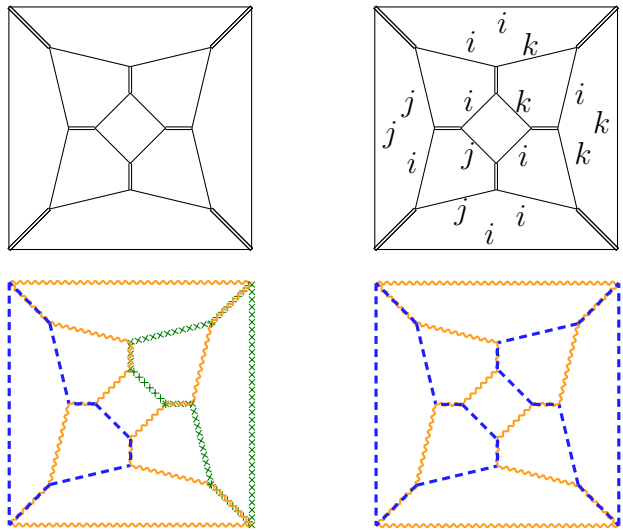

    \centering
\NB{\tikz[scale=0.9]{\input{\imagesfolder/4v_ourgraph}}}  \quad \quad \quad \,  \NB{\tikz[scale=0.9]{\input{\imagesfolder/4v_ourgraph_c1}}} \\ \vspace{0.3cm} \NB{\tikz[scale=0.9]{\input{\imagesfolder/4v_ourgraph_c2}}}  \quad \quad \quad\,  \NB{\tikz[scale=0.9]{\input{\imagesfolder/4v_ourgraph_c3}}}  
\caption{\small{Top left: the non-reducible web $\Gamma_{TST}$. Top right: a 3-coloring. Bottom left: $i$, $j$ and $k$ cycles of the coloring shown in yellow, blue and green, respectively. Bottom right: relabeling $k$ to $j$ (green to blue) results in a 2-coloring. Here $\{i,j,k\}\in \pigments$ and $i\neq j,k$. Any 4-coloring of $\Gamma_{TST}$ is obtained from these two colorings via permutations of colors and symmetries of the graph $\Gamma_{TST}$ in $\SS^2$. }} 
\label{fig:ourgraph}
\end{figure}

Any coloring $c$ of $\Gamma_{TST}$ gives rise to three cycles in the graph and exactly one of them is a Hamiltonian cycle in $\Gamma_{TST}$. In Figure~\ref{fig:ourgraph} examples, it's the $i$-cycle, shown in blue. 
A Kempe move from $c$ to $c'$ does not change this Hamiltonian cycle. One can check that, in this way, Kempe-equivalence classes of $\Gamma_{TST}$-colorings are in a bijection with Hamiltonian paths in this graph, of which there are four. 
Each of these classes contains 36 colorings, including 24 tricolorings and 12 bicolorings, for the total of 144 colorings, so that $t_4(\Gamma_{TST})=144$. The symmetric group $S_4$ acts transitively on each of these sets of 24 and 12 colorings, respectively. 

Web $\Gamma_{TST}$ is the non-reducible web with the smallest number of vertices that we were able to construct. 

\begin{remark}
We do not know the state space $\functor(\Gamma_{TST})$. It is possible to add new types of vertices to foams which are cones over $\Gamma_{TST}$ decorated by a Kempe-equivalence class of $\Gamma_{TST}$-colorings to define generalized $SL(4)$ webs and their evaluations, similar to the setup of \cite{KhovRob2}. In this enhanced setup, the state space $\functor(\Gamma_{TST})$ is isomorphic to the sum of four copies of the state space of the web \NB{\tikz[scale=0.35]{\begin{scope}
  \draw (0,1) -- (1,1) arc (90:-90:0.5) -- (0,0) arc (270:90:0.5);
  \draw[double] (0,0) -- (0,1);
  \draw[double] (1,0) -- (1,1);
\end{scope}}}, 
\[
\functor[\Gamma_{TST}] \ \cong \ 4 \, \functor[{\NB{\tikz[scale=0.35]{}}}]. \  
\] 
Colorings in a Kempe-equivalence class of $\Gamma_{TST}$-colorings are
in a bijection with colorings of the web
\NB{\tikz[scale=0.35]{}} with a natural match on cycles in the two colorings, leading to an isomorphism of state spaces.   
\end{remark}

\section{Tutte relations and localization}
\label{sec_localization}

\subsection{Periodic complexes}
\label{subsec_periodic}

Unfortunately functor $\eval$ does not properly categorify the Tutte
relations listed in Proposition~\ref{prop_tutte}. Instead there are
4-periodic complexes which are, in general, not exact lifting these relations. For instance, relation \eqref{eq_tutte_1} has a
categorical analogue given by the following 4-periodic chain
complex. Differentials in that complex are homogeneous, of degrees 1,
3, 1, 1, respectively, clockwise starting from the top differential.

\begin{equation} \label{eq:4periodic-complex}
  \tikz[xscale =6, yscale =3, >=to]{
    \node (A) at (0,1) {$\functor[{\NB{\tikz[]{}}}]$};
    \node (B) at (1,1) {$\functor[{\NB{\tikz[]{}}}]$};
    \node (C) at (1,0) {$\functor[{\NB{\tikz[rotate=90]{}}}]$};
    \node (D) at (0,0) {$\functor[{\NB{\tikz[rotate=90]{}}}]$}; 
    \draw[white] (A) -- (B) node[pos=0.5, above] {\NB{\tikz[scale=0.9,black]{\begin{scope}
    \foreach \x in {45, 135, 225, 315}{
\begin{scope}[zxplane=-0.5, rotate=90]
    \coordinate (a\x) at (\x:0.5);
    \coordinate (A\x) at (\x:1);
  \end{scope}
  \begin{scope}[zxplane=0.5, rotate =90]
    \coordinate (b\x) at (\x:0.5);
    \coordinate (B\x) at (\x:1);
  \end{scope}
  \draw (A\x)--(B\x);}
\draw (A45) -- ($0.5*(a45)+0.5*(a135)$) --(A135);
\draw (A225) -- ($0.5*(a225)+0.5*(a315)$) --(A315);
\filldraw[fill = gray, fill opacity=0.3] ($0.5*(a45)+0.5*(a135)$)
.. controls +(0,0.7,0) and +(0, 0.7, 0) ..
($0.5*(a225)+0.5*(a315)$) coordinate[pos=0.5] (o) --cycle;
\draw[double] ($0.5*(a45)+0.5*(a135)$) -- ($0.5*(a225)+0.5*(a315)$); 
% .. controls (a45) and (a135) .. (A135) coordinate
% [pos=0.5] (a1);
% \draw (A225) .. controls (a225) and (a315) .. (A315) coordinate
% [pos=0.5] (a2);
\draw (B45) .. controls (b45) and (b315) .. (B315) coordinate
[pos=0.5] (b1);
\draw (B225) .. controls (b225) and (b135) .. (B135) coordinate
[pos=0.5] (b2);
\draw[very thin,densely dashed] (b1) .. controls +(0,-0.2, 0) and
+(0,0,0) .. (o);
\draw[very thin,densely dashed] (b2) .. controls +(0,-0.2, 0) and
+(0,0,0) .. (o);
% \draw[very thin,densely dashed] (a1) .. controls +(0,-0.2,0) and
% +(0,0,-0.2) .. (o)  .. controls +(0,0,0.20) and
% +(0,-0.2,0) .. (a2);
% \draw[very thin, densely dashed] (b1) .. controls +(0,0.2,0) and
% +(0.2,0,0) .. (o)  .. controls +(-0.20,0,0) and
% +(0,0.2,0) .. (b2);
\end{scope}

%%% Local Variables:
%%% mode: latex
%%% TeX-master: t
%%% End:
}}};
    \draw[white] (B) -- (C) node[pos=0.5, right] {\NB{\tikz[scale=0.9,black]{\begin{scope}
    \foreach \x in {45, 135, 225, 315}{
\begin{scope}[zxplane=0.5, rotate=90]
    \coordinate (a\x) at (\x:0.5);
    \coordinate (A\x) at (\x:1);
  \end{scope}
  \begin{scope}[zxplane=-0.5, rotate =90]
    \coordinate (b\x) at (\x:0.5);
    \coordinate (B\x) at (\x:1);
  \end{scope}
  \draw (A\x)--(B\x);}
\draw (A45) .. controls (a45) and (a135) .. (A135) coordinate
[pos=0.5] (a1);
\draw (A225) .. controls (a225) and (a315) .. (A315) coordinate
[pos=0.5] (a2);
\draw (B45) .. controls (b45) and (b315) .. (B315) coordinate
[pos=0.5] (b1);
\draw (B225) .. controls (b225) and (b135) .. (B135) coordinate
[pos=0.5] (b2);
\coordinate (o) at (0,0,0);
\draw[very thin,densely dashed] (a1) .. controls +(0,-0.2,0) and
+(0,0,-0.2) .. (o)  .. controls +(0,0,0.20) and
+(0,-0.2,0) .. (a2);
\draw[very thin, densely dashed] (b1) .. controls +(0,0.2,0) and
+(0.2,0,0) .. (o)  .. controls +(-0.20,0,0) and
+(0,0.2,0) .. (b2);
\end{scope}

}}};
    \draw[white] (C) -- (D) node[pos=0.5, below] {\NB{\tikz[scale=0.9,black]{\begin{scope}
    \foreach \x in {45, 135, 225, 315}{
\begin{scope}[zxplane=0.5, rotate=0]
    \coordinate (a\x) at (\x:0.5);
    \coordinate (A\x) at (\x:1);
  \end{scope}
  \begin{scope}[zxplane=-0.5, rotate =0]
    \coordinate (b\x) at (\x:0.5);
    \coordinate (B\x) at (\x:1);
  \end{scope}
  \draw (A\x)--(B\x);}
\draw (A45) -- ($0.5*(a45)+0.5*(a135)$) --(A135);
\draw (A225) -- ($0.5*(a225)+0.5*(a315)$) --(A315);
\filldraw[fill = gray, fill opacity=0.3] ($0.5*(a45)+0.5*(a135)$)
.. controls +(0,-0.7,0) and +(0, -0.7, 0) ..
($0.5*(a225)+0.5*(a315)$) coordinate[pos=0.5] (o) --cycle;
\draw[double] ($0.5*(a45)+0.5*(a135)$) -- ($0.5*(a225)+0.5*(a315)$); 
% .. controls (a45) and (a135) .. (A135) coordinate
% [pos=0.5] (a1);
% \draw (A225) .. controls (a225) and (a315) .. (A315) coordinate
% [pos=0.5] (a2);
\draw (B45) .. controls (b45) and (b315) .. (B315) coordinate
[pos=0.5] (b1);
\draw (B225) .. controls (b225) and (b135) .. (B135) coordinate
[pos=0.5] (b2);
\draw[very thin,densely dashed] (b1) .. controls +(0,0.2, 0) and
+(0,0,0) .. (o);
\draw[very thin,densely dashed] (b2) .. controls +(0,0.2, 0) and
+(0,0,0) .. (o);
% \draw[very thin,densely dashed] (a1) .. controls +(0,-0.2,0) and
% +(0,0,-0.2) .. (o)  .. controls +(0,0,0.20) and
% +(0,-0.2,0) .. (a2);
% \draw[very thin, densely dashed] (b1) .. controls +(0,0.2,0) and
% +(0.2,0,0) .. (o)  .. controls +(-0.20,0,0) and
% +(0,0.2,0) .. (b2);
\end{scope}

%%% Local Variables:
%%% mode: latex
%%% TeX-master: t
%%% End:
}}};
    \draw[white] (D) -- (A) node[pos=0.5, left]
    {\NB{\tikz[scale=0.9,black]{\begin{scope}
    \foreach \x in {45, 135, 225, 315}{
\begin{scope}[zxplane=-0.5, rotate=0]
    \coordinate (a\x) at (\x:0.5);
    \coordinate (A\x) at (\x:1);
  \end{scope}
  \begin{scope}[zxplane=0.5, rotate =0]
    \coordinate (b\x) at (\x:0.5);
    \coordinate (B\x) at (\x:1);
  \end{scope}
  \draw (A\x)--(B\x);}
\draw (A45) -- ($0.5*(a45)+0.5*(a135)$) --(A135);
\draw (A225) -- ($0.5*(a225)+0.5*(a315)$) --(A315);
\draw (B45) -- ($0.5*(b45)+0.5*(b315)$) --(B315);
\draw (B225) -- ($0.5*(b225)+0.5*(b135)$) --(B135);
\coordinate (o) at (0,0,0);
\filldraw[fill = gray, fill opacity=0.3] ($0.5*(a45)+0.5*(a135)$) --(o)--
($0.5*(a225)+0.5*(a315)$) -- cycle;
\filldraw[fill = gray, fill opacity=0.3] ($0.5*(b45)+0.5*(b315)$) --(o)--
($0.5*(b225)+0.5*(b135)$) -- cycle;
\draw[double] ($0.5*(a45)+0.5*(a135)$) -- ($0.5*(a225)+0.5*(a315)$);
\draw[double] ($0.5*(b45)+0.5*(b315)$) -- ($0.5*(b225)+0.5*(b135)$); 
% .. controls (a45) and (a135) .. (A135) coordinate
% [pos=0.5] (a1);
% \draw (A225) .. controls (a225) and (a315) .. (A315) coordinate
% [pos=0.5] (a2);
% \draw (B45) .. controls (b45) and (b315) .. (B315) coordinate
% [pos=0.5] (b1);
% \draw (B225) .. controls (b225) and (b135) .. (B135) coordinate
% [pos=0.5] (b2);
% \draw[very thin,densely dashed] (b1) .. controls +(0,-0.2, 0) and
% +(0,0,0) .. (o);
% \draw[very thin,densely dashed] (b2) .. controls +(0,-0.2, 0) and
% +(0,0,0) .. (o);
% \draw[very thin,densely dashed] (a1) .. controls +(0,-0.2,0) and
% +(0,0,-0.2) .. (o)  .. controls +(0,0,0.20) and
% +(0,-0.2,0) .. (a2);
% \draw[very thin, densely dashed] (b1) .. controls +(0,0.2,0) and
% +(0.2,0,0) .. (o)  .. controls +(-0.20,0,0) and
% +(0,0.2,0) .. (b2);
\end{scope}

%%% Local Variables:
%%% mode: latex
%%% TeX-master: t
%%% End:
}}};
    \draw[-to] (A) --(B);
    \draw[-to] (B) --(C);
    \draw[-to] (C) --(D);
    \draw[-to] (D) --(A);
  }
\end{equation}

In this square the composition of two consecutive arrows is $0$ since
the foams obtained by stacking two consecutive morphisms do not
admit any coloring. This periodic chain complex is in general not
null-homotopic.

Relations \eqref{eq_tutte_2} and \eqref{eq_tutte_3} have the following
counterparts:
\begin{equation*} \label{eq:4periodic-complex2}
  \NB{\tikz[xscale =3, yscale =1.5, >=to]{
    \node (A) at (0,1) {$\functor[{\NB{\tikz[rotate=180]{\input{\imagesfolder/12v12}}}}]$};
    \node (B) at (1,1) {$\functor[{\NB{\tikz[rotate=180]{}}}]$};
    \node (C) at (1,0) {$\functor[{\NB{\tikz[rotate=180]{}}}]$};
    \node (D) at (0,0) {$\functor[{\NB{\tikz[rotate=-90]{\input{\imagesfolder/11v22}}}}]$}; 
\draw[-to] (A) --(B);
    \draw[-to] (B) --(C);
    \draw[-to] (C) --(D);
    \draw[-to] (D) --(A);
  }}\qquad \text{and} \qquad 
\NB{\tikz[xscale =3, yscale =1.5, >=to]{
    \node (A) at (0,1) {$\functor[{\NB{\tikz[rotate=90]{\input{\imagesfolder/12defv21}}}}]$};
    \node (B) at (1,1) {$\functor[{\NB{\tikz[xscale=-1]{\input{\imagesfolder/12defv21}}}}]$};
    \node (C) at (1,0) {$\functor[{\NB{\tikz[rotate=270]{\input{\imagesfolder/12defv21}}}}]$};
    \node (D) at (0,0) {$\functor[{\NB{\tikz[yscale=-1]{\input{\imagesfolder/12defv21}}}}]$}; 
\draw[-to] (A) --(B);
    \draw[-to] (B) --(C);
    \draw[-to] (C) --(D);
    \draw[-to] (D) --(A);
  }}
\end{equation*}

\subsection{Localization}\label{subsec_localization} 

Given a grading-preserving homomorphism $\psi:\ourring\lra S$ of commutative rings, define a foam evaluation associated to it by 
\begin{equation*}
    \functor_{\psi}(F)\ :=\ \psi(\functor(F)) \ \in S. 
\end{equation*}
Evaluation $\functor_{\psi}$ gives rise to state spaces $\functor_{\psi}(\Gamma)$ of webs and  graded $S$-module homomorphisms between state spaces $\tau_{\psi}(F):\partial_0 F{\lra}\partial_1 F$ assigned to foams $F$ with boundary. These state spaces and homomorphisms between them form a lax monoidal functor 
\begin{equation*}
    \functor_{\psi} \ : \ \catFoam \lra S\mathsf{-gmod}. 
\end{equation*}
The natural grading-preserving homomorphism 
\[\functor(\Gamma)\otimes_R S\lra \functor_{\psi}(\Gamma)
\]
of $S$-modules is an isomorphism if $\functor(\Gamma)$ is a free graded $R$-module. In particular, it is an isomorphism for all reducible webs. 

Theorem~\ref{thm_dir_sum_rels}  holds for functor $\functor_{\psi}$ in place of $\functor$. 

Assume that the commutative ring $S$ is zero in negative degrees and a characteristic two field $S_0\supset \mathbf{F}_2$ in degree $0$, so that $S=S_0\oplus S_1\oplus S_2\oplus \dots$ Then $S$ is a graded local ring and any finitely-generated graded projective module $P$ over it has a well-defined graded rank $\rank_{q,S}(P)\in \Z[q,q^{-1}]$. 

\begin{proposition}
    If $\Gamma$ is a reducible web,
    \[
     \rank_{q,S}(\tau_{\psi}\Gamma) \ = \ \rank_q(\tau(\Gamma))
    \]
    for any homomorphism $\psi:R\lra S$ into a graded local ring as above. 
\end{proposition}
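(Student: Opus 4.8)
The plan is to reduce the statement to the invariance of graded rank for reducible webs already established for $\functor$ itself, and then transport that computation across the homomorphism $\psi$. First I would recall that reducibility is defined inductively, so the natural strategy is induction on the reduction process witnessing that $\Gamma$ is reducible. The base cases are the empty web, the web $\emptyset_1$, and non-colorable webs: for these $\functor(\Gamma)$ is either the free rank-one module $R\,\emptyset_2$ or the zero module, and since $\psi$ is grading-preserving and unital, $\functor_\psi(\Gamma)$ is correspondingly $S\,\emptyset_2$ or $0$, with the same graded rank. The inductive step uses that each reduction move comes from one of the direct-sum isomorphisms of Theorem~\ref{thm_dir_sum_rels}, which (as noted in the excerpt) hold verbatim for $\functor_\psi$ in place of $\functor$.

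The cleanest route avoids re-deriving anything about $S$-ranks directly: I would invoke the base-change isomorphism already recorded in the text, namely that the natural map $\functor(\Gamma)\otimes_R S\lra \functor_\psi(\Gamma)$ is an isomorphism whenever $\functor(\Gamma)$ is a free graded $R$-module, which holds for all reducible $\Gamma$. Since $\Gamma$ is reducible, $\functor(\Gamma)$ is free of graded rank $\rank_q(\functor(\Gamma))=t_q(\Gamma)$ by the proposition preceding the definition of $t_q$. Graded rank is preserved under $\otimes_R S$ for a free module: if $\functor(\Gamma)\cong\bigoplus_i R\{d_i\}$ then $\functor(\Gamma)\otimes_R S\cong\bigoplus_i S\{d_i\}$, and because $S$ is a graded local ring (zero in negative degrees, a field $S_0$ in degree zero), the graded rank $\rank_{q,S}$ of this free $S$-module is again $\sum_i q^{d_i}=t_q(\Gamma)=\rank_q(\functor(\Gamma))$. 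This chain of equalities is exactly the claimed identity.

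Concretely I would organize the proof in two short steps: (1) observe $\functor_\psi(\Gamma)\cong\functor(\Gamma)\otimes_R S$ by the stated base-change result, valid since reducibility gives freeness of $\functor(\Gamma)$; (2) note that tensoring a graded free $R$-module with $S$ along a grading-preserving ring map yields a graded free $S$-module with the same multiset of degree shifts, so the two graded ranks coincide. Step (2) is where one must be slightly careful: one needs that a free graded $S$-module over the graded local ring $S$ has a well-defined graded rank (so that ``free of rank $n$ with shifts $d_i$'' is unambiguous), which is precisely the property guaranteed by the graded Nakayama-type hypothesis on $S$ stated just before the proposition—$S$ being local with residue field $S_0$ ensures finitely-generated graded projectives, hence free modules, have well-defined graded rank.

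The main obstacle is essentially bookkeeping rather than conceptual: one must verify that the freeness of $\functor(\Gamma)$ for reducible $\Gamma$ is genuinely available (it is, from the earlier proposition) and that the base-change isomorphism's hypothesis is met. The only genuine subtlety is confirming that graded rank is a well-defined invariant of a graded free module over $S$ and is additive over the direct-sum decompositions furnished by Theorem~\ref{thm_dir_sum_rels}; once the well-definedness over the graded local ring $S$ is in hand, everything else is a direct transport of the $R$-side computation. I do not expect any hard analysis—no genuinely new colorings or foam manipulations enter, since the entire content has been pushed into the already-proven isomorphisms and the base-change lemma.
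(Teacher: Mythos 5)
Your argument is correct and is exactly the one the paper intends: the proposition is stated without an explicit proof precisely because it follows from the two facts recorded immediately beforehand, namely that the base-change map $\functor(\Gamma)\otimes_R S\to\functor_\psi(\Gamma)$ is an isomorphism whenever $\functor(\Gamma)$ is free (hence for all reducible $\Gamma$) and that graded rank is well-defined for finitely generated graded projectives over the graded local ring $S$. Your two-step organization (freeness from reducibility, then transport of the degree shifts across $\otimes_R S$) fills in the same reasoning with no gaps.
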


Assume that some element $x$ of $S$ of positive degree $d>0$ is invertible. Then a free rank one $S$-module $S$ is isomorphic to the shifted module $S\{d\}$, and one can at most hope to define $q$-degree as an element of the quotient ring $\Z[q]/(q^d-1)$ of $\Z[q,q^{-1}]$. Note that, at least, for any commutative ring $S$, any finitely-generated free module $P$ over $S$ has a well-defined rank given by picking a maximal ideal $\mathfrak{m}\subset S$ and defining 
$\rank_S(P)=\dim_{S/\mathfrak{m}}(P/\mathfrak{m}P).$ Furthermore, if $S$ is a noetherian integral domain of finite homological dimension,  
this construction can be extended to a finitely-generated $S$-module $M$ by picking a finite length resolution $P^{\ast}(M)$ of $M$ with finitely-generated terms, tensoring all terms with the fraction field $Q(S)$ and taking the Euler characteristic of the resulting complex: 
\[
M\rightsquigarrow \chi(P^{\ast}(M)\otimes_S Q(S)). 
\]
This allows to define the corresponding quantum invariant 
\[
\chi_{\psi}(\Gamma) \ := \chi(P^{\ast}(\functor_{\psi}(\Gamma))\otimes_S Q(S)) 
\]
of webs $\Gamma$ 
for ring homomorphisms $\psi:R\lra S$ into noetherian integral domains $S$ of finite homological dimension, where $S$ does not have to be graded. If $\Gamma$ is reducible, $\chi_{\psi}(\Gamma)=t_4(\Gamma)$ for any such $\psi$. 

We now inspect how the theory behaves when inverting
\begin{equation*}
\discriminant \ = \ \prod_{1\leq i < j \leq 4} (X_i+X_j)\ = \ E_3E_2E_1+E_3^2+E_4E_1^2 \in
\ourring,
\end{equation*}
where $\delta$ is the square root of the discriminant of the polynomial $x^4+E_1 x^3 + E_2 x^2+E_3 x + E_4$ (in characteristic two square root of the discriminant is a symmetric polynomial), and $E_i$ is the $i$-th elementary symmetric function of variables $X_1,X_2,X_3,X_4$. 

Denote $\localring=\ourring[\discriminant^{-1}]$ and consider the
homomorphism $\psi_{\delta}:R\lra \localring$ as above (taking
$S=\localring$). The ring $\localring$ is a noetherian integral domain of homological dimension three. As we shall
see, over this ring we can lift Tutte-like relations of Proposition~\ref{prop_tutte} to periodic exact sequences. However, 
$\localring$ has homogeneous invertible elements of degree $2$ (for
instance, elements $X_i+X_j$), so that information about the grading
on state spaces is lost upon this localization.

Using $\discriminant^{-1}$ offers more flexibility and allows to
define new decoration called \emph{hollow dots} which will be depicted
by $\circledast$ on thin facets and by $\circ$ on thick facets.

The hollow dot $\circledast$ is defined by:
\[
  \NB{\tikz[]{\begin{scope}
  \draw (0,0) rectangle (1,1) node[pos=0.5,scale=0.7] {$\circledast$};
\end{scope}}} = \frac{1}{\discriminant}\left(
    (E_3 +
    E_2E_1)\ \NB{\tikz[]{\begin{scope}
  \draw (0,0) rectangle (1,1);% node[pos=0.5] {$\circ$};
\end{scope}}} \ +
    E_1^2\ \NB{\tikz[]{\begin{scope}
  \draw (0,0) rectangle (1,1) node[pos=0.5] {$\bullet$};
\end{scope}}} + E_1\ \NB{\tikz[]{\begin{scope}
  \draw (0,0) rectangle (1,1) node[pos=0.5] {$\bullet^2$};
\end{scope}}}\right).
\]
More concretely, a hollow dot $\circledast$ on a thin facet colored by
$i$ contributes $\frac{1}{(X_i+X_j)(X_i+X_k)(X_i+X_{\ell})}$ to the evaluation formula.

The hollow dot $\circ$ is defined by:
\begin{align}
  \NB{\tikz[scale=1.4]{\begin{scope}
  \filldraw[fill opacity=0.3, fill =gray] (0,0) rectangle (1,2)
  coordinate[pos=0.5] (a);
  \node at (0.5, 1) {$\circ$};
\end{scope}}}&=\frac{1}{\discriminant}\left(
\NB{\tikz[font=\tiny, scale=1.4]{\begin{scope}
  \filldraw[fill opacity=0.3, fill =gray, double] (0,0) rectangle (1,2);;
  coordinate[pos=0.5] (a);
  \draw[thick, orange] (0.5, 1.5) circle (0.4);
  \node[scale=0.9] at (0.5, 0.5) {$\bullet e_2^2$};
  \node[scale=0.9] at (0.5, 1.5) {$\bullet e_1$};
\end{scope}}}+
\NB{\tikz[font=\tiny, scale=1.4]{\begin{scope}
  \filldraw[fill opacity=0.3, fill =gray, double] (0,0) rectangle (1,2);;
  coordinate[pos=0.5] (a);
  \draw[thick, orange] (0.5, 1.5) circle (0.4);
  \node[scale=0.9] at (0.5, 0.5) {$\bullet e_2e_1$};
  \node[scale=0.9] at (0.5, 1.5) {$\bullet e_1^2$};
\end{scope}}}+
\NB{\tikz[font=\tiny, scale=1.4]{\begin{scope}
  \filldraw[fill opacity=0.3, fill =gray, double] (0,0) rectangle (1,2);;
  coordinate[pos=0.5] (a);
  \draw[thick, orange] (0.5, 1.5) circle (0.4);
  \node[scale=0.9] at (0.5, 0.5) {$\bullet e_1^2$};
  \node[scale=0.9] at (0.5, 1.5) {$\bullet e_2e_1$};
\end{scope}}}+
\NB{\tikz[font=\tiny, scale=1.4]{\begin{scope}
  \filldraw[fill opacity=0.3, fill =gray, double] (0,0) rectangle (1,2);;
  coordinate[pos=0.5] (a);
  \draw[thick, orange] (0.5, 1.5) circle (0.4);
  \node[scale=0.9] at (0.5, 0.5) {$\bullet e_2$};
  \node[scale=0.9] at (0.5, 1.5) {$\bullet e_1^3$};
\end{scope}}}+
\NB{\tikz[font=\tiny, scale=1.4]{\begin{scope}
  \filldraw[fill opacity=0.3, fill =gray, double] (0,0) rectangle (1,2);;
  coordinate[pos=0.5] (a);
  \draw[thick, orange] (0.5, 1.5) circle (0.4);
  \node[scale=0.9] at (0.5, 0.5) {$\bullet e_1$};
  \node[scale=0.9] at (0.5, 1.5) {$\bullet e_1^2e_1$};
\end{scope}}}+
\NB{\tikz[font=\tiny, scale=1.4]{\begin{scope}
  \filldraw[fill opacity=0.3, fill =gray, double] (0,0) rectangle (1,2);;
  coordinate[pos=0.5] (a);
  \draw[thick, orange] (0.5, 1.5) circle (0.4);
  %\node[scale=0.9] at (0.5, 0.5) {$\bullet e_1$};
  \node[scale=0.9] at (0.5, 1.5) {$\bullet e_1e_2^2$};
\end{scope}}}
  \right)
\end{align}

In other words, an hollow dot on a thick
facet colored by $\{i,j\}$, contributes, as a decoration, by 
$\frac{1}{X_i+X_j}$ to the evaluation formula.  It is also
characterized by:
\begin{equation}
  \label{eq:inverteddot}
  \NB{\tikz[]{\begin{scope}
  \filldraw[fill opacity=0.3, fill =gray] (0,0) rectangle (1,1)
  coordinate[pos=0.5] (a);
  \node at (0.5, 0.5) {$\circ\ \ \bullet$};
\end{scope}}} =   \NB{\tikz[]{\begin{scope}
  \filldraw[fill opacity=0.3, fill =gray] (0,0) rectangle (1,1)
  coordinate[pos=0.5] (a);
  %\node at (0.5, 1);
\end{scope}}} 
\end{equation}
and can be written as the inverse of $\bullet$. 

With this new decoration at hand, one can show that the 4-periodic complex
\eqref{eq:4periodic-complex} (with $\functord$ instead of $\functor$)
is exact. Indeed we have the following sections

\begin{equation*} \label{eq:4periodic-complex-sections}
  \tikz[xscale =6, yscale =3, >=to]{
    \node (A) at (0,1) {$\functord[{\NB{\tikz[]{}}}]$};
    \node (B) at (1,1) {$\functord[{\NB{\tikz[]{}}}]$};
    \node (C) at (1,0) {$\functord[{\NB{\tikz[rotate=90]{}}}]$};
    \node (D) at (0,0) {$\functord[{\NB{\tikz[rotate=90]{}}}]$}; 
    \draw[white] (A) -- (B) coordinate[yshift=1cm, pos=0.5] (ab);
    \node at (ab) {\NB{\tikz[scale=1,black]{\begin{scope}
    \foreach \x in {45, 135, 225, 315}{
\begin{scope}[zxplane=0.5, rotate=90]
    \coordinate (a\x) at (\x:0.5);
    \coordinate (A\x) at (\x:1);
  \end{scope}
  \begin{scope}[zxplane=-0.5, rotate =90]
    \coordinate (b\x) at (\x:0.5);
    \coordinate (B\x) at (\x:1);
  \end{scope}
  \draw (A\x)--(B\x);}
\node at (0,0.3, 0) {$\circ$};
\draw (A45) -- ($0.5*(a45)+0.5*(a135)$) --(A135);
\draw (A225) -- ($0.5*(a225)+0.5*(a315)$) --(A315);
\filldraw[fill = gray, fill opacity=0.3] ($0.5*(a45)+0.5*(a135)$)
.. controls +(0,-0.7,0) and +(0, -0.7, 0) ..
($0.5*(a225)+0.5*(a315)$) coordinate[pos=0.5] (o) --cycle;
\draw[double] ($0.5*(a45)+0.5*(a135)$) -- ($0.5*(a225)+0.5*(a315)$); 
% .. controls (a45) and (a135) .. (A135) coordinate
% [pos=0.5] (a1);
% \draw (A225) .. controls (a225) and (a315) .. (A315) coordinate
% [pos=0.5] (a2);
\draw (B45) .. controls (b45) and (b315) .. (B315) coordinate
[pos=0.5] (b1);
\draw (B225) .. controls (b225) and (b135) .. (B135) coordinate
[pos=0.5] (b2);
\draw[very thin,densely dashed] (b1) .. controls +(0,0.2, 0) and
+(0,0,0) .. (o);
\draw[very thin,densely dashed] (b2) .. controls +(0,0.2, 0) and
+(0,0,0) .. (o);
% \draw[very thin,densely dashed] (a1) .. controls +(0,-0.2,0) and
% +(0,0,-0.2) .. (o)  .. controls +(0,0,0.20) and
% +(0,-0.2,0) .. (a2);
% \draw[very thin, densely dashed] (b1) .. controls +(0,0.2,0) and
% +(0.2,0,0) .. (o)  .. controls +(-0.20,0,0) and
% +(0,0.2,0) .. (b2);
\end{scope}

%%% Local Variables:
%%% mode: latex
%%% TeX-master: t
%%% End:
}}};
    \draw[white] (B) -- (C) coordinate[xshift=1.5cm, pos=0.5] (bc);
    \node at (bc) {\NB{\tikz[scale=1,black]{\begin{scope}
    \foreach \x in {45, 135, 225, 315}{
\begin{scope}[zxplane=0.5, rotate=0]
    \coordinate (a\x) at (\x:0.5);
    \coordinate (A\x) at (\x:1);
  \end{scope}
  \begin{scope}[zxplane=-0.5, rotate =0]
    \coordinate (b\x) at (\x:0.5);
    \coordinate (B\x) at (\x:1);
  \end{scope}
  \draw (A\x)--(B\x);}
\node at ($0.7*(A45)+0.3*(B135)$) {$\circledast$};
\draw (A45) .. controls (a45) and (a135) .. (A135) coordinate
[pos=0.5] (a1);
\draw (A225) .. controls (a225) and (a315) .. (A315) coordinate
[pos=0.5] (a2);
\draw (B45) .. controls (b45) and (b315) .. (B315) coordinate
[pos=0.5] (b1);
\draw (B225) .. controls (b225) and (b135) .. (B135) coordinate
[pos=0.5] (b2);
\coordinate (o) at (0,0,0);
\draw[very thin, densely dashed] (a1) .. controls +(0,-0.2,0) and
+(0.2,0,0) .. (o)  .. controls +(-0.2,0,0) and
+(0,-0.2,0) .. (a2);
\draw[very thin, densely dashed] (b1) .. controls +(0,0.2,0) and
+(0,0,0.2) .. (o)  .. controls +(0,0,-0.2) and
+(0,0.2,0) .. (b2);
\end{scope}}}};
    \draw[white] (C) -- (D) coordinate[yshift=-1cm, pos=0.5] (cd);
    \node at (cd) {\NB{\tikz[scale=1,black]{\begin{scope}
    \foreach \x in {45, 135, 225, 315}{
\begin{scope}[zxplane=-0.5, rotate=0]
    \coordinate (a\x) at (\x:0.5);
    \coordinate (A\x) at (\x:1);
  \end{scope}
  \begin{scope}[zxplane=0.5, rotate =0]
    \coordinate (b\x) at (\x:0.5);
    \coordinate (B\x) at (\x:1);
  \end{scope}
  \draw (A\x)--(B\x);}
\node at (0,-0.3, 0) {$\circ$};
\draw (A45) -- ($0.5*(a45)+0.5*(a135)$) --(A135);
\draw (A225) -- ($0.5*(a225)+0.5*(a315)$) --(A315);
\filldraw[fill = gray, fill opacity=0.3] ($0.5*(a45)+0.5*(a135)$)
.. controls +(0,0.7,0) and +(0, 0.7, 0) ..
($0.5*(a225)+0.5*(a315)$) coordinate[pos=0.5] (o) --cycle;
\draw[double] ($0.5*(a45)+0.5*(a135)$) -- ($0.5*(a225)+0.5*(a315)$); 
% .. controls (a45) and (a135) .. (A135) coordinate
% [pos=0.5] (a1);
% \draw (A225) .. controls (a225) and (a315) .. (A315) coordinate
% [pos=0.5] (a2);
\draw (B45) .. controls (b45) and (b315) .. (B315) coordinate
[pos=0.5] (b1);
\draw (B225) .. controls (b225) and (b135) .. (B135) coordinate
[pos=0.5] (b2);
\draw[very thin,densely dashed] (b1) .. controls +(0,-0.2, 0) and
+(0,0,0) .. (o);
\draw[very thin,densely dashed] (b2) .. controls +(0,-0.2, 0) and
+(0,0,0) .. (o);
% \draw[very thin,densely dashed] (a1) .. controls +(0,-0.2,0) and
% +(0,0,-0.2) .. (o)  .. controls +(0,0,0.20) and
% +(0,-0.2,0) .. (a2);
% \draw[very thin, densely dashed] (b1) .. controls +(0,0.2,0) and
% +(0.2,0,0) .. (o)  .. controls +(-0.20,0,0) and
% +(0,0.2,0) .. (b2);
\end{scope}

%%% Local Variables:
%%% mode: latex
%%% TeX-master: t
%%% End:
}}};
    \draw[white] (D) -- (A) coordinate [xshift=-1.5cm, pos=0.5] (da);
    \node at (da) {\NB{\tikz[scale=1,black]{\begin{scope}
    \foreach \x in {45, 135, 225, 315}{
\begin{scope}[zxplane=0.5, rotate=0]
    \coordinate (a\x) at (\x:0.5);
    \coordinate (A\x) at (\x:1);
  \end{scope}
  \begin{scope}[zxplane=-0.5, rotate =0]
    \coordinate (b\x) at (\x:0.5);
    \coordinate (B\x) at (\x:1);
  \end{scope}
  \draw (A\x)--(B\x);}
\node at (0,0.3,0) {$\circ$};
\draw (A45) -- ($0.5*(a45)+0.5*(a135)$) --(A135);
\draw (A225) -- ($0.5*(a225)+0.5*(a315)$) --(A315);
\draw (B45) -- ($0.5*(b45)+0.5*(b315)$) --(B315);
\draw (B225) -- ($0.5*(b225)+0.5*(b135)$) --(B135);
\coordinate (o) at (0,0,0);
\filldraw[fill = gray, fill opacity=0.3] ($0.5*(a45)+0.5*(a135)$) --(o)--
($0.5*(a225)+0.5*(a315)$) -- cycle;
\filldraw[fill = gray, fill opacity=0.3] ($0.5*(b45)+0.5*(b315)$) --(o)--
($0.5*(b225)+0.5*(b135)$) -- cycle;
\draw[double] ($0.5*(a45)+0.5*(a135)$) -- ($0.5*(a225)+0.5*(a315)$);
\draw[double] ($0.5*(b45)+0.5*(b315)$) -- ($0.5*(b225)+0.5*(b135)$); 
% .. controls (a45) and (a135) .. (A135) coordinate
% [pos=0.5] (a1);
% \draw (A225) .. controls (a225) and (a315) .. (A315) coordinate
% [pos=0.5] (a2);
% \draw (B45) .. controls (b45) and (b315) .. (B315) coordinate
% [pos=0.5] (b1);
% \draw (B225) .. controls (b225) and (b135) .. (B135) coordinate
% [pos=0.5] (b2);
% \draw[very thin,densely dashed] (b1) .. controls +(0,-0.2, 0) and
% +(0,0,0) .. (o);
% \draw[very thin,densely dashed] (b2) .. controls +(0,-0.2, 0) and
% +(0,0,0) .. (o);
% \draw[very thin,densely dashed] (a1) .. controls +(0,-0.2,0) and
% +(0,0,-0.2) .. (o)  .. controls +(0,0,0.20) and
% +(0,-0.2,0) .. (a2);
% \draw[very thin, densely dashed] (b1) .. controls +(0,0.2,0) and
% +(0.2,0,0) .. (o)  .. controls +(-0.20,0,0) and
% +(0,0.2,0) .. (b2);
\end{scope}

%%% Local Variables:
%%% mode: latex
%%% TeX-master: t
%%% End:
}}};
    \draw[densely dashed, to-] (A) --(B);
    \draw[densely dashed, to-] (B) --(C);
    \draw[densely dashed, to-] (C) --(D);
    \draw[densely dashed, to-] (D) --(A);
    \path[-to] (B) edge[bend left] (A);
    \path[-to] (C) edge[bend left] (B);
    \path[-to] (D) edge[bend left] (C);
    \path[-to] (A) edge[bend left] (D);
  }
\end{equation*}

which provide the following isomorphism:
\begin{equation*}  \label{eq_delta_1_0}
 \functord[\NB{\tikz[]{}}] \oplus \functord[{\NB{\tikz[rotate=90]{}}}]  \cong 
     \functord[{\NB{\tikz[rotate=90]{}}}] \oplus \functord[\NB{\tikz[]{}}]. 
\end{equation*}

Similar foams with hollow dots, give analogous results for other
periodic complexes, so that we obtain:

\begin{proposition}\label{prop_functd} 
  The functor $\functord$ satisfies the following relations (compare to Proposition \ref{prop_tutte}):
  \begin{align}
  \label{eq_delta_1}
 \functord[\NB{\tikz[]{}}] \oplus \functord[{\NB{\tikz[rotate=90]{}}}] & \cong 
     \functord[{\NB{\tikz[rotate=90]{}}}] \oplus \functord[\NB{\tikz[]{}}], \\
     \label{eq_delta_2}
     \functord[{\NB{\tikz[rotate=180]{\input{\imagesfolder/12v12}}}}] \oplus
    \functord[{\NB{\tikz[rotate=180]{}}}] & \cong 
    \functord[{\NB{\tikz[rotate=180]{}}}] \oplus \functord[{\NB{\tikz[rotate=-90]{\input{\imagesfolder/11v22}}}}], \\
    \label{eq_delta_3}
     \functord[{\NB{\tikz[rotate=90]{\input{\imagesfolder/12defv21}}}}] \oplus \functord[{\NB{\tikz[rotate=270]{\input{\imagesfolder/12defv21}}}}] & \cong 
    \functord[{\NB{\tikz[xscale=-1]{\input{\imagesfolder/12defv21}}}}]\oplus\functord[{\NB{\tikz[yscale=-1]{\input{\imagesfolder/12defv21}}}}].\end{align}
\end{proposition}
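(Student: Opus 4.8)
The plan is to prove each of \eqref{eq_delta_1}--\eqref{eq_delta_3} by showing that the corresponding $4$-periodic complex of Subsection~\ref{subsec_periodic}, formed now with $\functord$ in place of $\functor$, is contractible over $\localring$, and then folding it into a $2$-periodic complex. Write a generic such square as
\[
A \xrightarrow{\,f\,} B \xrightarrow{\,g\,} C \xrightarrow{\,h\,} D \xrightarrow{\,k\,} A,
\]
where $A,B,C,D$ are the four localized state spaces at the corners and $f,g,h,k$ are the foam-induced maps (the unzip, saddle, zip and singular foams of \eqref{eq:4periodic-complex} for \eqref{eq_delta_1}, and their evident analogues for \eqref{eq_delta_2} and \eqref{eq_delta_3}). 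We already know $gf=hg=kh=fk=0$, since each stacked pair of foams admits no coloring. Setting $P=A\oplus C$ and $Q=B\oplus D$, the maps $\alpha(a,c)=(f a,h c)$ and $\beta(b,d)=(k d,g b)$ satisfy $\alpha\beta=0=\beta\alpha$ and reassemble the square into a $2$-periodic complex $P\xrightarrow{\alpha}Q\xrightarrow{\beta}P$. If this complex is contractible then $\alpha$ and $\beta$ split and $A\oplus C\cong\operatorname{im}\alpha\oplus\operatorname{im}\beta\cong B\oplus D$, which is exactly the desired isomorphism.

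Everything therefore reduces to producing a contracting homotopy over $\localring$. Following the hollow-dot sections drawn alongside \eqref{eq:4periodic-complex}, I would define section foams $s_1\colon B\to A$, $s_2\colon C\to B$, $s_3\colon D\to C$, $s_4\colon A\to D$ by decorating the reversed zip/saddle/unzip/singular foams with the hollow dots $\circledast$ and $\circ$. The negative degrees of the hollow dots (namely $-6$ for a thin $\circledast$ and $-2$ for a thick $\circ$, coming from inverting factors $X_i+X_j$) are tuned so that $s_1,s_3,s_4$ acquire degree $-1$ and $s_2$ degree $-3$, matching $-\deg d_i$; then the homotopy identities below are homogeneous of degree $0$. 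The content to verify is the four local identities, one at each corner,
\begin{align*}
s_1 f + k s_4 &= \id_A, & f s_1 + s_2 g &= \id_B,\\
s_3 h + g s_2 &= \id_C, & h s_3 + s_4 k &= \id_D,
\end{align*}
which say precisely that $(s_1,s_2,s_3,s_4)$ is a contracting homotopy; equivalently $s_Q\alpha+\beta s_P=\id_P$ and $\alpha s_Q+s_P\beta=\id_Q$ for the folded complex, where $s_Q(b,d)=(s_1 b,s_3 d)$ and $s_P(a,c)=(s_2 c,s_4 a)$.

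Each identity is an equality of morphisms in $\functord$, so by the universal construction it suffices to verify it as an equality of decorated foams modulo closed-foam evaluation, which in turn follows from the already established local relations: the neck-cutting and sphere/theta evaluations of Lemmas~\ref{lemma_rel_0} and~\ref{lemma_rel_1}, the defect relations of Lemma~\ref{lemma_rel_4}, and the defining property \eqref{eq:inverteddot} of the hollow dot (that $\circ$ inverts $\bullet$). Concretely, each composite $s_i d_i$ or $d_j s_j$ is a foam that neck-cuts into a sum indexed by colorings; the hollow-dot normalization forces the ``diagonal'' coloring to contribute the identity foam, while the remaining terms pair up and cancel in characteristic $2$, giving the displayed identities. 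Relation \eqref{eq_delta_1} is the case already carried out via the sections of \eqref{eq:4periodic-complex} and recorded in \eqref{eq_delta_1_0}; relations \eqref{eq_delta_2} and \eqref{eq_delta_3} follow from the same scheme with the corresponding foams.

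The main obstacle I anticipate is the bookkeeping for \eqref{eq_delta_3}: there every region is thick and the webs carry defects, so the section foams involve defect lines crossing thick facets and the relevant hollow dot is $\circ$. Verifying the four homotopy identities in that case requires combining \eqref{eq:inverteddot} with the defect-moving relations \eqref{eq_rel_A1}--\eqref{eq_rel_A2} and checking that the parity cancellation still occurs after a defect is dragged through a seam; the coloring analysis at defects (complementarity, item~\ref{it:defect-foam} of Definition~\ref{def:colfoam}) is what guarantees that the off-diagonal terms coincide and hence cancel. Once these local computations are in hand, contractibility of all three $4$-periodic complexes, and thus the isomorphisms \eqref{eq_delta_1}--\eqref{eq_delta_3}, follow uniformly from the folding argument above.
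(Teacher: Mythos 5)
Your proposal is correct and follows essentially the same route as the paper: over $\localring$ one uses the hollow-dot decorations to build section foams for each $4$-periodic complex of Subsection~\ref{subsec_periodic}, verifies the local homotopy identities by coloring analysis, and concludes that the complex is contractible, which yields the two-term direct sum isomorphisms \eqref{eq_delta_1}--\eqref{eq_delta_3}. The only difference is presentational: you make explicit the folding of the contractible $4$-periodic complex into a split $2$-periodic one, a step the paper leaves implicit when it passes from the displayed sections to \eqref{eq_delta_1_0}.
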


Denote by $t_{q=1}(\Gamma)\in \Z$ the specialization of the Laurent polynomial $t_q(\Gamma)\in \Z[q,q^{-1}]$ to $q=1$. 

\begin{theorem}
    For any web $\Gamma$ the state space $\functord(\Gamma)$ is a free $R_{\delta}$-module of rank $t_4(\Gamma)$ and 
    \[
    \rank_{R_\delta}(\functord(\Gamma)) = t_4(\Gamma)=t_{q=1}(\Gamma).
    \]
\end{theorem}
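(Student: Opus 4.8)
The plan is to prove the freeness and rank statement by induction, running in parallel with the proof of Proposition~\ref{prop:t4computable} but replacing each numerical relation by its state-space lift. The essential new ingredient is that over $\localring$ the Tutte-like relations of Proposition~\ref{prop_tutte} are no longer merely $4$-periodic complexes but genuine direct sum decompositions, namely the isomorphisms \eqref{eq_delta_1}--\eqref{eq_delta_3} of Proposition~\ref{prop_functd}; together with the decompositions of Theorem~\ref{thm_dir_sum_rels} (valid for $\functord$ as noted in Section~\ref{subsec_localization}) this supplies a categorical lift of every move used to compute $\tc$.

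First I would treat the base case of the induction on the number $k_\Gamma$ of $112$-vertices. When $k_\Gamma=0$ the web is a disjoint union of thin and thick circles, possibly carrying a defect, which is reduced by \eqref{eq_cong_circles} and \eqref{eq_liftloop_zero} to $\emptyset_1$, for which $\functord(\emptyset_1)\cong \localring$ is free of rank $1=\tc(\emptyset_1)$; since a thin (resp. thick) circle contributes the factor $[4]$ (resp. $[3][4]/[2]$), which specializes at $q=1$ to $4$ (resp. $6$), the rank matches $\tc$ in these cases. For the inductive step I would follow verbatim the reduction of Proposition~\ref{prop:t4computable}: first eliminate $222$-vertices via \eqref{eq_lifttriangle_1}, reduce to at most one defect per thick strand via \eqref{eq_liftdefects}, and perform the isomorphism moves \eqref{eq_liftquadruple_v} and \eqref{eq_delta_1}; then invoke the Euler-characteristic dichotomy to locate a curl, a digon, or a triangle, and apply \eqref{eq_liftloop_zero}, \eqref{eq_liftdigon_1}, or the chain built from \eqref{eq_lifttriangle_1}, \eqref{eq_liftdefects} and \eqref{eq_delta_2}. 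Each such move expresses $\functord(\Gamma)$ as a finite direct sum of state spaces of webs with strictly fewer $112$-vertices, with multiplicities given by quantum integers whose $q=1$ value equals the integer coefficient in the corresponding relation of Propositions~\ref{prop:relations}--\ref{prop_tutte}. A finite direct sum of free modules is free, and its rank is the sum of the ranks, so the induction yields that $\functord(\Gamma)$ is free with $\rank_{\localring}\functord(\Gamma)=\tc(\Gamma)$.

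It remains to identify $\tc(\Gamma)$ with $t_{q=1}(\Gamma)$. Here I would use that localization commutes with the universal construction: since $\discriminant$ is a nonzerodivisor in the domain $\ourring$, the localization map is injective, so the radical of the foam pairing over $\localring$ is exactly the localization of the radical over $\ourring$, giving a natural isomorphism $\functord(\Gamma)\cong \functor(\Gamma)\otimes_{\ourring}\localring$. Because $Q(\ourring)=Q(\localring)$, tensoring a finite free resolution of the finitely-generated module $\functor(\Gamma)$ with the fraction field and taking Euler characteristics shows that $t_{q=1}(\Gamma)=\dim_{Q(\ourring)}\bigl(\functor(\Gamma)\otimes_{\ourring}Q(\ourring)\bigr)$ equals the generic rank of $\functord(\Gamma)$, which by the previous paragraph is $\tc(\Gamma)$. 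This proves $\tc(\Gamma)=t_{q=1}(\Gamma)$.

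The main obstacle is bookkeeping rather than a single hard step: one must check that the reduction of Proposition~\ref{prop:t4computable} lifts \emph{faithfully}, that is, that every local move used there is genuinely a direct sum decomposition of $\localring$-state spaces and not merely a numerical identity, and in particular that the Tutte moves \eqref{eq_delta_2}--\eqref{eq_delta_3} split over $\localring$ (which is precisely where inverting $\discriminant$, through the hollow dots, is indispensable). A secondary point to verify carefully is that the quantum-integer multiplicities appearing in Theorem~\ref{thm_dir_sum_rels} specialize at $q=1$ to the integer multiplicities of Proposition~\ref{prop:relations}, so that the two inductions track the same number, and that the isomorphism $\functord(\Gamma)\cong\functor(\Gamma)\otimes_{\ourring}\localring$ indeed holds for non-reducible webs, where freeness of $\functor(\Gamma)$ over $\ourring$ is not available and one must argue through the localized radical as above.
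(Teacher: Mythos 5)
Your proposal is correct and follows essentially the same route as the paper: the paper's (very terse) proof likewise combines Proposition~\ref{prop_functd} and Theorem~\ref{thm_dir_sum_rels} to show that $\rank_{R_\delta}(\functord(\,\cdot\,))$ satisfies the same local relations as $t_4$, and then invokes Proposition~\ref{prop:t4computable} to conclude that these relations determine $t_4$. You supply more detail than the paper does --- in particular the explicit induction mirroring the proof of Proposition~\ref{prop:t4computable} and the localization argument identifying $\functord(\Gamma)$ with $\functor(\Gamma)\otimes_{\ourring}\localring$ to get $t_4(\Gamma)=t_{q=1}(\Gamma)$ --- but the underlying mechanism is identical.
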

\begin{proof}
  The first equality follows from Propositions~\ref{prop:t4computable} and
  \ref{prop_functd} and Theorem~\ref{thm_dir_sum_rels}. Proposition~\ref{prop_functd} and Theorem~\ref{thm_dir_sum_rels} imply that
  $\rank_{R_\delta}(\functord(\ \cdot\ ))$ satisfies the same
  relations as $t_4$. Propositions~\ref{prop:t4computable} implies that these relations
  characterize $t_4$ completely.
\end{proof}

Suppose now that $S\cong \kk[E_0]$, where $\deg(E_0)=2s,$ for some $s\in \{1,2,3,4\}$. 
Pick a graded ring homomorphism $\psi:R\lra \kk[E_0]$ sending generators $E_i$, $1 \le i\le 4$ to scaled powers of $E_0$ or to $0$, depending on the degree. For instance, if $s=1$, there are homomorphisms
\[ \psi(E_1)=\mu_1 E_0,\ \psi(E_2)=\mu_2 E_0^2,\ \psi(E_3)=\mu_3 E_0^3, \ \psi(E_4)=\mu_4 E_4, \ \  \mu_i\in \kk,  \]
with not all $\mu_i=0$.

\begin{proposition} If $\psi(\delta)\not=0$ then for any web $\Gamma$ 
the state space $\functor_{\psi}(\Gamma)$ is a free graded $\kk[E_0]$-module of (ungraded) rank $t_4(\Gamma).$
\end{proposition}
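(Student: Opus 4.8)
Since $\kk$ is a field, $S=\kk[E_0]$ is a graded principal ideal domain whose only homogeneous prime is $(E_0)$; hence a finitely generated graded $S$-module is \emph{graded free} as soon as it is torsion free, and its rank can be read off after inverting one nonzero element. So the plan is to prove three things about $\functor_{\psi}(\Gamma)$: that it is finitely generated over $S$, that it is torsion free (hence free), and that its rank equals $\tc(\Gamma)$. The hypothesis $\psi(\delta)\neq 0$ will be used only in the last, rank-computing step, to transfer the already-established freeness of $\functord(\Gamma)$ over $\localring$.

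\textbf{Finite generation and torsion freeness.} First I would note that the natural surjection $\functor(\Gamma)\otimes_R S \twoheadrightarrow \functor_{\psi}(\Gamma)$ is well defined: a foam relation $\sum_i a_i\functor[U_i]=0$ in $\functor(\Gamma)$, detected by $\sum_i a_i\eval[VU_i]=0$ in $R$ for every closing foam $V$, maps under $\psi$ to $\sum_i \psi(a_i)\functor_{\psi}[VU_i]=0$ in $S$, which is exactly the relation $\sum_i\psi(a_i)\functor_{\psi}[U_i]=0$. As $\functor(\Gamma)$ is finitely generated over $R$, the images of its generators generate $\functor_{\psi}(\Gamma)$ over $S$. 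The torsion-freeness is the clean step and uses only that $S$ is a domain: an element $m=\sum_i a_i\functor_{\psi}[U_i]$ is zero in $\functor_{\psi}(\Gamma)$ precisely when its pairing $\langle m,V\rangle:=\sum_i a_i\functor_{\psi}[VU_i]\in S$ with every closing foam $V$ vanishes. If $rm=0$ for some nonzero $r\in S$, then $r\langle m,V\rangle=0$ for all $V$, and since $S$ is a domain this forces $\langle m,V\rangle=0$ for all $V$, i.e.\ $m=0$. Thus $\functor_{\psi}(\Gamma)$ is torsion free, and being finitely generated over the PID $S$, it is a free graded $S$-module.

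\textbf{Rank.} It remains to compute the rank, which equals $\dim_{Q(S)}\bigl(\functor_{\psi}(\Gamma)\otimes_S Q(S)\bigr)$ for $Q(S)$ the fraction field of $S$. Here $\psi(\delta)\neq 0$ enters: it is a nonzero element of the domain $S$, so the composite $\bar\psi\colon R\to Q(S)$ sends $\delta$ to a unit and therefore factors through $\localring=R_{\delta}$, giving $\bar\psi\colon\localring\to Q(S)$. Using that forming the state space commutes with the flat localizations $S\to S[\psi(\delta)^{-1}]\hookrightarrow Q(S)$, one obtains $\functor_{\psi}(\Gamma)\otimes_S Q(S)\cong \functor_{\bar\psi}(\Gamma)$; and since $\functord(\Gamma)$ is a free $\localring$-module of rank $\tc(\Gamma)$ by the preceding theorem (which in turn rests on Theorem~\ref{thm_dir_sum_rels}, Proposition~\ref{prop_functd} and Proposition~\ref{prop:t4computable}), base change along $\localring\to Q(S)$ gives $\functor_{\bar\psi}(\Gamma)\cong\functord(\Gamma)\otimes_{\localring}Q(S)$, a $Q(S)$-vector space of dimension $\tc(\Gamma)$. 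Hence $\rank_S\functor_{\psi}(\Gamma)=\tc(\Gamma)$.

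\textbf{Main obstacle.} The delicate point is the base-change compatibility invoked in the last step, namely that the universal construction commutes with localizing the ground ring at $\psi(\delta)$. I would justify it by presenting $\functor_{\psi}(\Gamma)$ as the image of the pairing map into a product of copies of $S$ indexed by closing foams, observing that finitely many closing foams already detect all relations (because $\functor_{\psi}(\Gamma)$ is finitely generated and $S$ is Noetherian), and then appealing to exactness of localization. As a sanity check on the hypothesis, note $\deg\delta=12$ while $\deg E_0=2s$, so for $s=4$ the homogeneous degree-$12$ part of $S$ vanishes and $\psi(\delta)\neq 0$ is impossible, making the statement vacuous there; for $s\in\{1,2,3\}$ one has $\psi(\delta)=cE_0^{6/s}$ with $c\in\kk$, so inverting $\psi(\delta)$ coincides with inverting $E_0$ and $Q(S)$ is unchanged.
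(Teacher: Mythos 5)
The paper gives no argument here beyond the citation to Propositions~4.5 and 4.18 of [KhoRozI], so your self-contained route is genuinely different, and most of it is sound. The freeness step is correct and clean: over any integral domain the universal construction produces torsion-free state spaces, because an element is zero exactly when all of its pairings (which take values in $S$) vanish, and a finitely generated torsion-free graded module over the graded PID $\kk[E_0]$ is graded free. Your closing sanity check is also right: the hypothesis is vacuous for $s=4$, and for $s\in\{1,2,3\}$ inverting $\psi(\delta)$ is the same as inverting $E_0$.

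The weak point is in the rank computation, and it is not quite where you locate it. The base change $\functor_{\psi}(\Gamma)\otimes_S Q(S)\cong\functor_{\bar\psi}(\Gamma)$ is the easy one: all pairing values lie in $S$ and $S\hookrightarrow Q(S)$, so injectivity follows by the same argument as torsion-freeness; by contrast, your proposed justification via ``finitely many closing foams detect all relations by Noetherianity'' does not work as stated (decreasing chains of submodules of a Noetherian module need not stabilize), though you do not need it. The genuinely delicate step is the isomorphism $\functor_{\bar\psi}(\Gamma)\cong\functord(\Gamma)\otimes_{\localring}Q(S)$. The map $\localring\to Q(S)$ is very far from injective, so the statement that relations over $Q(S)$ are induced from relations over $\localring$ cannot be read off from pairing values, and abstract freeness of $\functord(\Gamma)$ alone does not yield the isomorphism. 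What is needed is that the freeness is witnessed by a basis of foam classes $U_i$ together with dual closing foams $V_j$ satisfying $\eval[V_jU_i]=\delta_{ij}$ over $\localring$, since such identities survive any specialization; these dual bases do exist, being produced by unwinding the inductive proof of the preceding theorem through the explicit $\iota_i,p_i$ foams of Theorem~\ref{thm_dir_sum_rels} and the hollow-dot foams of Proposition~\ref{prop_functd}, but this must be said. The cleaner fix is to bypass $\localring$ entirely: since $\psi(\delta)$ is invertible in $Q(S)$, the hollow-dot decorations and all decomposition foams make sense for $\functor_{\bar\psi}$, so the proof of the preceding theorem runs verbatim with $Q(S)$ in place of $\localring$ and gives $\dim_{Q(S)}\functor_{\bar\psi}(\Gamma)=t_4(\Gamma)$ directly, after which your localization argument pins down the rank of the free module $\functor_{\psi}(\Gamma)$.
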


The proof is identical to that of Propositions~4.5 and 4.18 in~\cite{KhoRozI}. $\square$

\vspace{0.07in} 

We do not know how to compute the graded rank of $\functor_{\psi}(\Gamma)$ when $\Gamma$ is not reducible and whether for some nonreducible graphs the rank may depend on $\psi$.  

If $\psi(\delta)=0$, at most we can derive is the inequality $\rank(\functor_{\psi}(\Gamma))\le t_4(\Gamma)$ (compare with~\cite[Corollary 2.11]{Boozer1}). 

\vspace{0.07in}

Likewise, assume given a homomorphism $\psi:R\lra \kk$ into a field $\kk$ and ignore the grading of $R$. 
\begin{itemize}
    \item 
If $\psi(\delta)\not=0\in\kk$ then the theory can be fully understood, via Proposition~\ref{prop_functd}, with $\kk$ in place of $R_{\delta}$. $\kk$-vector space $\functor_{\psi}(\Gamma)$ is not graded and $\dim(\functor_{\psi}(\Gamma))=t_4(\Gamma).$ 
\item If $\psi(\delta)=0\in\kk$ then $\functor_{\psi}(\Gamma)$ is a finite-dimensional graded vector space and $\dim(\functor_{\psi}(\Gamma))\le t_4(\Gamma)$, with equality at least for reducible $\Gamma$ (c.f.~\cite[Corollary 2.11]{Boozer1}). 
\end{itemize}

\bibliographystyle{alphaurl}
\bibliography{ref.bib}

\end{document}